\numberwithin{equation}{section}
\newtheorem{thm}{Theorem}[section]
\newtheorem{lemm}{Lemma}[section]
\newtheorem{corr}{Corollary}[section]
\newtheorem{rmrk}{Remark}[section]
\newtheorem{assum}{Assumption}
\tikzset{every label/.style={font=\footnotesize,inner sep=1pt}}
\newcommand{\R}{{\mathbb R}}
\newcommand{\Rd}{\R^d}
\newcommand{\EE}{\mathbb{E}}
\newcommand{\RR}{\mathbb{R}}
\newcommand{\NN}{\mathbb{N}}
\newcommand{\ZZ}{\mathbb{Z}}
\newcommand{\CalC}{{\mathcal{C}}}
\newcommand{\CalB}{{\mathcal{B}}}
\newcommand{\CalE}{{\mathcal{E}}}
\newcommand{\CalF}{{\mathcal{F}}}
\newcommand{\CalK}{{\mathcal{K}}}
\newcommand{\CalP}{{\mathcal{P}}}
\newcommand{\CalS}{{\mathcal{S}}}
\newcommand{\CalV}{{\mathcal{V}}}
\newcommand{\CalW}{{\mathcal{W}}}
\newcommand{\munu}{\mu^{\nu}_t}
\newcommand{\rhonu}{\rho^\nu_t}
\newcommand{\critrho}{\overline{\rho}}
\newcommand{\gmin}{\overline{\rho}}
\newcommand{\fxpt}{{\tau}}
\newcommand{\fxptmap}{{T}}
\newcommand{\ballp}[3]{B_{{#1}}\left({#2},{#3}\right)} 
\newcommand{\com}{\CalC}
\newcommand{\rhofp}{\gmin_{_{FP}}}
\newcommand{\sci}[2]{{#1}\text{e}{-#2}}
\newcommand{\bcerr}{{E_0}}
\newcommand{\nrm}[1]{\left\Vert {#1} \right\Vert}
\newcommand{\dd}{\,d}
\newcommand{\ppt}{\frac{\partial}{\partial t}}
\newcommand{\supp}[1]{\text{supp}\left(#1\right)}
\newcommand{\dist}[2]{\text{dist}\left(#1,#2\right)}
\newcommand{\ind}[1]{\mathbbm{1}_{#1}}	
\newcommand{\eff}{{f}_{\scalebox{.9}{$\scriptscriptstyle D$}}}
\newcommand{\txt}[2]{\hspace{#1cm} \text{#2} \hspace{#1cm}} 
\renewcommand{\tilde}[1]{\widetilde{#1}}
\begin{document}


%
%

\title{Equilibria of an aggregation model with linear diffusion in domains with boundaries}

\author{Daniel A. Messenger}
\address{Department of Mathematics, Simon Fraser University, 8888 University Dr., Burnaby, BC V5A 1S6, Canada.}
\email{daniel.messenger@colorado.edu\footnote{D.M.'s current affiliation: Department of Applied Mathematics, University of Colorado Boulder, 11 Engineering Dr., Boulder, CO 80309, USA.}} 

\author{Razvan C. Fetecau}  
\address{Department of Mathematics, Simon Fraser University, 8888 University Dr., Burnaby, BC V5A 1S6, Canada.}
\email{van@math.sfu.ca}

\maketitle


\begin{abstract}
We investigate the effect of linear diffusion and interactions with the domain boundary on swarm equilibria by analyzing critical points of the associated energy functional. Through this process we uncover two properties of energy minimization that depend explicitly on the spatial domain: (i) unboundedness from below of the energy due to an imbalance between diffusive and aggregative forces depends explicitly on a certain volume filling property of the domain, and (ii) metastable mass translation occurs in domains without sufficient symmetry. From the first property, we present a sharp condition for existence (resp. non-existence) of global minimizers in a large class of domains, analogous to results in free space, and from the second property, we identify that external forces are necessary to confine the swarm and grant existence of global minimizers in general domains. We also introduce a numerical method for computing critical points of the energy and give examples to motivate further research.
\end{abstract}

{\small {\bf Keywords: }Nonlocal modeling, swarm equilibria, domains with boundaries, energy minimizers, metastability, Wasserstein metric}



\section{Introduction}
\label{sect:intro}

We consider minimizers of the following nonlocal and non-convex energy functional: 
\begin{align}
\label{eqn:energy}
\CalE^\nu[\mu] &= \frac{1}{2}\int_D\int_D K(x-y)\,d\mu(x)\,d\mu(y)+\nu\int_D\log(\rho(x))\,d\mu(x)+\int_DV(x)\,d\mu(x),
\vspace{0.5cm} 
\end{align}
for measures $\mu$ that are absolutely continuous with respect to the Lebesgue measure ($\rho$ denotes the density of $\mu$) and for general domains $D\subset \R^d$ with smooth boundary. Here, $K$ and $V$ represent interaction and external potentials, respectively, and $\nu>0$ is the diffusion parameter.

Minimizers of the energy $\CalE^\nu$ relate to equilibria of the aggregation model with linear diffusion,
\begin{equation}\label{aggdD}
	\begin{dcases} 
		\ppt \munu + \nabla \cdot \left(\munu \,v^\nu \right) = \nu\Delta \munu, & x\in D\\[8pt]
\Big\langle n(x),\, \rhonu(x) \, v^\nu(x)+\nu\nabla \rhonu(x)\Big\rangle = 0, & x\in \partial D\\[8pt] 
		\munu\big\vert_{t=0} = \mu^\nu_0 \in \CalP_2(D),
	\end{dcases}\\[2pt]
\end{equation}
where $\rhonu$ is the density of $\munu$, $v^\nu$ denotes the swarm velocity
\begin{equation}\label{aggdvelocity}
v^\nu(x) := -\int_D \nabla K(x-y)\dd\munu(y)-\nabla V(x),
\end{equation}
and $n(x)$ denotes the unit outward normal to $\partial D$ at $x$. Specifically, weak-measure solutions of \eqref{aggdD} are 2-Wasserstein gradient flows of $\CalE^\nu$ on the space $\CalP_2(D)$ of probability measures with finite second moment \cite{ambrosio2008gradient,CaMcVi2006}, such that steady states of \eqref{aggdD} correspond to critical points of $\CalE^\nu$. 

Aggregation-diffusion models of type \eqref{aggdD} and their associated energies of the form \eqref{eqn:energy} (with or without diffusion) appear in many phenomena, including  biological swarms \cite{MoKe1999,LeToBe2009},  granular media \cite{carrillo2003kinetic}, self-assembly of nanoparticles \cite{HoPu2006} and opinion dynamics \cite{MotschTadmor2014}. In such applications the linear diffusion can model anti-crowding, locally repulsive interactions, or it can be the result of Brownian noise included in the model. On the other hand, the potential $K$ models nonlocal social interactions such as attraction and repulsion between the members of a group (e.g., individuals of a biological swarm). There has been extensive research on various aspects of this model,  with the vast majority of these works being concerned with the model set up in free space ($D=\Rd$). We will briefly review below some of this literature.

In the absence of diffusion, the study of the evolution model in free space has been a very active area of research recently \cite{BertozziCarilloLaurent,BertozziLaurent,LeToBe2009,FeRa10,carrillo2011global}. The behaviour of its solutions relies fundamentally on the nature and properties of the interaction potential $K$. Consequently, attractive potentials lead to (finite or infinite-time) blow-up \cite{BertozziCarilloLaurent,HuBe2010}, while balancing attraction and repulsion can generate finite-size, confined aggregations \cite{FeHuKo11,LeToBe2009}. The model with diffusion has an extensive literature of its own; we refer here to \cite{carrillo2003kinetic,CaMcVi2006} for comprehensive studies on well-posedness of solutions to this model using the theory of gradient flows in probability spaces \cite{ambrosio2008gradient}, and to \cite{HoPu2006,HuFe2013} for studies on equilibria of the diffusive model with applications to aggregation/collective behaviour.

Critical points of the interaction energy \eqref{eqn:energy} (or equivalently, equilibria of the dynamic model) have been studied in various papers recently. For the case $\nu =0$, existence of global minimizers has been established in \cite{CaChHu2014,CaCaPa2015,ChFeTo2015,SiSlTo2015}, while qualitative properties such as dimensionality, size of the support, symmetry and stability have been investigated in \cite{balague2013dimensionality,BaCaLaRa2013,LeToBe2009,FeHuKo11,FeHu13}. A provoking gallery of such minimizers is presented for instance in \cite{KoSuUmBe2011}; it contains aggregations on disks, annuli, rings, soccer balls, and others. In the presence of diffusion ($\nu>0$), the focus is the competition between local repulsion effects (diffusion) and nonlocal attractive interactions that provides existence (or lack of) energy minimizers. This delicate balance of such forces was recently investigated by Carillo et al. in \cite{carrillo2018existence} for the case of free space; this work is central to our paper and we will return to it frequently throughout.  

Our main interest in this paper lies in domains with boundaries, which are very relevant to many realistic physical settings (e.g., the boundary may be an obstacle in the environment, such as a river or the ground; the latter arises for instance in the locust model from \cite{ToDoKeBe2012}). Equilibria for the aggregation model without diffusion in domains with boundaries have been studied in \cite{bernoff2011primer}, while the well-posedness of its solutions (in the probability measure space) has been established in \cite{carrillo2014nonlocal,wu2015nonlocal}. Also, in a recent study \cite{fetecau2017swarm}, the authors identified a flaw of the aggregation equation with zero diffusion in domains with boundaries: its solutions can evolve into unstable equilibria. This is a surprising degeneracy of model \eqref{aggdD} without diffusion, given that it has a gradient flow formulation. From this perspective, adding diffusion can be seen as a regularizing mechanism  \cite{evers2016metastable,zhang2017continuity,fetecau2018zero}.

We also mention that there has been extensive work on aggregation models with repulsive effects modelled by nonlinear diffusion (see for instance \cite{BeRoBe2011,CaHiVoYa2019} and references therein). The two modes of diffusion (linear vs. nonlinear) result in different features of equilibria/minimizers of the associated energy. In particular, nonlinear diffusion models admit compactly supported equilibria\cite{BurgerDiFrancescoFranek,BuFeHu14,CaHiVoYa2019}, in contrast with equilibria for linear diffusion which can only have full support within the domain (see Section \ref{sect:critical-points} of the present paper). Another important distinction is that the linear diffusion model has an underlying particle system associated to it (modelled by Brownian motion), while nonlinear diffusion does not, making the former more flexible in terms of numerical simulations \cite{fetecau2018zero}. 

We discuss now the organization and the contributions of the present paper. Notation and assumptions are introduced in Section \ref{sect:prelim}. In Section \ref{sect:critical-points} we establish some useful properties of critical points of the energy $\CalE^\nu$, including a new energy argument showing that minimizers $\gmin$ satisfy $\supp{\gmin}=D$. Section \ref{sect:free-space} contains a brief review of various results in free space established in \cite{carrillo2018existence}, while Sections \ref{sect:non-exist} and \ref{sect:existence} present our results on energy minimization in domains with boundaries. 


In Section \ref{sect:non-exist}, we identify two primary phenomena which lead to non-existence of global minimizers of $\CalE^\nu$ and depend on properties of the spatial domain. The first non-existence phenomenon (discussed in Section \ref{subsect:imbalance}) involves an imbalance of diffusive and aggregative forces and is analogous to non-existence results for free space found in \cite{carrillo2018existence}. An interesting novelty of our study, however, is that we find that non-existence as a result of diffusion-dominated spreading depends on the \textit{effective volume dimension} of the domain (defined in Section \ref{sect:prelim}) which informally describes the number of orthogonal directions in the domain which independently extend to infinity.

The second non-existence phenomenon (presented in Section \ref{subsect:escape}) is a result of asymmetries within $D$ and involves metastable translation of the swarm under diffusion-mediated repulsion from the boundary $\partial D$. This phenomenon rules out the existence of minimizers in the absence of an external potential ($V=0$) for large classes of unbounded domains. Specifically, the necessary condition in Theorem \ref{domassymthm} for critical points implies that if the domain is not suitably symmetric, arbitrarily large attraction at infinity (e.g., an attractive potential such as $K(x) = |x|^p/p$ with $p$ large) cannot contain the swarm regardless of how small the diffusion. Non-existence of this type necessitates the use of an external potential $V$ to confine the swarm, in contrast to results in free space, where existence of minimizers for $V=0$ is guaranteed for sufficiently strong attraction at infinity \cite{carrillo2018existence}. 

Section \ref{sect:existence} is devoted to the existence of global minimizers in light of the phenomena discussed in Section \ref{sect:non-exist}. Theorem \ref{tightbound} establishes a sharp condition for existence in domains of type $D = F\times \RR^{d-m}$ with an effective volume dimension of $d-m$, where $F\subset \RR^m$ is a compact $m$-dimensional set with $0<m<d$. This result serves as a generalization of the sharp existence condition in free space established in \cite{carrillo2018existence}. Theorems \ref{existence1} and \ref{existence2} then provide sufficient conditions for existence in general domains with boundaries, which involve establishing a minimal set of requirements on $V$ for confinement, in light of the metastable translations discussed in Section \ref{subsect:escape} that occur when $V=0$. 

Finally, in Section \ref{sect:numerics} we discuss some of the findings in the previous sections through numerical examples of critical points computed using a fixed-point iteration scheme with relaxation.  We illustrate both purely attractive and attractive-repulsive interaction potentials, with the latter featuring non-uniqueness of critical points.


\section{Preliminaries and Assumptions}
\label{sect:prelim}

In this section we provide some preliminaries and background, as well as list the assumptions we make on the potentials and the domain.
\subsection*{Notations}
Let $\CalB^d$ denote the Borel $\sigma$-algebra on $\Rd$.
For $A\in \CalB^d$, $|A|$ denotes the volume of $A$ (with respect to the Lebesgue measure) and $\ind{A}$ represents the indicator function of $A$. Let $B_R(x)$ denote the $d$-dimensional Euclidean ball of radius $R$ centred at $x\in\Rd$. For $D\in \CalB^d$, let $\CalP(D)$ denote the set of Borel probability measures on $D$ and $\CalP^{ac}(D)\subset\CalP(D)$ the set of absolutely continuous measures with respect to the Lebesgue measure.
%
%
%
For $\mu\in \CalP(D)$, $x\in \Rd$ and $\gamma\in \RR$, define the $\gamma^{th}$ moment of $\mu$ centred at $x$ as
\begin{equation}\label{moment}
M^x_\gamma(\mu) := \int_D|y-x|^\gamma \dd\mu(y),
\end{equation}
with 
\begin{equation}
\label{moment0}
M_\gamma(\mu):=M^0_\gamma(\mu),
\end{equation}
and the centre of mass of $\mu$ by 
\begin{equation}\label{centreofmass}
\com\left(\mu\right):=\int_D x\,d\mu(x).
\end{equation}
\begin{rmrk}
\normalfont
Throughout, we will often refer to an absolutely continuous measure directly by its density $\rho$, and by abuse of notation sometimes write $\rho\in \CalP^{ac}(D)$ to mean $d\rho(x) = \rho(x)\,dx$. 
\end{rmrk}
\subsection*{Weak-* Relative Compactness and Tightness}
\hspace{5mm} We say that a sequence $\left\{\mu_n\right\}_{n\geq 0} \subset \CalP(D)$ converges weakly-* to $\mu\in \CalP(D)$ and write $\mu_n\overset{*}{\rightharpoonup} \mu$ if for every bounded continuous function $f:D\to \RR$ we have 
\[\lim_{n\to \infty}\int_Df(x)\,d\mu_n(x)  = \int_Df(x)\,d\mu(x).\]
A collection of measures $\CalF\subset \CalP(D)$ is said to be weakly-* relatively compact if for every sequence $\left\{\mu_n\right\}_{n\geq 0} \subset \CalF$ there exists a subsequence $\left\{\mu_{n_k}\right\}_{k\geq 0}$ which converges weakly-* to some $\mu\in \CalP(D)$. A collection of measures $\CalF\subset \CalP(D)$ is said to be \textit{tight} if for every $\epsilon>0$ there exists a compact set $K_\epsilon\subset D$ such that 
\[\mu(K_\epsilon^c)<\epsilon\] 
for every $\mu\in \CalF$, where $K_\epsilon^c$ denotes the complement of $K_\epsilon$ within $D$ (i.e. $K_\epsilon^c = D\setminus K_\epsilon$). Recall that weak-* relative compactness and tightness are related by Prokhorov's Theorem:
\begin{lemm}{(Prokhorov's theorem \cite[Chapter 1, Section 5]{billingsley2013convergence})}
A collection of measures $\CalF\subset \CalP(D)$ is weakly-* relatively compact if and only if it is tight.
\end{lemm}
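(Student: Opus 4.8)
The plan is to prove the two implications of Prokhorov's theorem separately. The direction that is actually needed for our existence arguments—and the easier of the two—is that tightness implies weak-* relative compactness; the converse requires the completeness of $D$ as a metric space and I would treat it second. Throughout I will use that $D\subset\Rd$ is separable, so that $C_c(D)$, the continuous compactly supported functions, is separable in the supremum norm, and that $D$ is locally compact, so the Riesz representation theorem applies.

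For the forward direction, suppose $\CalF$ is tight and let $\{\mu_n\}_{n\geq 0}\subset\CalF$. Fix a countable dense subset $\{f_j\}_{j\geq 0}$ of $C_c(D)$. For each $j$ the real sequence $\left(\int_D f_j \dd\mu_n\right)_n$ is bounded by $\|f_j\|_\infty$, so a Cantor diagonal argument extracts a single subsequence $\{\mu_{n_k}\}_k$ along which $\int_D f_j \dd\mu_{n_k}$ converges for every $j$. Density in $C_c(D)$ upgrades this to convergence of $\int_D f \dd\mu_{n_k}$ to a limit $L(f)$ for every $f\in C_c(D)$, and $L$ is a positive linear functional with $|L(f)|\leq\|f\|_\infty$. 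By Riesz representation, $L(f)=\int_D f\dd\mu$ for a unique finite Borel measure $\mu$ on $D$ with $\mu(D)\leq 1$. Tightness is exactly what prevents loss of mass: given $\epsilon>0$, take the compact $K_\epsilon$ from the definition of tightness and $f\in C_c(D)$ with $\ind{K_\epsilon}\leq f\leq 1$, so that $\mu(D)\geq\int_D f\dd\mu=L(f)=\lim_k\int_D f\dd\mu_{n_k}\geq\liminf_k \mu_{n_k}(K_\epsilon)\geq 1-\epsilon$; hence $\mu(D)=1$ and $\mu\in\CalP(D)$. A second application of tightness promotes the convergence from $C_c(D)$ to all bounded continuous $f$—splitting $f$ into a compactly supported piece agreeing with $f$ on $K_\epsilon$ and a remainder carrying mass at most $\epsilon$—which yields $\mu_{n_k}\overset{*}{\rightharpoonup}\mu$.

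For the converse, suppose $\CalF$ is weakly-* relatively compact; I would argue by contradiction through total boundedness. The key intermediate claim is that for every $\delta>0$ and $\epsilon>0$ there exist finitely many open balls of radius $\delta$ whose union $G$ satisfies $\mu(G)>1-\epsilon$ for all $\mu\in\CalF$ simultaneously. If this failed, use a countable cover $\{B_i\}_{i\geq 1}$ of $D$ by $\delta$-balls (separability) and set $G_m=\bigcup_{i=1}^m B_i$; by assumption there is $\mu_m\in\CalF$ with $\mu_m(G_m)\leq 1-\epsilon$. Passing to a weak-* convergent subsequence $\mu_{m_k}\overset{*}{\rightharpoonup}\mu$ and using the lower semicontinuity $\mu(G)\leq\liminf_k\mu_{m_k}(G)$ for open $G$, one gets $\mu(G_m)\leq 1-\epsilon$ for every fixed $m$, and letting $G_m\uparrow D$ forces $\mu(D)\leq 1-\epsilon<1$, contradicting $\mu\in\CalP(D)$. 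Granting the claim, for each $k$ choose a finite union $A_k$ of $(1/k)$-balls with $\mu(A_k)>1-\epsilon\,2^{-k}$ for all $\mu\in\CalF$, and set $K:=\overline{\bigcap_k A_k}$. This set is closed and totally bounded, hence compact because $D$ is complete, and $\mu(K^c)\leq\sum_k\mu(A_k^c)<\epsilon$ for every $\mu\in\CalF$, which establishes tightness.

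The main obstacle is the converse direction, and specifically the uniform total-boundedness claim: this is where mere sequential weak-* compactness must be converted into control, uniform over all of $\CalF$, on the spatial concentration of mass, and where completeness of $D$ is essential to pass from total boundedness to genuine compactness of $K$. For a general Borel set $D\subset\Rd$ completeness in the induced Euclidean metric can fail, so in our setting I would either restrict to $D$ closed (replacing $D$ by $\overline{D}$, which is natural for a domain with smooth boundary), or invoke that any Borel subset of a Polish space is itself Polish under an equivalent metric; the forward direction, which is the one we use, needs no such care.
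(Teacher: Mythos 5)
The paper does not prove this lemma at all—it is quoted as a classical result with a citation to Billingsley—so there is no in-paper argument to compare against. Your proof is the standard textbook argument (essentially the one in the cited reference): a diagonal/Riesz-representation extraction for the forward direction with tightness preventing mass loss, and the uniform total-boundedness contradiction via the portmanteau inequality for the converse; both directions are correct, and you rightly note that only the forward implication is used in the existence arguments and that the completeness needed in the converse is supplied here by Assumption 2(i), which takes $D$ closed in $\Rd$.
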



\subsection*{The $p$-Wasserstein Space}

\noindent For $p\in [1,\infty)$, define the space  
\begin{equation}\label{p1d}
\mathcal{P}_p(D):=\left\{\mu\in\mathcal{P}(D):~M_p(\mu)<+\infty\right\},
\end{equation}
where $M_p(\mu)$ is defined \eqref{moment0}. The $p$-Wasserstein distance on $\mathcal{P}_p(D)$ is then
\begin{equation}\label{wass}
\CalW_p(\mu,\eta)=\left(\inf_{\pi\in\Lambda(\mu,~\eta)}\Big\{\int_{D\times D}|x-y|^p\dd\pi(x,y)\Big\}\right)^{\frac{1}{p}}=\left(\inf_{X\sim \mu,Y\sim \eta}\Big\{\EE[|X-Y|^p]\Big\}\right)^{\frac{1}{p}},
\end{equation}
where $\Lambda(\mu,~\eta)$ is the set of joint probability measures on ${D}\times{D}$ with marginals $\mu$ and $\eta$, also known as \textit{transport plans}, and $(X,Y)$ ranges over all possible couplings of random variables $X$ and $Y$ with laws $\mu$ and $\eta$, respectively.

Recall that for each $p\in[1,\infty)$ the metric space $(\CalP_p(D),\CalW_p)$ is complete and convergence in $(\CalP_p(D),\CalW_p)$ is equivalent to weak-* convergence of measures. We also have the following useful upper bound on $\CalW_p(\mu,\eta)$ in terms of the total variation measure $d|\mu-\eta|$:
\begin{lemm}\label{ELlemm2}
\cite[Ch. 6]{villani2008optimal}
For all $p\in [1,\infty)$, any $x_0 \in D$ and $\mu,\eta \in \CalP_p(D)$,
\[\CalW_p^p(\mu,\eta)\leq 2^{(p-1)}\int_D|x-x_0|^pd|\mu-\eta|(x).\]
\end{lemm}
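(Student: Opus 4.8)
The plan is to exploit the fact that $\CalW_p^p(\mu,\eta)$ is an infimum over the set $\Lambda(\mu,\eta)$ of transport plans, so that the cost of \emph{any} admissible plan furnishes an upper bound. I would therefore construct one explicit coupling of $\mu$ and $\eta$ and estimate its transport cost directly, rather than attempting to analyze the infimum.

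First I would invoke the Hahn--Jordan decomposition of the signed measure $\mu-\eta$, writing $\mu-\eta = (\mu-\eta)_+ - (\mu-\eta)_-$ with $(\mu-\eta)_\pm$ nonnegative and mutually singular, so that $|\mu-\eta| = (\mu-\eta)_+ + (\mu-\eta)_-$. Since $\mu$ and $\eta$ are probability measures, the identity $(\mu-\eta)(D)=0$ forces the two pieces to have equal total mass, $(\mu-\eta)_+(D) = (\mu-\eta)_-(D) =: m$. If $m=0$ then $\mu=\eta$ and both sides of the inequality vanish, so I may assume $m>0$. Writing $\sigma := \mu\wedge\eta$ for the common mass, one checks that $\mu = \sigma + (\mu-\eta)_+$ and $\eta = \sigma + (\mu-\eta)_-$.

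Next I would define the candidate plan that keeps the common mass in place and couples the two excess pieces independently:
\[\pi := (\mathrm{Id},\mathrm{Id})_\#\sigma \;+\; \frac{1}{m}\,(\mu-\eta)_+ \otimes (\mu-\eta)_-.\]
A direct marginal computation shows the first marginal of $\pi$ is $\sigma + (\mu-\eta)_+ = \mu$ and the second is $\sigma + (\mu-\eta)_- = \eta$, so that $\pi \in \Lambda(\mu,\eta)$. The diagonal term contributes nothing to the transport cost since $|x-x|^p=0$, leaving
\[\CalW_p^p(\mu,\eta) \;\le\; \frac{1}{m}\int_D\int_D |x-y|^p\,d(\mu-\eta)_+(x)\,d(\mu-\eta)_-(y).\]

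Finally I would insert the reference point $x_0$ and apply the elementary convexity bound $(a+b)^p \le 2^{p-1}(a^p+b^p)$, valid for $p\ge 1$, to $|x-y|^p \le (|x-x_0|+|x_0-y|)^p$. Splitting the resulting double integral into two terms and using that each excess measure has total mass $m$ (which cancels the prefactor $1/m$) recombines $(\mu-\eta)_+ + (\mu-\eta)_-$ into $|\mu-\eta|$ and yields the claimed bound. The main point requiring care is the verification that $\pi$ has the correct marginals together with the normalization by $m$; by contrast, the convexity inequality and the mass cancellation are routine.
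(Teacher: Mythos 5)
Your proof is correct: the coupling $(\mathrm{Id},\mathrm{Id})_\#(\mu\wedge\eta)+\frac{1}{m}(\mu-\eta)_+\otimes(\mu-\eta)_-$ has the right marginals, the diagonal part costs nothing, and the convexity bound with the mass cancellation gives exactly the stated inequality. The paper itself offers no proof of this lemma, only the citation to Villani, and your argument is precisely the standard proof given there (Theorem 6.15 of that reference), so there is nothing to reconcile.
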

\noindent We refer readers to the books \cite{ambrosio2008gradient,villani2008optimal} for further background on $p$-Wasserstein spaces.
%
%
\subsection*{Associated Energy}

The energy functional \eqref{eqn:energy} can be written as:
\begin{equation}
\label{eqn:energy-3parts}
\CalE^\nu[\mu] = \CalK[\mu] + \nu \CalS[\mu] + \CalV[\mu],
\end{equation}
where $\CalK$, $\CalS$ and $\CalV$ are referred to as the interaction energy, entropy and potential energy, respectively, and are defined for all $\mu\in \CalP(D)$ by:
\begin{equation}\label{energyK}
\CalK[\mu] := \frac{1}{2}\int_D\int_D K(x-y)\dd\mu(y)\dd\mu(x),
\end{equation}
\begin{equation}\label{energyS}
\CalS[\mu] := \begin{dcases} \int_D\rho(x)\log(\rho(x))\dd x, &\quad \text {if }\mu\in\CalP^{ac}(D)  \text{ with } d\mu(x) = \rho(x)\dd x,\\
+\infty, & \quad \text{otherwise,}
\end{dcases}
\end{equation}
and
\begin{equation}
\CalV[\mu] := \int_DV(x)\dd\mu(x).
\end{equation}
In this way, the energy $\CalE^\nu$ is defined on the entire space $\CalP(D)$, but takes the value $+\infty$ on measures which are not absolutely continuous.

Extrema of $\CalE^\nu$ are defined as in \cite{carrillo2018existence}: for $r>0$ we define a $\CalW_p$-$r$ local minimizer of $\CalE^\nu$ to be a measure $\gmin$ such that 
\begin{equation}\label{wprmin}
\CalE^\nu[\gmin]\leq \CalE^\nu[\eta] \txt{0.5}{for all} \eta \in \ballp{p}{\gmin}{r},
\end{equation}
where $\ballp{p}{\gmin}{r}$ is the ball of radius $r$ in $\CalP_p(D)$ centred at $\gmin$. A $\CalW_p$-$r$ local maximizer is defined analogously by reversing the inequality. In what follows, we will use the terms \textit{minimizer}, \textit{maximizer} or \textit{extremizer} in reference to condition \eqref{wprmin} whenever the $\CalW_p$ metric has been established.

\begin{rmrk}\label{ac-gmin}
\normalfont
In light of the definition of the entropy \eqref{energyS}, whenever a global minimizer $\gmin$ of $\CalE^\nu$ exists, it follows that $\gmin\in \CalP^{ac}(D)$, as one can always find a measure $\mu\in \CalP^{ac}(D)$ for which $\CalE^\nu[\mu]<+\infty$ (e.g. for a compact, $d$-dimensional set $F\subset D$, the energy $\CalE^\nu\left[\frac{1}{|F|}\ind{F}\right]$ is finite). 
\end{rmrk}


\subsection*{Assumptions}

\hspace{5mm} Throughout we will make the following minimal assumptions about potentials $K$ and $V$, and the spatial domain $D$.
\noindent
\begin{assum}[Potentials]\label{assumKV}
\normalfont
\quad
\begin{enumerate}[label=(\roman*)]
\item (Local integrability) $K,V\in L^1_{loc}(\Rd)$.
\item (Lower semicontinuity) $K$ and $V$ are lower semicontinuous.
\item (Symmetry of $K$) $K(x) = K(-x)$ for all $x\in \mathbb{R}^d$.
%
%
\end{enumerate}
\end{assum}
\begin{assum}[Domain]\label{assumD}
\normalfont
\quad
\begin{enumerate}[label=(\roman*)]
\item (Domain topology) $D\in \CalB^d$ is closed, connected, and satisfies $|D|>0$. 
\item (Boundary regularity) There exists a unique outward normal vector $n(x)$ associated to almost every $x\in \partial D$.
\end{enumerate}
\end{assum}
%
%
\subsection*{Effective Volume Dimension} 

As we will show in Theorems \ref{NE} and \ref{tightbound}, the following property is a crucial component of the asymptotic upper bound on $K$ below which diffusion dominates and infinite spreading occurs, leading to non-existence of global minimizers of $\CalE^\nu$. Moreover, in domains of the form \eqref{Fdomain} below, this asymptotic bound is sharp.
 
For $D \in \CalB^d$ define the function
\begin{equation}\label{fDr}
V_{_D}(r) = \sup_{x\in D}\left\vert D\cap B_r(x)\right\vert.\\[-3pt] 
\end{equation}
We then define the \textit{effective volume dimension} $\eff$ of $D$ by
\begin{equation}\label{effvoldim}
\eff\ :=\ \sup\Bigl\{s \in \RR \txt{0.2}{:} V_{_D}(r) \ \gtrsim\  r^s \txt{0.2}{as} r \to \infty\, \Bigr\}.
\end{equation}
%
%
Consequently, for some $C>0$ and $r'>0$,
\begin{equation}\label{effvolbound}
V_{_D}(r)\ \geq\  C\,r^{\eff} \txt{0.4}{for all} r>r'\,;
\end{equation}
moreover, $\eff$ is the largest value such that \eqref{effvolbound} holds. In words, the largest volume of $D$ intersect a ball of radius $r$ grows proportionally to $r^{\eff}$. In this way, $\eff\in[0,d]$ and $\eff = 0$ if $D$ is bounded. For intuition in three dimensions, example domains with $\eff = 1$ and $\eff=2$ are an open right cylinder and the space between two infinite parallel planes, respectively. 

In Theorem \ref{tightbound} we consider domains of the form 
\begin{equation}\label{Fdomain}
D = F\times \RR^{d-m},
\end{equation}
where $F$ is a compact subset of $\RR^m$ for some $m\in\left\{1,\dots,d-1\right\}$ and satisfies $|F|>0$. In this case, $D$ has effective volume dimension $\eff = d-m$. To see this, note that
\[V_D(r)\leq |F|(2r)^{d-m},\]
and letting $H^{d-m}_a$ denote a $(d-m)$-dimensional hypercube of side-length $a$, for $r>\frac{\sqrt{d}}{2}\text{diam}(F)$ we have 
\[V_D(r)\geq |F\times H^{d-m}_{\frac{2}{\sqrt{d}}r}| = |F|\left(\frac{2}{\sqrt{d}}r\right)^{d-m}.\]
Together this implies $V_{_D}(r) \sim Cr^{d-m}$ as $r\to\infty$. In this way, $\eff$ is exactly the number of orthogonal directions in $D$ which independently extend to infinity. 


\section{Critical Points of the Energy}
\label{sect:critical-points}
Interaction energies of type \eqref{eqn:energy} (with or without linear/nonlinear diffusion) have been studied extensively in free space ($D = \Rd$). It is well known that critical points of the energy $\CalE^\nu$ in free space satisfy the Euler-Lagrange equation \eqref{ELcond} given below. This equation had been derived in various papers, we refer for instance to \cite{balague2013dimensionality} for a derivation using the $\CalW_2$ metric for $\CalE^\nu$ without diffusion, and to \cite{carrillo2018existence} for interaction energies  with general diffusion under the $\CalW_\infty$ metric. We include a derivation of \eqref{ELcond} here under general $\CalW_p$ metrics over general domains $D$, for lack of a direct reference, using the techniques in \cite{balague2013dimensionality} and  \cite{carrillo2018existence}. 

First, we highlight in Theorem \ref{suppmin} the property that minimizers of $\CalE^\nu$ for every $\nu>0$ are supported on the whole domain $D$. This is briefly mentioned in \cite{carrillo2018existence} for free space and is justified by the authors using the Euler-Lagrange equation. As there is no reason a priori why in general domains $D$ a minimizer $\gmin$ should simultaneously satisfy the Euler-Lagrange equation and have $\supp{\gmin}=D$, we present a proof of Theorem \ref{suppmin} without the Euler-Lagrange equation.
%
%
%
\begin{thm}\label{suppmin}[Minimizers have full support]
Let Assumptions \ref{assumKV} and \ref{assumD} hold. In addition, assume that there exist constants $p_K \geq 0$, $R_K>0$, and $C_K>0$ such that
\begin{equation}\label{powerlaw}
0 \leq K(x) \leq C_K|x|^{p_K} \txt{0.5}{ for all } |x| > R_K.
\end{equation}
If $\critrho\in \CalP^{ac}_p(D)$ is a $\CalW_p$-$r$ local minimizer of $\CalE^\nu$ where $p\in \big[\max\left\{1,p_{_K}\right\}, \infty\big]$, then $\supp{\critrho} = D$.
\end{thm}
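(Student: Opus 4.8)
The plan is to argue by contradiction, exploiting the fact that the logarithmic entropy \eqref{energyS} has \emph{infinite negative slope} when mass is first deposited onto a previously uncovered region, so that no bounded (order $\epsilon$) change in the interaction and potential energies can compensate it. Suppose then that $S := \supp{\critrho} \subsetneq D$. Since $D$ is connected (Assumption \ref{assumD}) and $S$ is a nonempty proper closed subset, the relative boundary $S \cap \overline{D\setminus S}$ is nonempty; fix a point $x_*$ in it. Every ball $B_\eta(x_*)$ then simultaneously carries positive $\critrho$-mass (as $x_* \in \supp{\critrho}$) and meets $D\setminus S$ in a relatively open set of positive Lebesgue measure (here one uses $|D|>0$ and the boundary regularity of Assumption \ref{assumD}).

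Next I would fix $\eta < r/2$ and, inside $B_\eta(x_*)\cap D$, select two disjoint sets of finite positive volume: a set $A \subset \{\delta \le \critrho \le M\}$ on which $\critrho$ is bounded away from $0$ and $\infty$, and a set $B \subset D\setminus S$ on which $\critrho \equiv 0$. For small $\epsilon>0$ define the competitor $\rho_\epsilon := \critrho - \tfrac{\epsilon}{|A|}\ind{A} + \tfrac{\epsilon}{|B|}\ind{B}$, which for $\epsilon < \delta|A|$ is a nonnegative density of unit mass, hence lies in $\CalP^{ac}(D)$. Because all the transported mass stays within $B_\eta(x_*)$, the optimal coupling moves mass a distance $<2\eta<r$, so $\CalW_p(\rho_\epsilon,\critrho)<r$ for every $p\in[1,\infty]$ and $\rho_\epsilon \in \ballp{p}{\critrho}{r}$; for finite $p$ this also follows quantitatively from Lemma \ref{ELlemm2}.

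The heart of the computation is then to expand $\CalE^\nu[\rho_\epsilon]-\CalE^\nu[\critrho]$ and track the $\epsilon$-dependence of each term. The potential energy contributes $O(\epsilon)$ since $V\in L^1_{loc}$ and $A,B$ are bounded. Writing $\psi := -\ind{A}/|A| + \ind{B}/|B|$, the interaction energy expands as $\epsilon\int_D (K*\critrho)\,\psi\,dx + \tfrac{\epsilon^2}{2}\iint K(x-y)\psi(x)\psi(y)\,dx\,dy$; the quadratic term is finite and $O(\epsilon^2)$ because $\psi$ is bounded with compact support and $K\in L^1_{loc}$, while the linear term is finite and $O(\epsilon)$ — this is where the power-law bound \eqref{powerlaw} together with $p\ge p_K$ enters, via Tonelli's theorem and finiteness of the $p_K$-th moment $M_{p_K}(\critrho)$. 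The entropy splits into a contribution on $A$, which is smooth in $\epsilon$ and $O(\epsilon)$ since $\delta\le\critrho\le M$ there, and a contribution on $B$ equal to $\nu(\epsilon\log\epsilon - \epsilon\log|B|)$. Collecting terms gives $\CalE^\nu[\rho_\epsilon]-\CalE^\nu[\critrho] = \nu\,\epsilon\log\epsilon + C\epsilon + O(\epsilon^2) = \epsilon\bigl(\nu\log\epsilon + C + o(1)\bigr)$ for a finite constant $C$, which is strictly negative for all sufficiently small $\epsilon>0$, contradicting the local minimality of $\critrho$.

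I expect the main obstacle to be the \emph{finiteness and order-$\epsilon$ control of the interaction term} $\int_D(K*\critrho)\,\psi\,dx$: this is precisely where the hypotheses \eqref{powerlaw} and $p\ge\max\{1,p_K\}$ are indispensable, as one must bound $\int_{A\cup B}\int_D K(x-y)\,d\critrho(y)\,dx$ by splitting $K$ into its locally integrable part near the diagonal and its at-most-polynomial tail, the latter being absorbed by $M_{p_K}(\critrho)<\infty$. The remaining delicate point is purely measure-theoretic and topological — guaranteeing that the sets $A$ and $B$ of positive finite volume can be found inside the single small ball $B_\eta(x_*)$ — which is what makes the argument uniform across all $p$, including the $\CalW_\infty$ case where the competitor must stay within a fixed $\CalW_\infty$-distance of $\critrho$.
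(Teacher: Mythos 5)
Your proposal is correct, and it rests on the same core mechanism as the paper's proof: depositing a uniform layer of mass $\epsilon$ on a positive-measure set $B\subset D\setminus\supp{\critrho}$ near the boundary of the support, so that the entropy contribution $\nu\,\epsilon\log(\epsilon/|B|)$ — with its infinite negative slope at $\epsilon=0$ — overwhelms the $O(\epsilon)$ changes in the interaction and potential energies (which you control exactly as the paper does, splitting $K$ into its $L^1_{loc}$ part near the diagonal and its power-law tail absorbed by $M_{p_K}(\critrho)$). The difference is where the deposited mass comes from. The paper rescales the whole measure, taking the competitor $(1-\alpha)\critrho+\alpha|A|^{-1}\ind{A}$; this avoids your auxiliary set on which $\critrho$ is bounded away from $0$ and $\infty$, and makes the entropy identity exact because the two pieces have disjoint supports, but the perturbation is global, so the Wasserstein estimate goes through Lemma \ref{ELlemm2} and the moment $M^{x_0}_p(\critrho)$ and, as written, covers only finite $p$. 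Your local transfer $-\frac{\epsilon}{|A|}\ind{A}+\frac{\epsilon}{|B|}\ind{B}$ confines all transport to a single small ball, which makes the $\CalW_p$ bound trivial and uniform in $p$, including the $\CalW_\infty$ case that the theorem statement allows; the price is the (routine) measure-theoretic selection of $A\subset\{\delta\le\critrho\le M\}$ of positive Lebesgue measure and a slightly longer entropy expansion on $A$, where you correctly note that smoothness of $u\mapsto u\log u$ away from $0$ gives an $O(\epsilon)$ contribution. At the one genuinely delicate topological step — that $D\setminus\supp{\critrho}$ meets some small ball about a boundary point of the support in a set of positive Lebesgue measure — you invoke exactly the same (slightly informal) appeal to Assumption \ref{assumD} that the paper does, so no gap is introduced relative to the published argument.
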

\begin{proof}
We proceed by contradiction. Let $\critrho$ be a $\CalW_p$-$r$ local minimizer and assume to the contrary that $\supp{\critrho} \subsetneq D$. By definition, $\supp{\critrho}$ is a closed subset of $D$, hence $D\,\setminus\,\supp{\critrho}$ must have positive Lebesgue measure. This implies that there exists a point $x_0 \in\partial\, \supp{\critrho}$ and $\delta>0$ such that the set $A = B_\delta(x_0)\cap \Big(D\setminus\supp{\critrho}\Big)$ has positive Lebesgue measure $|A|$. We now construct a measure $\eta$ with $\CalW_p(\critrho,\eta)<r$ such that $\CalE^\nu[\eta]<\CalE^\nu[\critrho]$, contradicting the assumption that $\critrho$ is a $\CalW_p$-$r$ local minimizer of $\CalE^\nu$. Define
\[\eta = (1-\alpha)\critrho +\alpha\frac{1}{|A|}\ind{A},\]
where $\alpha\in (0,1)$ will be picked in two stages.

First, using Lemma \ref{ELlemm2} we have 
\begin{align*}
\CalW^p_p(\critrho,\eta)&\leq 2^{(p-1)}\int_D|x-x_0|^p\,d|\critrho-\eta|(x)\\ 
&=\alpha\,2^{(p-1)}\left(\int_D|x-x_0|^p\,d\critrho(x)+\frac{1}{|A|}\int_A|x-x_0|^p\,dx\right)\\
&\leq \alpha\,2^{(p-1)}\left(M^{x_0}_p(\critrho)+\delta^p\right),
\end{align*}
and so we choose
\begin{equation}\label{alphbound1}
\alpha<\min\left\{1,\ \dfrac{r^p}{2^{(p-1)}\left(M^{x_0}_p(\critrho)+\delta^p\right)}\right\}
\end{equation}
to ensure that $\CalW_p(\critrho,\eta) < r$.

Next, we find an additional constraint on $\alpha$ to ensure that $\CalE^\nu[\eta]<\CalE^\nu[\critrho]$ by bounding terms in the energy. For any $x\in \RR$ we have $(1-\alpha)^2x < x+2\alpha|x|$, and so a direct calculation of the interaction energy yields the bound
\begin{align*}
\CalK[\eta] &= (1-\alpha)^2\CalK[\critrho] + \frac{\alpha(1-\alpha)}{|A|}\int_D\left(\int_A K(x-y)dy\right)\critrho(x)\,dx+\frac{\alpha^2}{2|A|^2}\int_A\int_A K(x-y)\,dxdy\\
&< \CalK[\critrho]+\alpha\left(2\left\vert\CalK[\critrho]\right\vert+\frac{1}{|A|}\underbrace{\int_D\left(\int_A \left\vert K(x-y)\right\vert\,dy\right)\critrho(x)\,dx}_{:= I}+\frac{1}{2|A|}\nrm{K}_{L^1(B_{2\delta}(0))}\right).
\end{align*}
The integral $I$ is finite independently of $\alpha$, and hence so is the entire expression in parentheses, due to the power-law growth \eqref{powerlaw} and local integrability of $K$ together with the fact that $\critrho$ has finite $p_{_K}$-th moment. Indeed, fixing $R>{R}_{_{K}}+2\delta$, we partition the integral to get  
\begin{align*}
I &=\int_{B_R(x_0)\cap D}\left(\int_A |K(x-y)|\, dy \right)\critrho(x)\,dx+\int_{B_R^c(x_0)\cap D}\left(\int_A |K(x-y)|\,dy \right) \critrho(x)\,dx\\[5pt]
&\leq \nrm{K}_{L^1(B_{R+\delta}(0))} \int_{B_R(x_0)\cap D}\critrho(x)\,dx+C_K|A|\int_{B_R^c(x_0)\cap D}(|x-x_0|+\delta)^{p_{_K}} \critrho(x)\,dx\\[5pt]
&\leq \nrm{K}_{L^1(B_{R+\delta}(0))}+C_K|A|\left(1+\frac{\delta}{R}\right)^{p_{_K}} M^{x_0}_{p_{_K}}(\critrho),
\end{align*}
which is finite. From this bound we have that for some $C>0$ independent of $\alpha$, 
\[\CalK[\eta] < \CalK[\critrho]+\alpha C.\]
For the entropy, since $A \cap \text{supp}(\critrho) = \emptyset$ we have
\begin{align*}
\mathcal{S}[\eta] &= (1-\alpha)\int_D\critrho(x)\log((1-\alpha)\critrho(x))\,dx+\frac{\alpha}{|A|}\int_A\log\left(\frac{\alpha}{|A|}\right)\,dx\\
&=(1-\alpha)\mathcal{S}[\critrho]+(1-\alpha)\log(1-\alpha)+\alpha\log\left(\frac{\alpha}{|A|}\right)\\
&< (1-\alpha)\mathcal{S}[\critrho]+\alpha\log\left(\frac{\alpha}{|A|}\right)\\
&= \mathcal{S}[\critrho] +\alpha\left(-\mathcal{S}[\critrho]+\log\left(\frac{\alpha}{|A|}\right)\right).
\end{align*}
Together this allows us to bound the difference in energy as follows:
\begin{align*}
\mathcal{E}^\nu[\eta]-\mathcal{E}^\nu[\critrho] &< \alpha\left(C -\nu\mathcal{S}[\critrho]+ \nu\log\left(\frac{\alpha}{|A|}\right)-\CalV[\critrho]+\frac{1}{|A|}\int_AV(x)\,dx\right).
\end{align*}
Now, choosing $\alpha$ such that
\[\alpha<|A|\,\exp\left(-\frac{C}{\nu}+\mathcal{S}[\critrho]+\frac{1}{\nu}\CalV[\critrho]-\frac{1}{\nu|A|}\int_AV(x)\,dx\right),\]
along with the constraint \eqref{alphbound1}, we see by the monotonicity of the logarithm that 
\[\mathcal{E}^\nu[\eta]<\mathcal{E}^\nu[\critrho].\]
Since $\eta$ has lower energy than $\critrho$ and lives in the ball $\ballp{p}{\critrho}{r}$, $\critrho$ cannot be a $\CalW_p$-$r$ local minimizer, giving us the desired contradiction. Thus, the support of $\critrho$ must be the entire domain $D$.
\end{proof}


We now derive the Euler-Lagrange equation.
\begin{thm}\label{EL} [Euler-Lagrange equation]
Let Assumptions \ref{assumKV} and \ref{assumD} hold. Suppose that $\critrho\in \CalP^{ac}_p(D)$ is a $\CalW_p$-$r$ local extremizer of $\CalE^\nu$ for some $p\in \big[1, \infty\big]$. Then there exists a constant $\lambda\in \RR$ such that
\begin{equation}\label{ELcond}
K*\critrho(x) +\nu\log(\critrho(x)) +V(x)= \lambda \quad \text{ for } \quad \critrho\text{-a.e. } x\in D.
\end{equation}
\end{thm}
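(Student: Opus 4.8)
The plan is to extract \eqref{ELcond} from the vanishing of the first variation of $\CalE^\nu$ along a mass-preserving, absolutely continuous perturbation of $\critrho$. Rather than the additive competitors used in the proof of Theorem \ref{suppmin}, I would use \emph{multiplicative} perturbations: for a bounded measurable $\psi:D\to\RR$ satisfying the mass-preservation constraint $\int_D \psi\,d\critrho = 0$, set $\rho_\epsilon = \critrho\,(1+\epsilon\psi)$ for $|\epsilon| < 1/\nrm{\psi}_{L^\infty}$, so that $\rho_\epsilon\in\CalP^{ac}(D)$. This choice is deliberate on two counts. First, $1+\epsilon\psi$ stays bounded away from $0$ and $\infty$, which isolates the logarithmic singularity of the entropy into a term that is \emph{linear} in $\epsilon$. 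Second, both signs of $\epsilon$ are admissible, so that local extremality, whether a minimum or a maximum, forces the one-variable function $\epsilon\mapsto\CalE^\nu[\rho_\epsilon]$ to have a vanishing derivative at $\epsilon=0$, giving an equality directly rather than a one-sided inequality.

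First I would check admissibility. Since $d|\critrho-\rho_\epsilon| = |\epsilon|\,|\psi|\,\critrho\,dx$, Lemma \ref{ELlemm2} gives, for $p<\infty$, the bound $\CalW_p^p(\critrho,\rho_\epsilon)\leq 2^{p-1}|\epsilon|\,\nrm{\psi}_{L^\infty}\,M^{x_0}_p(\critrho)\to 0$ as $\epsilon\to 0$, so $\rho_\epsilon\in\ballp{p}{\critrho}{r}$ for $|\epsilon|$ small and \eqref{wprmin} applies; for $p=\infty$ the same conclusion follows by taking $\psi$ supported in a ball of radius less than $r$ and transporting the excess mass locally. Next I would differentiate $\CalE^\nu[\rho_\epsilon]$ at $\epsilon=0$ term by term. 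The interaction energy is quadratic, so by the symmetry of $K$ its derivative is $\int_D (K*\critrho)\,\psi\,d\critrho$, and the potential energy contributes $\int_D V\,\psi\,d\critrho$. For the entropy I split $\log\rho_\epsilon=\log\critrho+\log(1+\epsilon\psi)$: the first piece enters $\CalS[\rho_\epsilon]$ as $\CalS[\critrho]+\epsilon\int_D\psi\,\critrho\log\critrho\,dx$, while $\int_D\critrho(1+\epsilon\psi)\log(1+\epsilon\psi)\,dx$ has a uniformly bounded integrand and derivative $\int_D\psi\,d\critrho=0$ at $\epsilon=0$. Hence $\tfrac{d}{d\epsilon}\CalS[\rho_\epsilon]\big|_0=\int_D(\log\critrho)\,\psi\,d\critrho$, the spurious constant from $\tfrac{d}{ds}(s\log s)$ being annihilated by mass preservation.

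Collecting the three derivatives and writing $\Phi := K*\critrho+\nu\log\critrho+V$, extremality yields $\int_D \Phi\,\psi\,d\critrho = 0$ for every admissible $\psi$. To conclude, given an arbitrary bounded measurable $g$ I would take $\psi = g-\int_D g\,d\critrho$, obtaining $\int_D(\Phi-\lambda)\,g\,d\critrho=0$ with $\lambda:=\int_D\Phi\,d\critrho$; choosing $g$ to be the truncated sign of $\Phi-\lambda$ then forces $\Phi=\lambda$ for $\critrho$-a.e.\ $x\in D$, which is exactly \eqref{ELcond}. (For $p=\infty$, the localized perturbations give $\Phi$ equal to a constant on each small ball charged by $\critrho$; matching the constants over an overlapping cover of $\supp{\critrho}$ upgrades this to a single $\lambda$.)

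The main obstacle is rigor in the first-variation computation, not its formal content. I must guarantee $\int_D|\Phi|\,d\critrho<\infty$, so that $\lambda$ and the derivative are well defined, and justify differentiation under the integral despite the mere local integrability and growth of $K,V$ and the unbounded $\log\critrho$. The delicate ingredient is finiteness of $\int_D\critrho|\log\critrho|\,dx$: the positive part is controlled by the finiteness of the energy, while the negative part is bounded using the finite $p$-th moment of $\critrho$ via the Gibbs inequality $\int_D\critrho\log\critrho\,dx \geq -\log\!\int_D e^{-|x|^p}\,dx - M^{0}_p(\critrho)$. Once entropy finiteness and the companion integrability of $K*\critrho$ and $V$ against $\critrho$ are in hand, the multiplicative structure makes the interchange of derivative and integral routine by dominated convergence, since $\log(1+\epsilon\psi)$ and its $\epsilon$-derivative are uniformly bounded on the support of the perturbation.
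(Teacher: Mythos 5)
Your proposal is correct and follows essentially the same route as the paper: your multiplicative perturbation $\rho_\epsilon=\critrho(1+\epsilon\psi)$ with $\int_D\psi\,d\critrho=0$ is exactly the paper's competitor $\eta=\critrho+\epsilon\left(\phi-\int_D\phi\,d\critrho\right)\critrho$, admissibility is checked with the same Lemma \ref{ELlemm2}, and using both signs of $\epsilon$ to turn extremality into the identity $\int_D\Phi\,\psi\,d\critrho=0$ is the same device. The only (cosmetic) divergence is the final step --- you test against a truncated sign of $\Phi-\lambda$ where the paper uses indicators of the level sets $\{\Lambda<\lambda^*\}$ and $\{\Lambda>\lambda^*\}$ --- and your explicit attention to entropy integrability and to the $p=\infty$ case is, if anything, more careful than the paper's.
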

\begin{proof} 
Without loss of generality, assume $\critrho$ is a $\CalW_p$-$r$ local \textit{minimizer} (the case where $\critrho$ is a maximizer follows similarly by reversing the following inequality). As in \cite{carrillo2018existence}, it follows that
\[\frac{d}{dt}\mathcal{E}_\nu[\critrho+t(\eta-\critrho)]\Bigg\vert_{t=0}\geq 0\]
for all $\eta\in \ballp{p}{\critrho}{r}$. From this a direct calculation then yields
\begin{equation} \label{ELbound}
 \int_D \left(K*\critrho+\nu\log(\critrho)+V\right)d\eta\geq \int_D
\left(K*\critrho+\nu\log(\critrho)+V\right)d\critrho.
\end{equation}
We now construct a suitably general $\eta$ to deduce \eqref{ELcond}. Choose $\phi$ in $L^\infty(D;\,\critrho)\cap L^1(D;\,\critrho)$
and define
\[
\eta = \critrho+\epsilon\left(\phi-\int_D\phi \,d\critrho\right)\critrho,
\]
where $\epsilon$ will be chosen such that $\eta \in \ballp{p}{\critrho}{r}$. It is clear that $\eta(D)=1$. To ensure that $\eta\geq 0$ and hence $\eta\in\mathcal{P}_p(D)$, it suffices to pick $\epsilon \leq \frac{1}{2\nrm{\phi}_\infty}$. Another application of Lemma \ref{ELlemm2} gives
\begin{align*}
\CalW_p^p(\critrho,\eta)&\leq 2^{p-1}\int_D|x|^p\,d|\critrho-\eta|\\
&= \epsilon \,2^{p-1}\int_D|x|^p\left\vert\phi - \int_D\phi \,d\critrho\right\vert \,d\critrho\\
&\leq \epsilon\, 2^p\nrm{\phi}_{\infty}M_p(\critrho).
\end{align*}   
Hence, $\CalW_p(\critrho,\eta) <r$ provided 
\[\epsilon <\min\left\{\dfrac{r^p}{2^p \nrm{\phi}_{\infty} M_p(\critrho)},\,\frac{1}{2\nrm{\phi}_\infty}\right\},\] 
which guarantees that $\eta \in \ballp{p}{\critrho}{r}$. Substituting $\eta$ into \eqref{ELbound} 
then gives us
\[\int_D \left(\phi-\int_D\phi \,d\critrho\right)\left(K*\critrho+\nu\log(\critrho)+V\right)\,d\critrho\geq 0.\]
The above calculations work for both $\phi$ and $-\phi$, hence upon multiplying by $-1$ we find that 
\[ \int_D \left(\phi-\int_D\phi\, d\critrho\right)\left(K*\critrho+\nu\log(\critrho)+V\right)\,d\critrho = 0.\]
Now, by setting $\phi = \ind{B}$ for any Borel set $B\subset \supp{\critrho}$ with $\critrho(B) >0$, we further have
\begin{equation}\label{ELcondvanish}
\frac{1}{\critrho(B)}\int_B \left(\nu\log(\critrho)+K*\critrho+V\right)d\critrho
 = \int_D \left(\nu\log(\critrho)+K*\critrho+V\right)d\critrho.
\end{equation}
From this we deduce \eqref{ELcond} by contradiction. Define 
\begin{equation}\label{Lambda}
\Lambda(x) := K*\critrho(x)+\nu\log(\critrho(x))+V(x)
\end{equation}
and assume that $\Lambda$ is not constant $\critrho$-a.e. Then there exists $\lambda^*\in \RR$ such that the sets $B_1 = \left\{\Lambda< \lambda^*\right\}$ and $B_2 = \left\{\Lambda> \lambda^*\right\}$ satisfy $\critrho(B_1)>0$ and $\critrho(B_2)>0$. 
Using $B=B_1$ and $B=B_2$ in \eqref{ELcondvanish} then gives us
\[\lambda^* \,>\,\int_D\Lambda(x)\,d\critrho\txt{0.5}{and}\lambda^* \,<\,\int_D\Lambda(x)\,d\critrho,\]
respectively, which is a contradiction, thus $\Lambda$ must be constant $\critrho$--a.e. This completes the proof. 
\end{proof}


\paragraph{Fixed-Point Characterization.} The Euler-Lagrange equation \eqref{ELcond} can be recast in the following way if the critical point $\gmin$ satisfies $\supp{\gmin} =D$. Solving for $\critrho$ using the logarithm we have 
\[\critrho(x) = \frac{1}{Z(\critrho)}\exp\left(-\frac{K*\critrho(x)+V(x)}{\nu}\right),\]
where
\begin{equation}\label{gibbstatZ}
\hspace{4pt}Z(\critrho) := \int_D \exp\left(-\frac{K*\critrho(x)+V(x)}{\nu}\right)\, dx.
\end{equation}
This motivates the following corollary which will be used below. 
\begin{corr}\label{fixedpointmap}
Let $\critrho \in \CalP^{ac}(D)$ have $\supp{\critrho} = D$. Then $\critrho$ satisfies \eqref{ELcond} if and only if $\critrho$ is a fixed point of the map $T: \CalP(D)\to\CalP^{ac}(D)$ defined by 
\begin{equation}\label{gibbstat}
T(\mu) = \frac{1}{Z(\mu)}\exp\left(-\frac{K*\mu(x)+V(x)}{\nu}\right)
\end{equation}
for $Z$ defined in \eqref{gibbstatZ}.
\end{corr}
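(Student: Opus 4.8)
The plan is to prove the equivalence by passing between the additive form \eqref{ELcond} and the multiplicative (Gibbs-type) form \eqref{gibbstat} through the exponential and logarithm, making rigorous the formal computation preceding the statement. The only genuine content beyond this algebra is the identification of the Lagrange multiplier $\lambda$ with $-\nu\log Z(\critrho)$ and the verification that the normalizing integral \eqref{gibbstatZ} is taken over all of $D$. The hypothesis $\supp{\critrho}=D$ enters precisely here: it lets us treat the $\critrho$-a.e.\ relation \eqref{ELcond} as a relation holding (Lebesgue-)a.e.\ on the full domain $D$, with $\critrho$ positive, so that the self-consistent normalization matches $Z(\critrho)$.

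For the reverse implication (fixed point $\Rightarrow$ \eqref{ELcond}) I would start from $\critrho = T(\critrho)$, i.e.\ $\critrho(x) = Z(\critrho)^{-1}\exp\!\left(-(K*\critrho(x)+V(x))/\nu\right)$ for a.e.\ $x\in D$. Since the right-hand side is strictly positive a.e., $\log\critrho$ is well-defined; taking logarithms, multiplying by $\nu$, and rearranging gives $K*\critrho(x)+\nu\log\critrho(x)+V(x) = -\nu\log Z(\critrho)$ a.e.\ on $D$, which is \eqref{ELcond} with the explicit constant $\lambda = -\nu\log Z(\critrho)$. This direction is purely algebraic and needs only $Z(\critrho)\in(0,\infty)$, which is built into $T(\critrho)$ being a well-defined element of $\CalP^{ac}(D)$.

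For the forward implication (\eqref{ELcond} $\Rightarrow$ fixed point) I would solve \eqref{ELcond} for $\critrho$ by exponentiation to obtain $\critrho(x) = e^{\lambda/\nu}\exp\!\left(-(K*\critrho(x)+V(x))/\nu\right)$. Reading this as an identity valid a.e.\ on $D$ (by full support), I integrate over $D$ and use $\int_D \critrho\,dx = 1$ to get $e^{\lambda/\nu}\int_D \exp\!\left(-(K*\critrho+V)/\nu\right)dx = 1$, that is $e^{\lambda/\nu}Z(\critrho) = 1$. In particular $Z(\critrho) = e^{-\lambda/\nu}\in(0,\infty)$ is automatically finite, and substituting $e^{\lambda/\nu} = Z(\critrho)^{-1}$ back yields $\critrho = T(\critrho)$ as defined in \eqref{gibbstat}.

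I expect the delicate step to be the reconciliation of the ``$\critrho$-a.e.'' qualifier in \eqref{ELcond} with a genuine a.e.\ statement on all of $D$, together with the fact that the integral in \eqref{gibbstatZ} runs over the whole domain rather than merely over the support. This is exactly where $\supp{\critrho}=D$ is indispensable: if the support were a proper subset of $D$, the density would still solve \eqref{ELcond} on its support yet fail to be a fixed point of $T$, since $T(\critrho)$ necessarily charges all of $D$ and normalizes against the integral of the Gibbs factor over all of $D$. Thus the role of the hypothesis is to force the self-consistent normalization of the density \eqref{gibbstat} to coincide with the multiplier produced by the Euler-Lagrange condition; I would make this explicit rather than leave it to the formal computation.
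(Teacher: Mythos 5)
Your argument is correct and follows essentially the same route as the paper, which derives the fixed-point form by exponentiating \eqref{ELcond}, normalizing over all of $D$ via $\int_D\critrho\,dx=1$, and identifying $\lambda=-\nu\log Z(\critrho)$ exactly as you do. Your explicit discussion of where $\supp{\critrho}=D$ enters (matching the self-consistent normalization to the Lagrange multiplier, and ruling out densities supported on a proper subset) is a welcome elaboration of what the paper leaves as a formal computation.
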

By integrating \eqref{ELcond} against $d\critrho(x)$, we can also identify the constant $\lambda$ as  
\[\lambda = \CalE^\nu[\critrho]+\CalK[\critrho] = -\nu\log\left(Z(\critrho)\right).\]
In Section \ref{sect:numerics} we discretize \eqref{gibbstat} for numerical computation of critical points. We note that \eqref{gibbstat} has been used in the literature, for instance by Benachour et al. in \cite{benachour1998nonlinear} to show existence of stationary states for associated McKean-Vlasov processes on $D=\RR$. 
%
%
%
%
%


\section{Review: Existence of Global Minimizers in Free Space}
\label{sect:free-space}
To exhibit the role played by domain geometry in determining existence of global minimizers, we briefly review existence results in free space. In \cite{carrillo2018existence}, the authors show that when $D=\Rd$ and $V=0$, existence of a global minimizer is guaranteed as soon as the energy is bounded below. As we will show, this is not the case in domains with boundaries.  

Unboundedness from below of the energy is shown in \cite{carrillo2018existence} to correspond to an imbalance of diffusive and aggregative forces. If local attractive forces are too strong with respect to local diffusive repulsion, then the energy is lowered to $-\infty$ as the swarm aggregates onto a discrete set of points. It is shown in Theorem 4.1 of \cite{carrillo2018existence} that if 
\begin{equation}\label{NEaggreg}
\liminf_{|x|\to 0} \nabla K(x)\cdot x > 2d\nu,
\end{equation}
then such aggregation-dominated contraction occurs and $\inf \CalE^\nu =-\infty$. Condition \eqref{NEaggreg} is shown to imply that 
\[K \lesssim 2d\nu \log|x| \txt{0.4}{as} |x|\to 0,\]
hence aggregation-dominated contraction may occur unless $K$ is well-behaved at the origin; in particular $K$ cannot have a singularity at the origin worse than logarithmic.   

If diffusion is too strong with respect to long-range attractive forces, then minimizing sequences of $\CalE^\nu$ vanish as diffusion causes infinite spreading of the swarm throughout the domain. For linear diffusion, Carrillo et al. \cite{carrillo2018existence} show that existence of global minimizers of $\CalE^\nu$  in free space for $V=0$ corresponds to the following conditions on $K$, $\nu$ and the dimension $d$, which prevents both diffusion-dominated spreading and aggregation-dominated contraction. Let $K$ satisfy Assumption \ref{assumKV} and be positive and differentiable away from the origin. If $K$ satisfies
\begin{equation}\label{nonexistcond1}
\limsup_{|x|\to \infty}\,\nabla K(x)\cdot x<2d\nu,
\end{equation}
then $\CalE^\nu$ is not bounded below and a global minimizer does not exist. Alternatively, if
\begin{equation}\label{existcond1}
\liminf_{|x|\to \infty}\,\nabla K(x)\cdot x>2d\nu,
\end{equation}
then $\CalE^\nu$ is bounded below and there exists $\gmin \in \CalP(\Rd)$ such that
\[\CalE^\nu(\gmin)=\inf \CalE^\nu>-\infty.\]
By requiring $K$ to be positive, the condition \eqref{NEaggreg} is (sufficiently) prevented and aggregation-dominated contraction cannot occur. In addition, the constraint \eqref{existcond1}, shown in \cite{carrillo2018existence} to imply
\begin{equation}\label{existcond2}
K(x) \gtrsim 2d \nu \log|x| \txt{0.4}{as}|x|\to \infty,
\end{equation}
prevents diffusion-dominated spreading by requiring that $K$ grows at least logarithmically as $|x|\to \infty$. Moreover, by inspection of condition \eqref{nonexistcond1}, we see that condition \eqref{existcond1} is sharp.

In summary, in free space, if the two force-imbalance pathologies of diffusion-dominated spreading or aggregation-dominated contraction are prevented, then the energy is bounded below, which immediately implies existence of a global minimizer. Moreover, condition \eqref{existcond1} for preventing diffusion-dominated spreading is sharp. 

In domains with boundaries, unboundedness from below of the energy due to aggregation-dominated contraction occurs under the same condition as in free space, \eqref{NEaggreg}, while diffusion-dominated spreading occurs under a condition analogous to \eqref{nonexistcond1}, only with dependence on the effective volume dimension $\eff$ instead of the dimension $d$. Moreover, we will see that such conditions for spreading are sharp in the class of domains defined by \eqref{Fdomain}. In addition, we find that boundedness from below of the energy is not enough to grant existence of a global minimizer when the domain is not suitably symmetric. 
\begin{rmrk}
\normalfont
Requirements on the interaction potential $K$ for existence or non-existence of global minimizers of $\CalE^\nu$ are presented in Theorems \ref{NE}, \ref{tightbound}, \ref{existence1} and \ref{existence2} in the form of asymptotic relations similar to \eqref{existcond2} which provide a more explicit characterization of $K$ than \eqref{existcond1}; however, we could have equivalently worked with conditions such as \eqref{existcond1} involving $\nabla K$. 
\end{rmrk}
\noindent The following lemmas from \cite{carrillo2018existence} will be used in the existence proofs in our paper. 
\begin{lemm}\label{lowerSC}
\cite{carrillo2018existence}
Assume that $K$ and $V$ are both lower semicontinuous. Then $\CalE^\nu$ is weakly-* lower semicontinuous, in that for any sequence $\left\{\mu_n\right\}_{n\geq 0} \subset \CalP(D)$ such that $\mu_n\overset{*}{\rightharpoonup} \mu\in \CalP(D)$, it holds that
\[\liminf_{n\to \infty} \CalE^\nu[\mu_n] \geq \CalE^\nu[\mu].\] 
\end{lemm}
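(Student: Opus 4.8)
The plan is to exploit the additive structure $\CalE^\nu = \CalK + \nu\CalS + \CalV$ from \eqref{eqn:energy-3parts} and prove that each of the three functionals is separately weakly-* lower semicontinuous; the conclusion then follows from the superadditivity of $\liminf$, namely $\liminf(a_n+b_n+c_n)\ge \liminf a_n+\liminf b_n+\liminf c_n$, provided the right-hand side contains no indeterminate $\infty-\infty$. To set up the reduction, I would first dispose of the trivial case: if $\liminf_n\CalE^\nu[\mu_n]=+\infty$ there is nothing to prove, so I may pass to a subsequence (not relabeled) along which $\CalE^\nu[\mu_n]$ converges to a finite limit; such $\mu_n$ are necessarily absolutely continuous by \eqref{energyS}, while still $\mu_n\overset{*}{\rightharpoonup}\mu$.

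For the potential term $\CalV$, the key observation is that a lower semicontinuous, bounded-below $V$ is the pointwise increasing limit of its capped Lipschitz inf-convolutions $V_k(x)=\inf_{y\in D}(V(y)+k|x-y|)\wedge k$, which are bounded and continuous with $V_k\uparrow V$. For each fixed $k$, weak-* convergence gives $\int_D V_k\,d\mu=\lim_n\int_D V_k\,d\mu_n\le\liminf_n\int_D V\,d\mu_n$ since $V_k\le V$; letting $k\to\infty$ and applying monotone convergence yields $\CalV[\mu]\le\liminf_n\CalV[\mu_n]$. This is precisely the lower-semicontinuity half of the Portmanteau theorem, and it is here that lower-boundedness of $V$ is genuinely needed (it fails for, e.g., $V(x)=-|x|^2$ with mass escaping to infinity). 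The interaction term $\CalK$ is handled identically after noting that $\mu_n\overset{*}{\rightharpoonup}\mu$ forces $\mu_n\otimes\mu_n\overset{*}{\rightharpoonup}\mu\otimes\mu$ on $D\times D$: convergence holds on product test functions $f(x)g(y)$, and the tightness of $\{\mu_n\}$ furnished by Prokhorov's theorem upgrades this to full narrow convergence of the products. Since $(x,y)\mapsto K(x-y)$ is lower semicontinuous and bounded below on $D\times D$, the same inf-convolution / monotone-convergence argument gives $\CalK[\mu]\le\liminf_n\CalK[\mu_n]$.

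The entropy term $\CalS$ is the main obstacle and requires a different mechanism, since it involves the density $\rho$ rather than integration of a fixed test function against $\mu$. Here I would invoke the classical lower semicontinuity of the internal-energy functional $\mu\mapsto\int_D U(\rho)\,dx$ for a convex, lower semicontinuous integrand $U$ that is bounded below and superlinear at infinity; for $U(s)=s\log s$ one has $U\ge-1/e$ and $\lim_{s\to\infty}U(s)/s=+\infty$, so the recession constant is $+\infty$ and the singular part of any weak-* limit contributes $+\infty$ --- exactly reproducing the convention in \eqref{energyS} that $\CalS[\mu]=+\infty$ when $\mu\notin\CalP^{ac}(D)$. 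This statement is standard in the calculus of variations and in the gradient-flow literature, its proof resting on convexity together with Fatou's lemma (e.g. via the representation of $\CalS$ as a supremum of narrowly continuous affine functionals); the only delicate point on an unbounded $D$ is that the reference Lebesgue measure is infinite, which is why one works with the superlinear recession term rather than a naive relative-entropy duality. Granting this, $\CalS[\mu]\le\liminf_n\CalS[\mu_n]$ holds for every $\mu$, including non-absolutely-continuous limits.

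Finally I would combine the three inequalities. Because $K$ and $V$ are bounded below and each $\mu_n$ is a probability measure, $\CalK[\mu_n]$ and $\CalV[\mu_n]$ are bounded below uniformly in $n$, so superadditivity of $\liminf$ applies with no indeterminate form and gives $\liminf_n\CalE^\nu[\mu_n]\ge\CalK[\mu]+\nu\CalS[\mu]+\CalV[\mu]=\CalE^\nu[\mu]$. I expect the entropy lower semicontinuity to be the technically deepest ingredient, and the bookkeeping needed to rule out $\infty-\infty$ when summing (which is why lower-boundedness of $K$ and $V$, automatic in all our applications where $K,V\ge 0$, is used) to be the point requiring the most care.
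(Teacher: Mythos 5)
The paper itself gives no proof of this lemma --- it is imported directly from \cite{carrillo2018existence} --- so the only question is whether your argument stands on its own. Your termwise strategy is the natural one, and the $\CalV$ and $\CalK$ steps are sound once $K$ and $V$ are assumed bounded below (a hypothesis absent from the statement but, as you note, satisfied in every application in the paper): the inf-convolution/Portmanteau argument and the upgrade $\mu_n\otimes\mu_n\overset{*}{\rightharpoonup}\mu\otimes\mu$ are both standard and correct. The genuine gap is the entropy step. The claim that $\CalS[\mu]\le\liminf_n\CalS[\mu_n]$ for \emph{every} weakly-* convergent sequence in $\CalP(D)$ is false when $D$ is unbounded, which is precisely the regime the paper cares about (half-spaces, cylinders, $F\times\RR^{d-m}$). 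Concretely, on $D=\RR^d$ take $\rho_0$ a smooth density supported in $B_1(0)$ and set $\mu_n=(1-\tfrac1n)\rho_0+\tfrac1n\,|B_{R_n}(0)|^{-1}\ind{B_{R_n}(0)}$ with $R_n=e^{n^2}$. Then $\mu_n\overset{*}{\rightharpoonup}\rho_0$, but the flat piece contributes roughly $\tfrac1n\log\bigl(\tfrac{1}{n|B_{R_n}(0)|}\bigr)\sim-dn\to-\infty$ to $\CalS[\mu_n]$, so $\liminf_n\CalS[\mu_n]=-\infty<\CalS[\rho_0]$. The classical lower semicontinuity of $\int_D U(\rho)\,dx$ that you invoke holds with respect to weak-* convergence only on bounded domains, or for $U\ge0$, or on sublevel sets of a moment functional; the superlinear recession term handles \emph{concentration} (singular limits are assigned $+\infty$), but the failure mode here is the negative part of the entropy \emph{spreading} to infinity, which it does not touch. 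Your parenthetical about the infinite reference measure points at the right difficulty, but the proposed remedy does not address it.

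The repair --- and what the cited reference in effect does --- is to refuse to treat $\CalS$ in isolation. Either one adds a uniform moment-type bound such as $\sup_n\int\log(1+|x|^2)\,d\mu_n<\infty$, under which the Carleman-type estimate $\int\rho\,(\log\rho)^-\,dx\le\epsilon\int|x|^2\rho\,dx+C_\epsilon$ makes the negative parts of $\rho_n\log\rho_n$ uniformly small near infinity and restores lower semicontinuity of $\CalS$; or one proves lower semicontinuity of the combination $\CalK+\nu\CalS$ directly, using the logarithmic growth of $K$ together with the logarithmic HLS inequality (Lemma \ref{lemma:HLS}) to dominate the negative entropy by a fraction of the interaction energy. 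In the paper's existence proofs the lemma is only ever applied along tight minimizing sequences for which exactly this control is available, so nothing downstream breaks; but as a freestanding proof of the statement as written, your entropy step needs one of these additional inputs, and the superadditivity bookkeeping at the end must then be redone around whichever grouping of terms is actually lower semicontinuous.
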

\begin{lemm}{(Logarithmic Hardy-Littlewood-Sobolev (HLS) inequality \cite[Lemma 2.6]{carrillo2018existence})}
\label{lemma:HLS}
Let $\rho\in \CalP^{ac}(\Rd)$ satisfy $\log(1+|\cdot|^2)\rho\in L^1(\Rd)$. Then there exists $C_0\in \RR$ depending only on $d$ such that
\begin{equation}\label{HLS}
-\int_{\Rd}\int_{\Rd}\log(|x-y|)\rho(x)\rho(y)\,dx\,dy\leq \frac{1}{d}\int_{\Rd}\rho(x)\log(\rho(x))\,dx+C_0.
\end{equation}
\end{lemm}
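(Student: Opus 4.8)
The plan is to obtain \eqref{HLS} as the infinitesimal (conformal) limit of the sharp Hardy--Littlewood--Sobolev inequality, following the differentiation argument of Carlen and Loss. Since the statement is quoted from \cite{carrillo2018existence} one may of course simply cite it, but a direct derivation runs as follows. Start from Lieb's sharp HLS inequality in the diagonal case: for $0<\lambda<d$ and $p(\lambda)=\frac{2d}{2d-\lambda}$ there is a constant $C(d,\lambda)$, explicit as a ratio of Gamma functions, smooth in $\lambda$ on $[0,d)$ with $C(d,0)=1$, such that
\[
\int_{\Rd}\int_{\Rd}\frac{\rho(x)\rho(y)}{|x-y|^{\lambda}}\,dx\,dy\ \leq\ C(d,\lambda)\,\nrm{\rho}_{p(\lambda)}^2 .
\]
Write $I(\lambda)$ for the left-hand side and $R(\lambda)=C(d,\lambda)\nrm{\rho}_{p(\lambda)}^2$ for the right-hand side. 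Because $\rho$ is a probability density, $I(0)=R(0)=1$: the two functions agree at $\lambda=0$ and satisfy $I\leq R$ on $(0,d)$, so their one-sided derivatives at $0$ obey $I'(0^+)\leq R'(0^+)$, and the whole argument reduces to computing these two slopes.

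Differentiating under the integral sign gives $I'(0)=-\int_{\Rd}\int_{\Rd}\log|x-y|\,\rho(x)\rho(y)\,dx\,dy$, which is exactly the quantity on the left of \eqref{HLS}. For the right-hand side, write $R(\lambda)=C(d,\lambda)\exp\!\big(2\log\nrm{\rho}_{p(\lambda)}\big)$; using $\frac{d}{dp}\log\nrm{\rho}_p\big|_{p=1}=\int_{\Rd}\rho\log\rho\,dx$ (valid since $\int_{\Rd}\rho=1$) together with $p'(0)=\frac{1}{2d}$ and $C(d,0)=1$, one finds
\[
R'(0)\ =\ \frac{1}{d}\int_{\Rd}\rho\log\rho\,dx\ +\ C_0, \qquad C_0:=\partial_\lambda C(d,\lambda)\big|_{\lambda=0},
\]
where $C_0$ depends only on $d$. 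Combining $I'(0)\leq R'(0)$ with these two computations yields \eqref{HLS}.

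The analytic justifications around $\lambda=0$ are, I expect, the main obstacle. First, the factor $\frac{1}{d}$ is produced precisely by the conformal exponent $p(\lambda)$ and the limit $C(d,0)=1$; this is what forces the use of the \emph{sharp} constant (or at least one with the correct value and derivative at $\lambda=0$), since a generic non-sharp HLS constant blows up at the endpoint $p=1$ and would spoil the slope. Second, one must justify differentiating $I$ under the integral and the passage $I'(0^+)\leq R'(0^+)$, treating separately the trivial case $\int_{\Rd}\rho\log\rho=+\infty$, in which \eqref{HLS} holds automatically. Finally, the hypothesis $\log(1+|\cdot|^2)\rho\in L^1(\Rd)$ is exactly what makes the log-interaction integral well defined: for widely separated points the bound $\log|x-y|\lesssim \log(1+|x|^2)+\log(1+|y|^2)$ controls the far field, while the integrable logarithmic singularity on the diagonal controls the near field against an $L^{p}$ density.
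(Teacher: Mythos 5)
The paper does not actually prove this lemma: it is imported verbatim from \cite{carrillo2018existence} (Lemma 2.6 there), which in turn cites the classical literature, so there is no in-paper argument to compare against. Your derivation is the standard Carlen--Loss route of differentiating the sharp HLS inequality at the endpoint $\lambda=0$, and the computation itself is right: with $p(\lambda)=\tfrac{2d}{2d-\lambda}$ one has $p'(0)=\tfrac{1}{2d}$, Lieb's diagonal constant satisfies $C(d,0)=1$ and is smooth near $0$, $\tfrac{d}{dp}\log\nrm{\rho}_p\big\vert_{p=1}=\int\rho\log\rho$ for a probability density, and the two one-sided slopes are exactly the two sides of \eqref{HLS}. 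Since only the existence of $C_0$ is claimed, not its optimal value, this scheme is adequate in principle, and your observation that a non-sharp HLS constant would ruin the slope at $p=1$ is the right thing to worry about.

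There is, however, one concrete gap you do not flag. For the comparison $I'(0^+)\le R'(0^+)$ to carry any information, $R(\lambda)=C(d,\lambda)\nrm{\rho}_{p(\lambda)}^2$ must be finite for some small $\lambda>0$, i.e.\ $\rho$ must lie in $L^p(\Rd)$ for some $p>1$. Finiteness of the entropy does not guarantee this: a density behaving like $|x|^{-d}\bigl(\log(1/|x|)\bigr)^{-\alpha}$ near the origin with $\alpha>2$ is a probability density with finite entropy that belongs to no $L^p$ with $p>1$, and for such $\rho$ your right-hand side is identically $+\infty$ on $(0,d)$, so the difference quotients say nothing. The standard repair is to first prove \eqref{HLS} for bounded, compactly supported densities (where every step you outline is legitimate) and then pass to general $\rho$ by truncation and normalization, using monotone convergence for the positive near-field part of $-\log|x-y|$ on $\{|x-y|\le 1\}$ and the hypothesis $\log(1+|\cdot|^2)\rho\in L^1(\Rd)$ to control the far-field part; the same splitting also justifies identifying $I'(0^+)$, since $\lambda\mapsto(|x-y|^{-\lambda}-1)/\lambda$ is monotone in $\lambda$ for fixed $x,y$. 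With that approximation step added, and the trivial case $\int\rho\log\rho=+\infty$ dispatched as you note, the argument is complete.
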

\begin{lemm}{\cite[Lemma 2.9]{carrillo2018existence}}\label{Ktight}
Let $K(x) \in L^1_{loc}(\Rd)$ be positive, symmetric and satisfying
\[\lim_{|x|\to \infty} K(x) = +\infty.\]
Given a sequence $\left\{\mu_n\right\}_{n\geq 0} \subset \CalP(D)$, if
\[\liminf_{n\to\infty}\int_{\Rd}\int_{\Rd}K(x-y)\,d\mu_n(x)\,d\mu_n(y)<\infty,\]
then $\left\{\mu_n\right\}_{n\geq 0} \subset \CalP(D)$ is weakly-* relatively compact up to translations.
\end{lemm}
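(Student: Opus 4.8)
The plan is to reduce the statement to a tightness claim for suitably translated copies of the $\mu_n$ and then invoke Prokhorov's theorem. First I would pass to a subsequence along which the liminf is realized, so that without loss of generality $\int_{\Rd}\int_{\Rd} K(x-y)\,d\mu_n(x)\,d\mu_n(y) \leq M$ for some finite $M$ and every $n$. The single mechanism driving the argument is that the growth $K(x)\to+\infty$ converts the energy bound into a two-point concentration estimate: setting $L(R) := \inf_{|z|\geq R} K(z)$, positivity of $K$ lets me discard the integrand off the far region and apply Markov's inequality to the product measure $\mu_n\otimes\mu_n$, yielding
\[
(\mu_n\otimes\mu_n)\bigl(\{(x,y): |x-y|\geq R\}\bigr)\ \leq\ \frac{M}{L(R)},
\]
with $L(R)\to+\infty$ as $R\to\infty$ by hypothesis.

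The next step is to anchor each $\mu_n$ at a well-chosen center. Fixing $R_0$ so that $M/L(R_0)\leq 1/4$, the estimate gives $(\mu_n\otimes\mu_n)(\{|x-y|<R_0\})\geq 3/4$; writing this double integral as $\int_D \mu_n(B_{R_0}(x))\,d\mu_n(x)$ via Fubini and noting the integrand is bounded by $1$, I can select a point $a_n\in\supp{\mu_n}$ with $\mu_n(B_{R_0}(a_n))\geq 1/2$. This is the crucial passage from two-point information to one-point mass concentration.

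I would then upgrade the anchor into full tightness by a triangle-inequality splitting. For any $R>0$, if $x\in B_{R_0}(a_n)$ and $|y-a_n|\geq R_0+R$ then $|x-y|\geq R$, so
\[
\frac{M}{L(R)}\ \geq\ (\mu_n\otimes\mu_n)\bigl(\{|x-y|\geq R\}\bigr)\ \geq\ \mu_n(B_{R_0}(a_n))\,\mu_n\bigl(B_{R_0+R}(a_n)^c\bigr)\ \geq\ \tfrac12\,\mu_n\bigl(B_{R_0+R}(a_n)^c\bigr),
\]
whence $\mu_n(B_{R_0+R}(a_n)^c)\leq 2M/L(R)$. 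Letting $\tilde\mu_n$ denote the pushforward of $\mu_n$ under translation by $-a_n$, this reads $\tilde\mu_n(B_{R_0+R}(0)^c)\leq 2M/L(R)$, which tends to $0$ as $R\to\infty$ uniformly in $n$. Thus $\{\tilde\mu_n\}$ is tight, and Prokhorov's theorem gives a weakly-* convergent subsequence; since each $\tilde\mu_n$ is a translate of $\mu_n$, this is exactly weak-* relative compactness up to translations.

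The main obstacle I anticipate is conceptual rather than computational: correctly identifying the translation $a_n$ and the mechanism that turns control of the two-point product measure $\mu_n\otimes\mu_n$ into tightness of the one-point marginal. The anchoring step, finding a center carrying at least half the mass in a fixed ball $B_{R_0}$, is what makes the triangle-inequality splitting effective, and it is essential that $R_0$ be chosen uniformly in $n$, which the $n$-independent bound $M$ permits. Everything else is routine, modulo bookkeeping with the moving domains $D-a_n$ when interpreting the limit in $\CalP(\Rd)$.
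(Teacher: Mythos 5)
The paper states this lemma without proof, importing it verbatim from Lemma 2.9 of the cited reference, so there is no in-paper argument to compare against. Your proof is correct and self-contained: the Markov bound on $\mu_n\otimes\mu_n$ via $L(R)=\inf_{|z|\ge R}K(z)\to\infty$, the selection of an anchor $a_n$ carrying mass at least $1/2$ in a ball of $n$-independent radius $R_0$, and the triangle-inequality upgrade to uniform tightness of the translates followed by Prokhorov is the standard argument for this concentration statement, and passing to a subsequence realizing the liminf is the only sensible reading of the hypothesis.
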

%
%


\section{Non-existence of Global Minimizers: Domains with Boundaries}
\label{sect:non-exist}
In this section we investigate various possible scenarios when global minimizers of the energy cannot exist. First we treat the force-imbalance pathologies from free space which could make the energy unbounded from below. We then introduce a new non-existence phenomenon which results entirely from asymmetries in the domain and only occurs in domains with boundaries.


\subsection{Imbalance of Forces}
\label{subsect:imbalance}
In comparison with the spreading case in free space \eqref{nonexistcond1}, the following result gives a relation between the diffusion parameter $\nu$, the interaction potential $K$ and the effective volume dimension $\eff$ (as opposed to the dimension of the space $d$) which guarantees non-existence of ground states of $\CalE^\nu$ in the form of diffusion-dominated spreading throughout the domain.
\begin{thm}\label{NE} [Non-existence: diffusion-dominated regime]
Let Assumptions 1 and 2 hold with $V=0$ and $\eff$ defined by \eqref{effvoldim}. Then the energy $\CalE^\nu$ is not bounded below on $\CalP(D)$ provided there exists $\delta_0$ with 
$0<\delta_0<1$, $C_0\in \RR$ and $R_0>0$ such that 
\begin{equation}\label{lemm1i}
K(x) \leq 2(1-\delta_0)\eff\nu\log|x| +C_0, \qquad \text{for all} \quad |x|>R_0.
\end{equation}
\end{thm}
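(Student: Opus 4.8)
The plan is to exhibit a family of admissible measures along which $\CalE^\nu$ diverges to $-\infty$, exploiting the gap $\delta_0$ between the coefficient $2(1-\delta_0)\eff\nu$ in \eqref{lemm1i} and the ``critical'' coefficient $2\eff\nu$. Since $V=0$, the energy reduces to $\CalE^\nu[\mu]=\CalK[\mu]+\nu\CalS[\mu]$, so the competition is purely between interaction energy and entropy. The spreading family will be uniform densities on large balls intersected with $D$: for each large $r$ I choose $x_r\in D$ with $|D\cap B_r(x_r)|\geq \tfrac12 V_{_D}(r)$ (the supremum in \eqref{fDr} need not be attained, so I take a near-maximizer), set $A_r=D\cap B_r(x_r)$, $V_r=|A_r|$, and let $\mu_r=\frac{1}{V_r}\ind{A_r}$. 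By \eqref{effvolbound}, $V_r\geq \tfrac{C}{2}\,r^{\eff}$ for all large $r$.

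Next I would estimate the two contributions. The entropy is exact: since $\mu_r$ is uniform on $A_r$, $\CalS[\mu_r]=-\log V_r$, whence $\nu\CalS[\mu_r]\leq -\eff\nu\log r + \text{const}$. For the interaction energy I split the double integral over $A_r\times A_r$ at the diagonal scale $R_0$. On $\{|x-y|\leq R_0\}$ the bound \eqref{lemm1i} is unavailable, but local integrability of $K$ gives $\int_{A_r}\int_{A_r\cap\{|x-y|\le R_0\}}K(x-y)\,dy\,dx\le V_r\,\nrm{K}_{L^1(B_{R_0}(0))}$, contributing $O(1/V_r)\to 0$ after division by $V_r^2$. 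On $\{|x-y|> R_0\}$, since $x,y\in B_r(x_r)$ forces $|x-y|\le 2r$ and the coefficient $2(1-\delta_0)\eff\nu\ge 0$, the bound \eqref{lemm1i} together with monotonicity of $\log$ gives $K(x-y)\le 2(1-\delta_0)\eff\nu\log(2r)+C_0$; integrating over a region of measure at most $V_r^2$ and dividing by $2V_r^2$ yields $\CalK[\mu_r]\le (1-\delta_0)\eff\nu\log(2r)+\tfrac{C_0}{2}+o(1)$.

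Combining the two estimates, the leading terms are
\[
\CalE^\nu[\mu_r]\ \le\ (1-\delta_0)\eff\nu\log(2r)-\eff\nu\log r + C\ =\ -\delta_0\,\eff\nu\log r + C',
\]
which tends to $-\infty$ as $r\to\infty$, since $\delta_0>0$ and $\eff>0$; hence $\CalE^\nu$ is not bounded below on $\CalP(D)$. The role of $\delta_0$ is precisely to render the (logarithmic) attractive growth of $K$ strictly subcritical relative to the entropy gain $-\eff\nu\log V_r\sim-\eff\nu\log r$ generated by spreading over a set of volume $\sim r^{\eff}$.

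The essential step — and the point at which the effective volume dimension enters — is the construction of the spreading sets $A_r$ and the lower bound $V_r\gtrsim r^{\eff}$: the entropy decrease available to the sequence is governed by how much volume $D$ can hold inside a ball of radius $r$, which is exactly what $V_{_D}(r)$ and $\eff$ measure. The remaining estimates are routine, the only care being (i) the diagonal splitting, needed because $K$ is controlled only through $L^1_{loc}$ near the origin, and (ii) keeping $2(1-\delta_0)\eff\nu\ge 0$ so that $\log|x-y|\le\log(2r)$ may be used. (Implicitly $\eff>0$; when $D$ is bounded, $\eff=0$, the family saturates at the uniform measure on $D$ and the statement is vacuous.)
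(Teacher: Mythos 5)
Your proof is correct and follows essentially the same route as the paper's: uniform measures on near-maximizing sets $D\cap B_r(x_r)$ (using that the supremum in $V_{_D}(r)$ is attained up to a factor of $\tfrac12$), the entropy estimate $\nu\CalS[\mu_r]\leq -\eff\nu\log r + O(1)$ from \eqref{effvolbound}, and the splitting of the interaction integral at the scale $R_0$ with $L^1_{loc}$ control near the diagonal and the logarithmic bound with $|x-y|\leq 2r$ away from it. Your closing remark that the argument implicitly needs $\eff>0$ is a fair observation that the paper leaves unstated.
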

\begin{proof}
We will explicitly construct a sequence of measures which sends the energy to $-\infty$ by exploiting properties of the effective volume dimension. 
The supremum in the function $V_{_D}(r)$ defined in \eqref{fDr}
implies that for every $n\in \NN $ there exists $x_n\in D$ such that 
\[V_{_D}(n)>\left\vert D\cap B_n(x_n)\right\vert> \frac{1}{2} V_{_D}(n).\]
Define the sets $D_n := D\cap B_n(x_n)$ and the sequence of probability measures 
\[\mu_n = \frac{1}{|D_n|}\ind{D_n}.\]
We first bound above the interaction energy of $\mu_n$ for $n\geq R_0$: 
\begin{align*}
\CalK[\mu_n] &= \frac{1}{2|D_n|^2}\int_{D_n}\int_{D_n} K(x-y)\,dy\,dx \\
&=\frac{1}{2|D_n|^2}\int_{D_n}\left[\int_{D_n\cap B_{R_0}(x)}K(x-y)\,dy+ \int_{D_n\cap B^c_{R_0}(x)}K(x-y)\,dy\right]\,dx\\
&\leq \frac{1}{2|D_n|} \bigg[\nrm{K}_{L^1(B_{R_0}(0))}+\left\vert D_n\cap B^c_{R_0}\right\vert\Big(2(1-\delta_0)\eff\nu\log(2n) +C_0\Big)\bigg]\\[5pt]
&\leq (1-\delta_0)\eff\nu\log(n) +\tilde{C}_1
\end{align*}
where 
\[\tilde{C}_1 = \frac{1}{2|D_1|} \nrm{K}_{L^1(B_{R_0}(0))}+(1-\delta_0)\eff\nu\log(2) +\frac{1}{2}C_0.\]
Using the characterization \eqref{effvolbound} of the effective volume dimension, for $n>r'$ we have
\[|D_n| >\frac{1}{2} V_{_D}(n)\geq \frac{C}{2}n^{\eff},\]
and so the entropy of $\mu_n$ for $n>r'$ is bounded above as follows:
\[\CalS[\mu_n] = -\log|D_n| \leq -\log\left(\frac{C}{2}n^{\eff}\right)=-\eff\log\left(n\right)-\log\left(\frac{C}{2}\right).\]
%
%
%
Hence, for $n>\max\left\{R_0,r'\right\}$, the total energy of $\mu_n$ satisfies
\[\CalE^\nu[\mu_n]\ \ \leq\ \ (1-\delta_0)\eff\nu\log(n) - \eff\nu\log\left(n\right) +\tilde{C}_1-\nu\log\left(\frac{C}{2}\right)\ \ =\ \ -\delta_0 \eff\nu\log(n) + \tilde{C},\]\\[-10pt]
for $\tilde{C}\in \RR$. Hence, $\lim_{n\to \infty} \CalE^\nu[\mu_n] = -\infty$, which concludes the proof.
\end{proof}

The interpretation of Theorem \ref{NE} is that if attraction forces are too weak (at large distances), then diffusion dominates and spreading occurs. In free space ($D=\Rd$), the result is consistent with that derived by Carrillo et al. in \cite{carrillo2018existence} since $\eff=d$. In more general unbounded domains the relevant factor in the logarithmic bound \eqref{lemm1i} includes the effective volume dimension, more specifically the product $\eff \nu$. With the interpretation that the effective dimension specifies the number of orthogonal directions that independently extend to infinity in  $D$, the factor $\eff \nu$ suggests a minimal balance between the diffusion $\nu$ and the number of directions $\eff$ in which mass can escape to infinity, so that a global minimizer can exist.
\begin{rmrk}
\normalfont
As mentioned in Section \ref{sect:free-space}, non-existence due to aggregation-dominated contraction occurs in domains with boundaries under the same conditions as in free space (i.e. condition \eqref{NEaggreg}), since contraction to a point can occur in any domain $D\in \CalB^d$ satisfying $|D|>0$.
\end{rmrk}
%

\subsection{Escaping Mass Phenomena}

\label{subsect:escape}
As mentioned above, boundedness from below of the energy is not sufficient to guarantee existence of a minimizer in domains with boundaries. To begin this discussion, we present the following theorem, where \eqref{existcond1} is clearly satisfied, hence $\inf\CalE^\nu>-\infty$, yet no energy minimizer exists.
\begin{thm}\label{COM1}
Let $D = [0, +\infty)$, $K(x) = \frac{1}{2}x^2$ and $V = 0$. Then the energy $\CalE^\nu$ has no minimizers.
\end{thm}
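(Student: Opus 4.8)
The plan is to derive a contradiction from the translation invariance of the energy together with the full-support property of minimizers from Theorem \ref{suppmin}. First I would record the structural facts specific to this example. Since $K(x-y) = \tfrac12(x-y)^2$ depends only on the difference $x-y$, the interaction energy $\CalK$ is invariant under translations; indeed the quadratic form gives the explicit identity $\CalK[\mu] = \tfrac12\var(\mu)$, where $\var(\mu) = M_2(\mu) - \com(\mu)^2$, because $\int\int (x-y)^2\,d\mu(x)\,d\mu(y) = 2\,\var(\mu)$. The entropy $\CalS$ is likewise translation invariant, and $V=0$, so the whole functional $\CalE^\nu[\mu] = \tfrac12\var(\mu) + \nu\CalS[\mu]$ is unchanged under shifts (this representation also makes boundedness below transparent). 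Crucially, $D = [0,+\infty)$ is invariant under translation to the right: if $\mu$ is supported in $[0,\infty)$, so is its rightward shift.

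Next I would set up the contradiction. Suppose a global minimizer $\gmin$ exists. A quick check gives $\gmin\in\CalP^{ac}_2(D)$: by Remark \ref{ac-gmin} it is absolutely continuous, and finiteness of $\CalE^\nu[\gmin]$ together with the Gaussian entropy bound $\CalS[\mu]\ge -\tfrac12\log(2\pi e\,\var(\mu))$ (which controls the variance against the nonnegative interaction term) forces $M_2(\gmin)<\infty$. Since $K(x)=\tfrac12 x^2$ is nonnegative with power-law growth ($p_K = 2$), Theorem \ref{suppmin} applies with $p=2$ to the global (hence $\CalW_2$-$r$ local) minimizer $\gmin$ and yields $\supp{\gmin} = D = [0,+\infty)$.

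Now I would translate. Define $\gmin_1(x) := \gmin(x-1)$, the density shifted one unit to the right; it is a probability density supported in $[1,\infty)\subset D$, so $\gmin_1\in\CalP^{ac}_2(D)$, and by the translation invariance recorded above $\CalE^\nu[\gmin_1] = \CalE^\nu[\gmin] = \inf \CalE^\nu$. Hence $\gmin_1$ is itself a global minimizer, so Theorem \ref{suppmin} again forces $\supp{\gmin_1} = D$. But $\supp{\gmin_1} = [1,+\infty)\subsetneq [0,+\infty) = D$, a contradiction. Therefore no global minimizer can exist.

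I expect the only delicate point to be the justification that a putative minimizer has finite second moment, so that Theorem \ref{suppmin} is applicable; the conceptual core — that rightward translations preserve both the energy and membership in $\CalP^{ac}_2(D)$ while destroying the full-support property — is then immediate. This is exactly the escaping-mass mechanism: minimizing sequences can slide arbitrarily far to the right at no energy cost, so the infimum (the free-space Gaussian energy) is approached but never attained on $[0,+\infty)$.
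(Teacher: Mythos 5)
Your proof is correct, and it takes a genuinely different route from the paper's. The paper argues through the Euler--Lagrange equation: any minimizer must be a fixed point of the map \eqref{gibbstat}, hence a truncated Gaussian $\rho_c$, and an explicit computation of $c\mapsto\CalE^\nu[\rho_c]$ in terms of error functions shows this function is strictly decreasing, so no critical point of the restricted family (and hence no minimizer, indeed no critical point with full support) exists. Your argument replaces all of that computation with two structural observations --- translation invariance of $\CalK+\nu\CalS$ and the fact that $D=[0,\infty)$ is stable under rightward shifts --- combined with Theorem \ref{suppmin}: the shifted minimizer is again a global minimizer but is supported on $[1,\infty)\subsetneq D$, contradicting full support. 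This is shorter, more conceptual, and generalizes immediately to any potential satisfying the hypotheses of Theorem \ref{suppmin} and any domain admitting a semigroup (but not a group) of translations into itself, i.e.\ it isolates the escaping-mass mechanism cleanly. What the paper's computation buys in exchange is the explicit energy profile \eqref{COMenergy1}, which is reused in Theorem \ref{COM2} and Figure \ref{energy_quad_attract}, and the stronger conclusion that there is no critical point at all. Two small remarks. First, as written your argument rules out global minimizers only, since local minimality need not transfer to the translate by $1$; but the theorem as stated (and the section it sits in) concerns global minimizers, and in any case the argument upgrades easily: if $\gmin$ is a $\CalW_2$-$r$ local minimizer, translate by $\epsilon<r$, note $\CalW_2(\gmin,\gmin_\epsilon)=\epsilon$ so that $B_2(\gmin_\epsilon,r-\epsilon)\subset B_2(\gmin,r)$ and $\gmin_\epsilon$ is a $\CalW_2$-$(r-\epsilon)$ local minimizer with support $[\epsilon,\infty)$, still a contradiction. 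Second, your justification that a putative minimizer lies in $\CalP^{ac}_2(D)$ (needed to invoke Theorem \ref{suppmin} with $p=2$) via the Gaussian maximum-entropy bound is at least as careful as the paper's one-line assertion ``due to the growth of $K$''; if one wanted full rigor here one would note that when $M_2(\mu)=+\infty$ the negative part of the entropy must be controlled (e.g.\ by a Carleman-type estimate) so that $\CalK[\mu]=+\infty$ forces $\CalE^\nu[\mu]=+\infty$, but this issue is common to both proofs.
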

\begin{proof}
\noindent The energy is given by
\begin{equation}\label{en}
\CalE^\nu[\rho] = \frac{1}{4}\int_0^\infty\int_0^\infty (x-y)^2\,\rho(x)\rho(y)\,dx\,dy+\nu\int_0^\infty\rho(x)\log(\rho(x))\,dx.
\end{equation}
We proceed by contradiction. Assume that a minimizer $\gmin$ of \eqref{en} exists. Then $\gmin\in \CalP_2^{ac}(D)$ due to the growth of $K$. Since $\gmin$ has $\supp{\gmin} = D$ by Theorem \ref{suppmin} and $\gmin$ satisfies the Euler-Lagrange equation \eqref{ELcond}, by Corollary \ref{fixedpointmap} one has
\[\gmin(x) = Z^{-1}\exp\left(-\frac{K*\gmin(x)}{\nu}\right),\]
where 
\[Z = \int_0^\infty \exp\left(-\frac{K*\gmin(x)}{\nu}\right)\,dx.\] 
From an elementary calculation,  
\begin{align*}
K*\gmin(x) &= \frac{1}{2}\int_0^\infty(x-y)^2\gmin(y)\,dy\\ 
&= \frac{1}{2}(x-M_1(\gmin))^2-\frac{1}{2}\left(M_1(\gmin)^2-M_2(\gmin)\right),
\end{align*}
hence $\gmin = \rho_c$ for some $c\in \RR$, where
\[\rho_c(x) = A(c)\,\exp\left(-\frac{1}{2\nu}\left(x-c\right)^2\right)\]
is a shifted and truncated Gaussian. Here $c = M_1(\gmin)$ and $A(c)$ is the normalization constant
\[A(c) = \frac{2/\sqrt{2\pi\nu}}{1+\text{erf}(c/\sqrt{2\nu})},\]
where $\text{erf}(x)$ denotes the error function.

Let $\Gamma_c = \left\{\rho_c\right\}_{c\geq 0}$ be the family of shifted and truncated Gaussians on $[0,+\infty)$. Then since $\gmin\in\Gamma_c$ and $\gmin$ is a critical point of $\CalE^\nu$ over $\CalP(D)$, $\gmin$ is a critical point of $\CalE^\nu$ over $\Gamma_c$ as well, and so the function $c\to\CalE^\nu[\rho_c]$ has a critical point at some $c \in \RR$. By direct calculation of $\CalE^\nu[\rho_c]$, we now show that no such critical point exists.

For the entropy, we have
\begin{align*}
\CalS[\rho_c] &= A(c) \int_0^\infty \exp\left(-\frac{1}{2\nu}\left(x-c\right)^2\right)\left(-\frac{1}{2\nu}\left(x-c\right)^2+\log(A(c))\right)\,dx\\
&=\log(A(c))-\frac{1}{2\nu}\underbrace{A(c)\int_0^\infty (x-c)^2\exp\left(-\frac{1}{2\nu}(x-c)^2\right)\,dx}_{I}.\\
\intertext{For the interaction energy, we get}
\CalK[\rho_c] &= \frac{1}{4}A(c)^2\int_0^\infty\int_0^\infty(x-y)^2 \exp\left\{-\frac{1}{2\nu}(x-c)^2-\frac{1}{2\nu}(y-c)^2\right\} \,dx\,dy\\
&= \frac{1}{2}\underbrace{A(c)\int_0^\infty (x-c)^2\exp\left(-\frac{1}{2\nu}(x-c)^2\right)\,dx}_{I}\\ 
&\mathbin{\color{white}===}-\frac{1}{2}\left[A(c)\int_0^\infty (x-c)\exp\left(-\frac{1}{2\nu}(x-c)^2\right)\,dx\right]^2\\
&= \frac{1}{2}I - \frac{\nu^2}{2}\left(A(c) \exp\left(-\frac{c^2}{2\nu}\right)\right)^2.
\end{align*}
The total energy $\CalE^\nu[\rho_c]$ then reduces to
\begin{align*}
\CalE^\nu[\rho_c] &= \CalK[\rho_c]+\nu\CalS[\rho_c]\\
&= \nu\log(A(c)) -\frac{\nu^2}{2}\left(A(c) \exp\left(-\dfrac{c^2}{2\nu}\right)\right)^2\\
&= \nu\log\left(\frac{2}{\sqrt{2\pi\nu}}\right)-\nu\log\left(1+\text{erf}\left(\frac{c}{\sqrt{2\nu}}\right)\right)
-\frac{\nu}{4}\left(\dfrac{\dfrac{2}{\sqrt{\pi}}\exp\left(-\dfrac{c^2}{2\nu}\right)}{1+\text{erf}\left(\dfrac{c}{\sqrt{2\nu}}\right)}\right)^2.
\end{align*}
By letting $\tilde{c} = c/{\sqrt{2\nu}}$, one can also write:
\begin{equation}\label{COMenergy1}
\CalE^\nu[\rho_c]=\nu\log\left(\frac{2}{\sqrt{2\pi\nu}}\right) - \nu\left(f(\tilde{c})+\frac{1}{4}f'(\tilde{c})^2\right),
\end{equation}
where 
\[f(\tilde{c}) = \log\left(1+\text{erf}(\tilde{c})\right).\]
Since $c\to \CalE^\nu[\rho_c]$ has a critical point (by hypothesis) and is a smooth function, we have
\begin{equation}\label{COMcont}
\frac{d}{dc}\CalE^\nu[\rho_c] = -\sqrt{\frac{\nu}{2}} f'(\tilde{c})\left(1+\frac{1}{2}f''(\tilde{c})\right) = 0, \\[5pt]
\end{equation}
for some $\tilde{c}\in\RR$. However, $f' > 0$ and $\min f'' = -\frac{4}{\pi}>-2$ together imply that \eqref{COMcont} has no solutions. This contradicts the assumption that $\CalE^\nu$ has a critical point. 

Figure \ref{energy_quad_attract} shows the monotonically decreasing profile of $c\to \CalE^\nu[\rho_c]$ together with energy plots with an added external potential $V(x)=gx$, which will be addressed in Theorem \ref{COM2}.  In particular, it shows the case $g=0$ corresponding to \eqref{COMenergy1}.
\end{proof}

\begin{figure}[htp]
\centering
\includegraphics[trim={10 5 40 20},clip,width=0.8\textwidth]{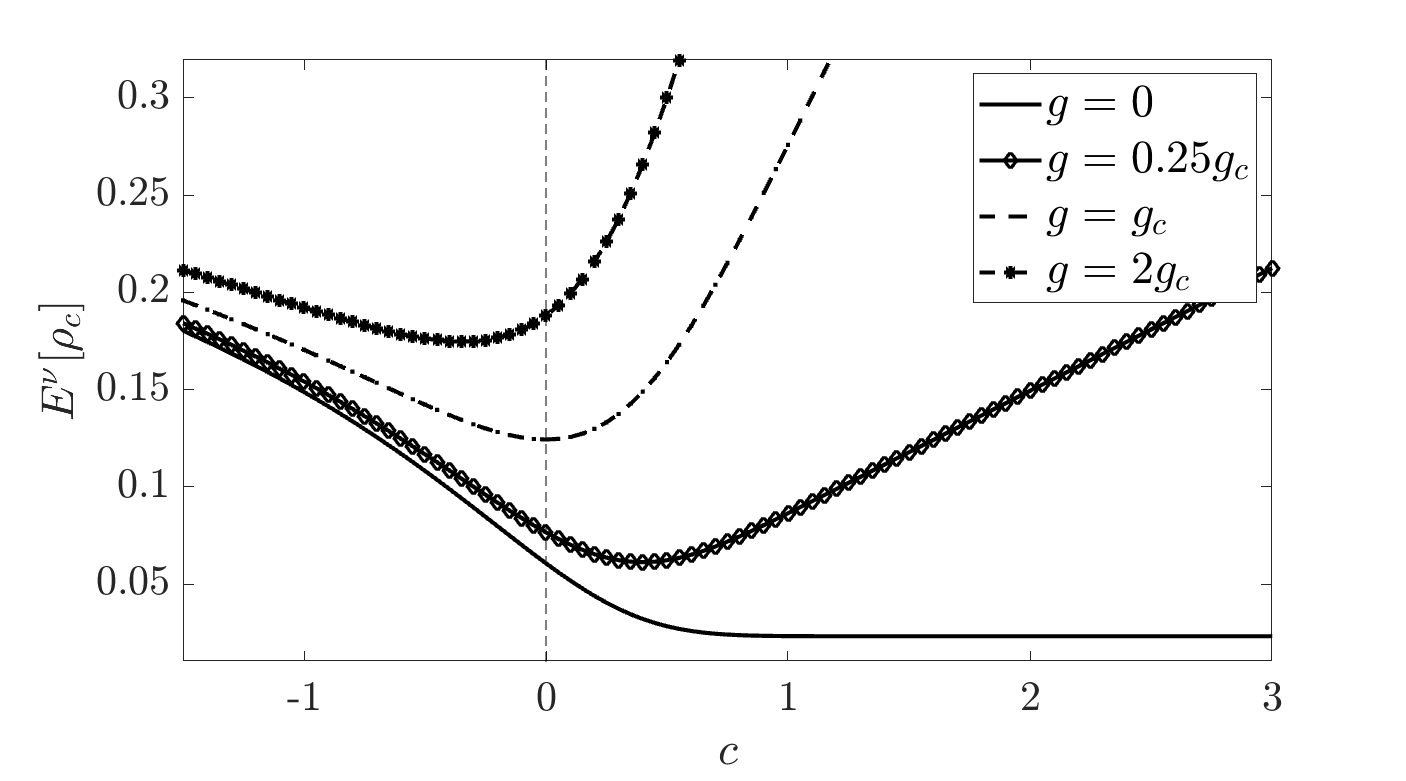} \\
\caption{Plots of the energy $\CalE^\nu$ given by equation \eqref{energy_quad_attract_formula}, evaluated on the set of truncated Gaussians $\Gamma_c$, for several values of $g$. For zero gravity ($g=0$) -- see also \eqref{COMenergy1} -- we see that the energy is monotonically decreasing, and hence has no local minimum, while for $g>0$ there exists a unique minimum value of $c$. Moreover, for $g=g_c:=\sqrt{\frac{2\nu}{\pi}}$ we see that $c=0$, as in Remark \ref{rmk:gc}, which implies that the minimizer $\rho_c$ is a half-Gaussian.}
\label{energy_quad_attract}
\end{figure}

The non-existence result in Theorem \ref{COM1} is an example of a more general phenomenon which we refer to as the \textit{escaping mass phenomenon}. Sequences such as $\Gamma_c$ in Theorem \ref{COM1} are \textit{escaping} in the sense that the centre of mass $\com\left(\rho_c\right)$ reaches infinity without the measures vanishing in the traditional sense. This phenomenon manifests in dynamics as the persistent, metastable translation of the centre of mass of the swarm (see Remark \ref{COMdyn}). 

Geometrically, asymmetries in $D$ cause the energy $\CalE^\nu$ to lose translation invariance, and so the same tight-up-to-translations arguments from free space do not apply. To enforce the existence of a global minimizer, we can add a confining potential $V$ and exploit any symmetries within $D$. This process is described in Section \ref{sect:existence}. To complete the discussion on escaping mass, Theorem \ref{domassymthm} below provides a necessary condition for the existence of a minimizer which comes as a direct corollary of the Euler-Lagrange equation.

%
\begin{thm}\label{domassymthm}
Let Assumptions 1 and 2 be satisfied and let $K,V\in W_{loc}^{1,1}(D)$. Then if $\gmin$ is a critical point of the energy $\,\CalE^\nu$ with $\supp{\gmin} = D$, then $\gmin$ satisfies 
\begin{equation}\label{boundaryV}
\nu\int_{\partial D} n(x)\,\gmin(x)\,dS(x) = -\int_D  \gmin(x)\nabla V(x)\,dx,
\end{equation}
%
\end{thm}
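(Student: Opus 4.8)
The plan is to differentiate the Euler--Lagrange equation \eqref{ELcond}, multiply through by $\gmin$, and integrate over $D$. Because $\supp{\gmin}=D$, I would first invoke Corollary \ref{fixedpointmap}, which represents the critical point as $\gmin = Z(\gmin)^{-1}\exp\!\big(-(K*\gmin+V)/\nu\big)$; this shows $\gmin$ is strictly positive on $D$, so \eqref{ELcond} in fact holds pointwise for Lebesgue-a.e.\ $x\in D$. Under the hypothesis $K,V\in W^{1,1}_{loc}(D)$ every non-constant term in \eqref{ELcond} is weakly differentiable, with $\nabla(K*\gmin)=(\nabla K)*\gmin$ and $\nabla\log\gmin=\nabla\gmin/\gmin$, so taking the weak gradient of \eqref{ELcond} yields
\[
(\nabla K)*\gmin(x) + \nu\,\frac{\nabla\gmin(x)}{\gmin(x)} + \nabla V(x) = 0 \quad\text{a.e.\ on } D.
\]
Multiplying by $\gmin(x)$ gives the clean identity
\[
\gmin(x)\big((\nabla K)*\gmin\big)(x) + \nu\,\nabla\gmin(x) + \gmin(x)\,\nabla V(x) = 0,
\]
which I would then integrate over $D$ to produce the three terms appearing (after rearrangement) in \eqref{boundaryV}.

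Two of these three terms simplify immediately. Writing the interaction term as the double integral $\int_D\int_D \gmin(x)\,\nabla K(x-y)\,\gmin(y)\,dy\,dx$ and using that Assumption \ref{assumKV}(iii) forces $\nabla K$ to be odd (differentiating $K(z)=K(-z)$ gives $\nabla K(-z)=-\nabla K(z)$), the integrand is antisymmetric under the exchange $x\leftrightarrow y$; by Fubini the integral therefore equals its own negative and vanishes. For the entropy term I would apply the divergence theorem componentwise, $\int_D \partial_i\gmin\,dx = \int_{\partial D}\gmin\,n_i\,dS$, so that $\int_D \nabla\gmin\,dx = \int_{\partial D}\gmin(x)\,n(x)\,dS(x)$, where Assumption \ref{assumD}(ii) guarantees $n(x)$ is well-defined a.e.\ on $\partial D$. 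Collecting the surviving terms gives $\nu\int_{\partial D}n(x)\,\gmin(x)\,dS(x) + \int_D \gmin(x)\,\nabla V(x)\,dx = 0$, which is precisely \eqref{boundaryV}.

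The main obstacle is not the algebra but its analytic justification, concentrated in three points. First, differentiating \eqref{ELcond}: passing the gradient inside the convolution to obtain $(\nabla K)*\gmin$ is standard for $\nabla K\in L^1_{loc}$ convolved against a probability measure, but must be verified, and it is the strict positivity of $\gmin$ from Corollary \ref{fixedpointmap} that makes $\nabla\log\gmin$ meaningful. Second, one needs absolute integrability to legitimize the Fubini symmetrization of the interaction term and to guarantee that every integral in \eqref{boundaryV} converges. Third, and most delicate, is the applicability of the divergence theorem when $D$ is unbounded, which requires enough decay of $\gmin$ at infinity that no contribution escapes "at infinity." I expect this decay to be supplied directly by the fixed-point form $\gmin(x)=Z^{-1}\exp\!\big(-(K*\gmin(x)+V(x))/\nu\big)$, which decays like a Gaussian wherever $K*\gmin+V$ grows, thereby controlling both the boundary integral and the volume integral and justifying the integration by parts.
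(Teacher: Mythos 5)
Your proposal follows essentially the same route as the paper's proof: take the (weak) gradient of the Euler--Lagrange equation, multiply by $\gmin$, integrate over $D$, kill the interaction term by the antisymmetry of $\nabla K$ under $x\leftrightarrow y$, and convert $\int_D\nabla\gmin\,dx$ into the boundary integral via the divergence theorem (the paper handles the unbounded-domain issue you flag by exhausting $D$ with bounded smooth sets $A_n$ and invoking trace theorems for finiteness, rather than by appealing to decay from the fixed-point form). The argument is correct and the differences are only in how the analytic caveats are dispatched.
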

\begin{proof}
Assume that $\gmin$ is such a critical point. 
%
%
%
%
Since $K,V\in W_{loc}^{1,1}(D)$, $\gmin$ is differentiable almost everywhere. Taking the gradient of both sides of the Euler-Lagrange equation \eqref{ELcond} and integrating against $\gmin(x)\,dx$ then gives us 
\begin{equation}\label{domassymthm_grad}
\nu\int_D \nabla \gmin(x) \,dx = -\underbrace{\int_D \gmin(x) \nabla K*\gmin(x)\, dx}_{=:I}- \int_D \gmin(x)\nabla V(x)\,dx.
\end{equation}
The anti-symmetry of $\nabla K$ implies that
\[I = \int_D\int_D \nabla K(x-y)\,\gmin(y)\,\gmin(x)\,dy\,dx = -\int_D\int_D \nabla K(y-x)\,\gmin(y)\,\gmin(x)\,dy\,dx =-I,\]
%
and hence $I=0$. To integrate the left-hand side of \eqref{domassymthm_grad}, we use the divergence theorem. Consider a sequence of bounded sets $A_n \subset D$ with smooth boundary such that $\lim_{n\to\infty} A_n = D$ and $\lim_{n\to\infty} \partial A_n = \partial D$. Then for any fixed $\vec{a}\in \Rd$,
\begin{align*}
\vec{a}\cdot \int_{A_n}\nabla \gmin(x)\,dx &= \int_{A_n}\nabla\cdot\left(\gmin(x)\vec{a}\right)\,dx\\ 
&= \int_{\partial A_n}n(x)\cdot \left(\gmin(x)\vec{a}\right)\,dS(x)\\ 
&= \vec{a}\cdot\int_{\partial A_n}n(x)\,\gmin(x)\,dS(x).
\end{align*}
Since this holds for any $\vec{a}\in \Rd$, for each $n$ we have 
\[\int_{A_n} \nabla \gmin(x)\,dx = \int_{\partial A_n} n(x)\gmin(x)\,dS(x),\]
and so
%
%
%
\[\int_D \nabla \gmin(x)\,dx = \lim_{n\to \infty} \int_{A_n} \nabla \gmin(x)\,dx = \lim_{n\to \infty} \int_{\partial A_n} n(x)\gmin(x)\,dS(x) = \int_{\partial D} n(x)\, \gmin(x)\,dS(x).\]
Since $n(x)$ is defined for almost every $x\in \partial D$ (Assumption \ref{assumD}) and $\gmin\in L^1(D)$, we can apply classical trace theorems \cite[Ch. 5]{evans10} to conclude that the right-most integral in \eqref{domassymthm_grad} is finite. This yields the result.
\end{proof}
\begin{rmrk}\label{nonexist}
\normalfont
Theorem \ref{domassymthm} indicates that minimizers of $\CalE^\nu$ cannot exist under zero external potential in a large class of domains (see the Examples below). Indeed, $V=0$ implies that the right-hand side of \eqref{boundaryV} is zero, yet the left-hand side of \eqref{boundaryV} is nonzero: the formula for critical points $\gmin$ with $\supp{\gmin}=D$,  
\[\gmin(x) = Z^{-1}\exp\left(-\frac{K*\gmin(x)}{\nu}\right),\]
implies that $\gmin(x)>0$ for all $x\in \partial D$. For this reason, \eqref{boundaryV} cannot hold in many infinite domains. 
\end{rmrk}
\vspace{10pt}
\begin{rmrk}\label{COMdyn}
\normalfont Condition \eqref{boundaryV} relates to the dynamics of the aggregation-diffusion model \eqref{aggdD} in the following way. Consider the evolution in time of the centre of mass:
\begin{align*}
\frac{d}{dt}\mathcal{C}(\munu) &=\int_Dx\left(\ppt\rhonu(x)\right)\, dx\\
&= \int_Dx \nabla \cdot\bigg(\rhonu(x)\Big(\nabla K*\rhonu(x)+\nabla V(x)\Big)+\nu\nabla \rhonu(x)\bigg)\,dx\\
&= -\int_D\nabla K*\rhonu(x) \rhonu(x)\,dx-\int_D\nabla V(x)\rhonu(x)\,dx-\nu\int_{D} \nabla \rho^{\nu}_t(x)\,dx\\
\intertext{(integrating by parts and utilizing the boundary conditions)}
&=-\int_D \nabla V(x)\rhonu(x)\,dx-\nu\int_{\partial D} n(x) \rho^{\nu}_t(x) \,dS(x).
\end{align*}
For $V=0$, this is exactly
\begin{equation}\label{comtrans}
\frac{d}{dt}\com\left(\munu\right) = -\nu\int_{\partial D} n(x) \rho^{\nu}_t(x) \,dS(x),
\end{equation}
hence the swarm translates in the direction opposite the average outward normal vector with speed proportional to the mass along the boundary, weighted by $\nu$. Unless the domain is bounded or symmetric enough that mass may be distributed along the boundary in such a way that the right-hand side of \eqref{comtrans} is zero, translation will occur indefinitely, further justifying the terminology ``escaping-mass phenomenon''. Clearly this takes effect as soon as $\nu>0$.
\end{rmrk}
\subsubsection{Examples}

\hspace{12pt} The following are a few example domains where a minimizer $\gmin$ cannot exist by the argument in Remark \ref{nonexist}.
\begin{enumerate}
\item Half-space: Here $D = \Rd_+:= \RR^{d-1}\times [0,\infty)$ where $n(x) = -\hat{e}_d$ is constant for all $x\in \partial D$. This gives
\[\int_{\partial D}n(x)\,\gmin(x)\,dS(x) \,=\, -\hat{e}_d\int_{\RR^{d-1}}\gmin(x)\,dx_1\dots \,dx_{d-1} \,<\, 0.\]
Note that Theorem \ref{COM1} demonstrates this case for $d=1$.
\item Wedge domain: $D = \left\{x\in \RR^2 \txt{0.2}{:} 0\leq x_2\leq \tan(\phi) x_1\right\}$ for $\phi\in (0,\pi/2)$. Then 
\[\int_{\partial D}n(x)\,\gmin(x)\,dS(x) \,=\, (N_1,N_2)\]
where 
\[N_1 = -\sin(\phi)\int_0^\infty \gmin(z,\tan(\phi)z)\,dz<0.\]
\item Paraboloid: Let $x = (x_1,\dots,x_{d-1},x_d) = (x',x_d)\in \Rd$ and define  
\[D = \left\{x\in \Rd \txt{0.2}{:} x_d \geq |x'|^2\right\}.\]
Then $n(x) = \dfrac{1}{\sqrt{|x'|^2+\frac{1}{4}}}\left(x', -\frac{1}{2}\right)$ and so 
\[\int_{\partial D} n(x)\,\gmin(x) \, dS(x) = (N', N_d)\]
where again $N_d$ cannot be zero.
\end{enumerate}
In the next section we establish a relation between the 
domain geometry and the external potential $V$, motivated by Theorem \ref{domassymthm}, that ensures existence of a minimizer. 


\section{Existence of Global Minimizers}
\label{sect:existence}


\subsection{A sharp existence condition for certain domains} 
\label{subsect:tight-exist}

Recall that in free space we have a sharp condition for existence of global minimizers. More specifically, in free space the sharp condition \eqref{existcond1} (resp. \eqref{nonexistcond1})  determines whether the energy $\CalE^\nu$ is bounded (resp. unbounded) below, and boundedness from below is all that is needed to guarantee existence of minimizers in free space. Here we show that for a wide class of domains with boundaries (i.e. those of the form \eqref{Fdomain}), the analogous condition \eqref{kbound} (resp. \eqref{lemm1i}) for granting boundedness (resp. unboundedness) from below of the energy is also sharp, and depends explicitly on the effective volume dimension $\eff$ of the domain. 
\begin{thm}\label{tightbound}
Let $D\subset \Rd$ have the form \eqref{Fdomain} and $\nu>0$. Suppose that $V=0$ and $K$ is positive and satisfies Assumption \ref{assumKV}. In addition, suppose that for some $\delta_1>0$ and $C_1\in \RR$,
\begin{equation}\label{kbound}
K(x) \geq 2(1+\delta_1)\eff\nu\log|x|+C_1\txt{0.4}{for all} x\in D-D,
\end{equation}
where $D-D := \left\{x-y \in \Rd\txt{0.2}{:} x,y\in D\right\}$. Then the energy $\CalE^\nu$ is bounded below on $\CalP(D)$. Moreover,  there exists a global minimizer $\gmin \in \CalP^{ac}(D)$ of $\CalE^\nu$, that is
\[\CalE^\nu[\gmin] = \inf_{\rho\in \CalP(D)} \CalE^\nu[\rho] >-\infty.\]
\end{thm}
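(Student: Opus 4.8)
The plan is to prove the two assertions in turn: first that $\CalE^\nu$ is bounded below on $\CalP(D)$, and then, by the direct method, that the infimum is attained. Throughout I would write a point of $D = F\times\RR^{d-m}$ as $x=(x_F,x_\infty)$ with $x_F\in F\subset\RR^m$ and $x_\infty\in\RR^{d-m}$, recalling that $\eff=d-m$. For $\mu\in\CalP^{ac}(D)$ with density $\rho$, let $\rho_\infty(x_\infty)=\int_F\rho(x_F,x_\infty)\,dx_F$ be the density of the marginal of $\mu$ on $\RR^{d-m}$, and set $\CalS_\infty:=\int_{\RR^{d-m}}\rho_\infty\log\rho_\infty$.

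For boundedness below, fix $\mu$ with $\CalE^\nu[\mu]<+\infty$ (otherwise there is nothing to check); then $\mu\in\CalP^{ac}(D)$ and, since $K\geq 0$, both $\CalK[\mu]\geq 0$ and $\CalS[\mu]=(\CalE^\nu[\mu]-\CalK[\mu])/\nu$ are finite. Using \eqref{kbound} I would first reduce the interaction energy to a logarithmic one, $\CalK[\mu]\geq (1+\delta_1)\eff\nu\iint\log|x-y|\,d\mu\,d\mu+\tfrac{C_1}{2}$. The key geometric step is $|x-y|\geq|x_\infty-y_\infty|$, whence $\log|x-y|\geq\log|x_\infty-y_\infty|$ and the double integral descends to the marginal, $\iint\log|x-y|\,d\mu\,d\mu\geq\iint_{\RR^{d-m}}\log|u-v|\,\rho_\infty(u)\rho_\infty(v)\,du\,dv$. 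Applying the logarithmic HLS inequality (Lemma \ref{lemma:HLS}) on $\RR^{d-m}$, i.e. in dimension exactly $\eff$, converts this into a bound by the marginal entropy and gives $\CalK[\mu]\geq-(1+\delta_1)\nu\,\CalS_\infty+C'$ for some constant $C'$. Separately, since $F$ is compact the conditional entropy over $F$ is bounded below by $-\log|F|$, yielding the chain-rule estimate $\CalS[\mu]\geq\CalS_\infty-\log|F|$. Combining these, I obtain two lower bounds, $\CalE^\nu[\mu]\geq\nu\CalS_\infty-\nu\log|F|$ (from $\CalK\geq 0$) and $\CalE^\nu[\mu]\geq-\delta_1\nu\CalS_\infty+C''$ (from the HLS bound). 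As these are respectively increasing and decreasing affine functions of $\CalS_\infty$, their maximum is bounded below uniformly in $\mu$, so $\inf_{\CalP(D)}\CalE^\nu>-\infty$.

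For existence I would run the direct method. Let $\{\mu_n\}$ be a minimizing sequence, $\CalE^\nu[\mu_n]\to E^\ast:=\inf\CalE^\nu>-\infty$. The two displayed bounds force the marginal entropies $\CalS_\infty$ of $\mu_n$, and hence (via the chain-rule estimate together with $\CalS\leq\CalE^\nu/\nu$) the entropies $\CalS[\mu_n]$, to be bounded; consequently $\CalK[\mu_n]=\CalE^\nu[\mu_n]-\nu\CalS[\mu_n]$ is bounded. Since $K$ is positive, symmetric, and by \eqref{kbound} satisfies $K\to+\infty$ on $D-D$, Lemma \ref{Ktight} provides a subsequence and translations $a_n\in\Rd$ with $\mu_n(\cdot+a_n)$ weakly-$*$ convergent, hence tight. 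Tightness forces the $F$-components of $a_n$ to stay bounded (the mass of $\mu_n(\cdot+a_n)$ in the first $m$ coordinates lies in the translate $F-a_n^F$, which must remain in a fixed compact set), so I may replace $a_n$ by its $\RR^{d-m}$-component. Because $D=F\times\RR^{d-m}$ and $V=0$, translation in the $\RR^{d-m}$ directions preserves both the domain and the energy; thus $\tilde\mu_n:=\mu_n(\cdot+a_n)\in\CalP(D)$ with $\CalE^\nu[\tilde\mu_n]=\CalE^\nu[\mu_n]$, and $\tilde\mu_n\overset{*}{\rightharpoonup}\gmin$ for some $\gmin\in\CalP(D)$. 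Weak-$*$ lower semicontinuity (Lemma \ref{lowerSC}) then gives $\CalE^\nu[\gmin]\leq\liminf\CalE^\nu[\tilde\mu_n]=E^\ast$, so $\gmin$ is a global minimizer, and it lies in $\CalP^{ac}(D)$ by Remark \ref{ac-gmin}.

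The main obstacle is the boundedness-below step, and specifically obtaining the logarithmic HLS inequality with the effective exponent $\tfrac1\eff$ in place of $\tfrac1d$: this is exactly what makes the threshold in \eqref{kbound} scale with $\eff\nu$ and renders it sharp against Theorem \ref{NE}. The projection inequality $|x-y|\geq|x_\infty-y_\infty|$ combined with the conditional-entropy bound over the compact factor $F$ is the device that accomplishes this dimension reduction, and verifying the integrability hypothesis of Lemma \ref{lemma:HLS} for the marginal $\rho_\infty$ (which follows from finiteness of $\CalK[\mu]$ and the log-growth of $K$) is the one technical point needing care. A secondary subtlety is justifying that the translations produced by Lemma \ref{Ktight} may be taken within the invariant directions $\RR^{d-m}$, so that the recentred measures stay admissible with unchanged energy.
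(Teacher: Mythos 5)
Your proposal is correct and follows essentially the same route as the paper: the conditional-entropy bound $\CalS[\mu]\geq\CalS[\rho_{_F}]-\log|F|$ over the compact factor, the projection inequality $|x-y|\geq|\tilde{x}-\tilde{y}|$ feeding the logarithmic HLS inequality in dimension $d-m=\eff$, and then Lemma \ref{Ktight} plus translation invariance in the unbounded coordinates to recenter a minimizing sequence. The only cosmetic difference is that you conclude boundedness below by taking the maximum of two affine bounds in $\CalS_\infty$ pointwise, whereas the paper combines the same two inequalities along a minimizing sequence; the substance is identical.
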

%
%
\begin{proof}
First we establish some notation. Recall that $D=F\times \RR^{d-m}$ where $F\subset \RR^m$ is compact and $m$-dimensional for $m\in \left\{1,\dots,d-1\right\}$. Denote $x = (x_1,x_2,\dots,x_d) \in D$ by $x=(\overline{x},\tilde{x})$ for $\overline{x}\in F$ and $\tilde{x}\in \RR^{d-m}$. For $\rho\in \CalP^{ac}(D)$, define the $\overline{x}$-marginal $\rho_{_{F}} \in \CalP^{ac}(\RR^{d-m})$ of $\rho$ by  
\begin{equation}
\rho_{_{F}}(\tilde{x}) = \int_F \rho(x)\,d\overline{x}.
%
%
\end{equation}
%
%
\textit{Step 1:} For any $\rho\in \CalP^{ac}(D)$, we have 
\begin{equation}\label{step1}
\CalS[\rho]\geq \CalS[\rho_{_{F}}]-\log|F|,
\end{equation}
where 
\[\CalS[\rho_{_{F}}] = \int_{\RR^{d-m}}\rho_{_{F}}(\tilde{x})\log(\rho_{_{F}}(\tilde{x}))\,d\tilde{x}.\]
To show \eqref{step1}, by Fubini's theorem we have
\[\CalS[\rho] =\int_D\rho(x)\log(\rho(x))\,dx= \int_{\R^{d-m}}\left(\int_F\rho(\overline{x},\tilde{x})\log\left(\rho(\overline{x},\tilde{x})\right)\,d\overline{x}\right)\,d\tilde{x}.\]
We now claim that for almost every $\tilde{x}\in \RR^{d-m}$,
\begin{equation}\label{claim1}
\int_F\rho(\overline{x},\tilde{x})\log\left(\rho(\overline{x},\tilde{x})\right)\,d\overline{x} \ \geq\  
\rho_{_{F}}(\tilde{x})\log\left(\frac{\rho_{_{F}}(\tilde{x})}{|F|}\right).
\end{equation}
Assuming the claim, we then have
\[\CalS[\rho] \geq \int_{\RR^{d-m}} \rho_{_{F}}(\tilde{x})\log\left(\frac{\rho_{_{F}}(\tilde{x})}{|F|}\right) \,d\tilde{x} = \CalS[\rho_{_{F}}] - \log|F|,\]
showing \eqref{step1}.

We now prove claim \eqref{claim1} using convexity. For almost every $\tilde{x}\in \RR^{d-m}$, the function $f({\overline{x}}):=\rho({\overline{x}},\tilde{x})$ is defined for $\overline{x}\in F$ up to a set of measure zero and satisfies $\nrm{f}_{L^1(F)} = \rho_{_{F}}(\tilde{x})$. For $\nrm{f}_{L^1(F)} = 0$ or $\nrm{f}_{L^1(F)} = +\infty$, the claim \eqref{claim1} trivially holds with equality. For $0<\nrm{f}_{L^1(F)}<+\infty$, by the convexity of $U(x) = x\log(x)$ we have for almost every $x \in [0,\,\infty)$ and every $y\in [0,\infty)$:
\[U(y) \geq U(x)+U'(x)(y-x).\]
Letting $y = f({\overline{x}})$ and $x = \frac{\nrm{f}_{L^1(F)}}{|F|}$, we have for almost every $\overline{x}\in F$,
\[U\left(f({\overline{x}})\right)\geq U\left(\frac{\nrm{f}_{L^1(F)}}{|F|}\right)+U'\left(\frac{\nrm{f}_{L^1(F)}}{|F|}\right)\,\left(f(\overline{x})-\frac{\nrm{f}_{L^1(F)}}{|F|}\right).\]
Integrating over $F$ we then get
\begin{align*}
\int_FU\left(f({\overline{x}})\right)\,d\overline{x}&\geq \int_FU\left(\frac{\nrm{f}_{L^1(F)}}{|F|}\right)\,d\overline{x}\\ 
&= \int_F \left(\frac{\nrm{f}_{L^1(F)}}{|F|}\right)\log\left(\frac{\nrm{f}_{L^1(F)}}{|F|}\right)\,d\overline{x}\\
&= \rho_{_{F}}(\tilde{x})\log\left(\frac{\rho_{_{F}}(\tilde{x})}{|F|}\right),
\end{align*}
which proves the claim.\\[5pt] 
We briefly note that intuition for \eqref{claim1} comes from the case $\rho_{_{F}}(\tilde{x}) = 1$, which implies that $\rho(\overline{x},\tilde{x})$ is a probability density on $F$. The inequality \eqref{claim1} is then equivalent to the uniform distribution on $F$ being the global minimizer of $\CalS$ over $\CalP(F)$, which is intuitively clear from an information perspective: the uniform distribution corresponds to the state with \textit{least information}, or maximum entropy $-\CalS$.  

%
\noindent \textit{Step 2.} There exists $\tilde{C}\in \RR$ such that for any $\rho\in \CalP^{ac}(D)$, we have
\begin{equation}\label{step2}
\CalK[\rho] \geq -(1+\delta_1)\nu\,\CalS[\rho_{_{F}}] + \tilde{C}.
\end{equation}
To show this, first note that for any $x = (\overline{x},\tilde{x})$ and $y = (\overline{y},\tilde{y})$ in $D$: 
\[\log\left(|x-y|\right)\geq \log\left(|\tilde{x}-\tilde{y}|\right).\]
Using this, together with the lower bound \eqref{kbound} on $K$, Fubini's theorem, and the logarithmic-HLS inequality on the space $\CalP^{ac}(\RR^{d-m})$ (see Lemma \ref{lemma:HLS}), we get:
\begin{align*}
\CalK[\rho] &= \frac{1}{2}\int_D\int_D K(x-y)\rho(x)\rho(y)\,dx\,dy\\ 
&\geq (1+\delta_1)(d-m)\nu\int_D\int_D\log\left(|x-y|\right)\rho(x)\rho(y)\,dx\,dy+ \frac{C_1}{2}\\
&\geq (1+\delta_1)(d-m)\nu\int_{\RR^{d-m}}\int_{\RR^{d-m}}\log\left(|\tilde{x}-\tilde{y}|\right)\left(\int_F\rho(x)\,d\overline{x}\right)\left(\int_F\rho(y)\,d\overline{y}\right)\,d\tilde{x}\,d\tilde{y}+  \frac{C_1}{2}\\
&= (1+\delta_1)(d-m)\nu\int_{\RR^{d-m}}\int_{\RR^{d-m}}\log\left(|\tilde{x}-\tilde{y}|\right)\rho_{_{F}}(\tilde{x})\rho_{_{F}}(\tilde{y})\,d\tilde{x}\,d\tilde{y}+  \frac{C_1}{2}\\
&\geq -(1+\delta_1)\nu\CalS[\rho_{_{F}}] -(1+\delta_1)(d-m)\nu\,C_0+ \frac{C_1}{2}. \\[-10pt]
\end{align*}
%
%
\textit{Step 3.} We now show that $\inf\CalE^\nu>-\infty$. \\
Consider a minimizing sequence $\left\{\rho^n\right\}_{n\geq 0}$ of $\CalE^\nu$ and without loss of generality assume that $\sup_n\left\{\CalE^\nu[\rho^n]\right\}<+\infty$. By Step 1 and the positivity of $K$, for every $n$ we have
\[\nu \CalS[\rho_{_{F}}^n]\leq \nu\CalS[\rho^n]+\nu\log|F|\leq \CalE^\nu[\rho^n]+\nu\log|F|<+\infty\]
and so $\sup_n\left\{\CalS[\rho_{_{F}}^n]\right\}<+\infty$. Putting Steps 1 and 2 together (for $\tilde{C}$ different from above), we get
\[\CalE^\nu[\rho^n] \geq -(1+\delta_1)\nu\CalS[\rho_{_{F}}^n] +\nu \CalS[\rho_{_{F}}^n]+\tilde{C} = - \left(\delta_1\nu\right) \CalS[\rho_{_{F}}^n] +\tilde{C},\]
which implies 
\[\inf_{\rho\in\CalP(D)}\CalE^\nu[\rho] =\lim_{n\to \infty} \CalE^\nu[\rho^n] \geq -  \left(\delta_1\nu\right) \sup_n\left\{\CalS[\rho_{_{F}}^n]\right\} >-\infty.\]
\textit{Step 4.} Minimizing sequences are tight-up-to-translations in $\RR^{d-m}$.\\[5pt]
The inequality \eqref{step2} in Step 2 can be rewritten as
\[\nu\CalS[\rho_{_{F}}]\geq -\frac{1}{1+\delta_1}\left(\CalK[\rho]-\tilde{C}\right).\] 
Combined with Step 1 and the positivity of $K$, this gives us 
\[\CalE^\nu[\rho] \geq \frac{\delta_1}{1+\delta_1}\CalK[\rho]+\tilde{C}\geq \tilde{C},\]
for $\tilde{C}\in \RR$ different from above. The boundedness of $\left\{\CalE^\nu[\rho^n]\right\}_{n\geq 0}$ implies that $\left\{\CalK[\rho^n]\right\}_{n\geq 0}$ is bounded: it was shown above that energy is bounded below, while the interaction energy has to be bounded above, and so by Lemma \ref{Ktight}, there exists a sequence $\left\{\rho^n\right\}_{n\geq 0}$ which is tight up to translations in $\Rd$. Let $\left\{\tilde{\rho}^n\right\}_{n\geq 0}$ be a translated version of the sequence that is tight in $\CalP(\Rd)$ and given by 
\[\tilde{\rho}^n(x) = \rho^n(x-x^n).\] 
Without loss of generality, we may assume by the compactness of $F$ that the translations $x^n$ satisfy $x^n_i = 0$ for $i=1,\dots,m$, which implies that (i) $\left\{\tilde{\rho}^n\right\}_{n\geq 0}$ is tight in $\CalP(D)$ and (ii) for each $n$, $\CalE^\nu[\rho^n] = \CalE^\nu[\tilde{\rho}^n]$, since the energy is invariant to translations in the last $d-m$ coordinates. By (i) and Prokhorov's theorem, we are guaranteed existence of a subsequence $\left\{\tilde{\rho}^{n_k}\right\}_{k\geq 0}$ which converges weakly-* to some $\gmin\in \CalP(D)$, and by (ii) and the lower semicontinuity of the energy (Lemma \ref{lowerSC}), we have that
\[\CalE^\nu[\gmin]\leq \liminf_{n\to \infty}\CalE^\nu[\tilde{\rho}^n] = \lim_{n\to \infty} \CalE^\nu[\rho^n] = \inf_{\rho\in \CalP(D)}\CalE^\nu[\rho],\]
and so $\gmin$ realizes the infimum of $\CalE^\nu$. Since $\inf_{\rho\in \CalP(D)}\CalE^\nu<+\infty$, we have that $\gmin$ is absolutely continuous with respect to the Lebesgue measure (see Remark \ref{ac-gmin}).
\end{proof}

\begin{rmrk}
\normalfont
Boundedness from below of the energy in Theorem \ref{tightbound} can be extended to domains $D = F\times H$ where $F\subset \RR^{m}$ is compact and $m$-dimensional and $H= H_1\times \dots\times H_{d-m}$ where each $H_i\subset \RR$ satisfies $|H_i| = +\infty$ and is given by the closure of a disjoint union of intervals:
\[H_i = \overline{\cup_{k=0}^\infty I_k^i}.\]   
In particular, we could have $H_i = [0,\infty)$. Introducing domains with such asymmetries, however, leads us again into the dilemma of the escaping mass phenomenon, and so boundedness from below of the energy may not be enough to grant existence.
\end{rmrk}


\subsection{Existence of Minimizers via Confining Potentials}
\label{subsect:V-existence}
In this section we establish sufficient conditions for existence of  global minimizers of the energy that take into account the escaping mass phenomenon. Given the considerations above, in many canonical domains we have no global minimizer despite $\CalE^\nu$ being bounded below. With insight from Theorem \ref{domassymthm}, we present here an approach for guaranteeing existence of a global minimizer through the addition of a suitable external potential $V$. For a simple example, we first return to the case $K(x) = \frac{1}{2}x^2$ and $D = [0,\infty)$ from Section \ref{subsect:escape}, adding a potential $V(x) = gx$. In Theorem \ref{existence1} we then establish a condition on $V$ (see \eqref{Vbound1}) that is in some sense minimal and guarantees existence of a global minimizer of $\CalE^\nu$ for general domains $D$ satisfying Assumption 2. Finally, Theorem \ref{existence2} provides a weaker set of requirements on $V$ which takes advantage of symmetries within the domain.
\begin{thm}\label{COM2}
Let $D = [0, +\infty)$, $K(x) = \frac{1}{2}x^2$ and $V(x) = gx$ for $g>0$. Then for any $g>0$, there exists a unique critical point $\gmin$ for $\CalE^\nu$ in the space of measures in $\CalP_2^{ac}(D)$ having support equal to $D$.
\end{thm}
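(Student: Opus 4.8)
The plan is to characterize every full-support critical point as a fixed point of the map $T$ from Corollary \ref{fixedpointmap}, reduce that fixed-point equation to a single scalar self-consistency equation for the centre of a truncated Gaussian, and show this scalar equation has exactly one solution for each $g>0$. First I would invoke Theorem \ref{suppmin} (so any candidate $\gmin$ automatically has full support) together with Theorem \ref{EL} and Corollary \ref{fixedpointmap}: a measure $\gmin\in\CalP_2^{ac}(D)$ with $\supp{\gmin}=D$ is a critical point of $\CalE^\nu$ if and only if $\gmin = T(\gmin)$, where $T(\mu)(x) = Z^{-1}\exp\!\big(-\nu^{-1}(K*\mu(x)+gx)\big)$. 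Reusing the convolution identity already computed in Theorem \ref{COM1}, namely $K*\gmin(x) = \tfrac12\big(x-M_1(\gmin)\big)^2 + \text{const}$, and completing the square to absorb the linear potential, $\tfrac12(x-M_1(\gmin))^2 + gx = \tfrac12\big(x-(M_1(\gmin)-g)\big)^2 + \text{const}$, I would conclude that any fixed point must be a truncated Gaussian $\rho_b(x) = A(b)\exp\!\big(-\tfrac1{2\nu}(x-b)^2\big)$ on $[0,\infty)$ whose centre is $b = M_1(\gmin)-g$. Hence the fixed-point equation collapses to the self-consistency condition $M_1(\rho_b) = b + g$.

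Next I would use the classical formula for the mean of a Gaussian truncated to $[0,\infty)$, $M_1(\rho_b) = b + \sqrt{\nu}\,m(b/\sqrt{\nu})$, where $m(t) = \varphi(t)/\Phi(t)$ is the inverse Mills ratio ($\varphi,\Phi$ the standard normal density and distribution function). The self-consistency condition then becomes the scalar equation $\sqrt{\nu}\,m(b/\sqrt{\nu}) = g$. Conversely, for any $b$ solving this equation the corresponding $\rho_b$ is genuinely a fixed point of $T$ (the computation above is reversible, and $\rho_b>0$ on $[0,\infty)$ gives $\supp{\rho_b}=D$ and finite second moment). Thus existence and uniqueness of a full-support critical point in $\CalP_2^{ac}(D)$ is \emph{equivalent} to existence and uniqueness of a root of $m(t) = g/\sqrt{\nu}$, with the critical point recovered as $\gmin = \rho_{b^*}$, $b^* = \sqrt{\nu}\,t^*$.

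The crux, and the step I expect to be the main obstacle, is showing that $m\colon\RR\to(0,\infty)$ is a continuous, strictly decreasing bijection, so that $m(t)=g/\sqrt{\nu}$ has a unique solution for each $g>0$. Strict monotonicity follows from the identity
\[
m'(t) = -\,m(t)\big(t+m(t)\big),
\]
so the sign hinges on the positivity $t+m(t)>0$ for all $t\in\RR$; this is precisely the statement $M_1(\rho_b)/\sqrt{\nu}>0$, which holds trivially because $\rho_b$ is a nondegenerate distribution supported on $[0,\infty)$. Surjectivity onto $(0,\infty)$ follows from the limits $m(t)\to 0$ as $t\to+\infty$ and $m(t)\to+\infty$ as $t\to-\infty$ (equivalently, $M_1(\rho_b)-b\to 0$ as $b\to+\infty$, while $M_1(\rho_b)\to 0^+$ as $b\to-\infty$). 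Together these produce the unique centre $b^*=\sqrt{\nu}\,m^{-1}(g/\sqrt{\nu})$ and hence the unique critical point.

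Finally, to connect with Figure \ref{energy_quad_attract}, I would record the energy computation as an independent confirmation. Adding $\CalV[\rho_c] = g\,M_1(\rho_c)$ to the expression \eqref{COMenergy1} and differentiating $\CalE^\nu[\rho_c]$ along the family $\Gamma_c$, the derivative factors as $\big(1+\tfrac12 f''(\tilde c)\big)\big(-\nu f'(\tilde c)+2g\sqrt{\nu/2}\big)$ with $f(\tilde c)=\log(1+\text{erf}(\tilde c))$. Since Theorem \ref{COM1} already establishes $1+\tfrac12 f''>0$, the stationarity condition reduces to $f'(\tilde c)=g\sqrt{2/\nu}$, which is the same monotone scalar equation (as $f'$ is a rescaling of $m$) and therefore has the unique root exhibited by the $g>0$ curves in Figure \ref{energy_quad_attract}.
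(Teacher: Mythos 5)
Your proof is correct and follows essentially the same route as the paper: both reduce to the fixed-point map $T$, observe that any fixed point must be a truncated Gaussian $\rho_c$ with shift $c = M_1 - g$, and solve a single monotone scalar equation for that shift (your inverse-Mills-ratio equation $\sqrt{\nu}\,m(b/\sqrt{\nu})=g$ is exactly the paper's $f'(\tilde{c})=\sqrt{2/\nu}\,g$ after rescaling, since $f'(\tilde{c})=\sqrt{2}\,m(\sqrt{2}\,\tilde{c})$). The only difference is presentational: you derive the scalar equation directly from the self-consistency condition $M_1(\rho_b)=b+g$ and prove monotonicity via $m'=-m\,(t+m)$ with $t+m=M_1(\rho_b)/\sqrt{\nu}>0$, whereas the paper first differentiates the explicit energy along $\Gamma_c$ and then separately verifies the fixed-point identity $T(\rho_{c^*})=\rho_{c^*}$ --- your route is marginally more direct but reaches the same equation by the same mechanism.
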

\begin{proof}
\noindent The energy is given by
\begin{equation}\label{en-g}
\CalE^\nu[\rho] = \frac{1}{4}\int_0^\infty\int_0^\infty (x-y)^2\,\rho(x)\rho(y)\,dx\,dy+\nu\int_0^\infty\rho(x)\log(\rho(x))\,dx+g\int_0^\infty x\rho(x)\,dx.
\end{equation}
We proceed as in Theorem \ref{COM1} and look for energy minimizers using the fixed-point characterization of critical points \eqref{gibbstat}, only now we show that the map $T(\rho)$ has a unique fixed point. Borrowing from the calculations in Theorem \ref{COM1}, for any $\rho\in\CalP^{ac}_2(D)$ we have:
\[T(\rho) = Z^{-1}\exp\left(-\frac{K*\rho(x)+V(x)}{\nu}\right) = A(c)\,\exp\left(-\frac{1}{2\nu}\left(x-c\right)^2\right),\] 
where now $c = M_1(\rho)-g$. Since $T$ maps $\CalP^{ac}_2(D)$ into $\Gamma_c$, by Corollary \ref{fixedpointmap} it suffices to look for critical points in $\Gamma_c$. 

We then proceed as above and first attempt to satisfy the necessary condition $\frac{d}{dc}\CalE^\nu[\rho_c] = 0$. With $g>0$ the energy \eqref{COMenergy1} becomes
\begin{equation}\label{energy_quad_attract_formula}
\CalE^\nu[\rho_c] = \nu\log\left(\frac{2}{\sqrt{2\pi\nu}}\right) - \nu\left(f({\tilde{c}})+\frac{1}{4}f'({\tilde{c}})^2\right) + g\left(\sqrt{\frac{\nu}{2}} f'({\tilde{c}}) +\sqrt{2\nu}\,{\tilde{c}}\right),
\end{equation}
whereby solving $\frac{d}{dc}\CalE^\nu[\rho_c] = 0$ reduces to finding a root $\tilde{c}$ to
\[\left(\sqrt{\frac{2}{\nu}}g - f'(\tilde{c})\right)\left(1+\frac{1}{2}f''(\tilde{c})\right) = 0.\]
From \eqref{COMcont}, we know that the second term is strictly positive, so we may divide by it and further reduce the problem to solving
\begin{equation}\label{COMroot} 
f'(\tilde{c}) = \sqrt{\frac{2}{\nu}}\,g.
\end{equation}
For any $g,\nu>0$, \eqref{COMroot} has a unique solution since $f':\R\to[0,\infty)$ is smooth and monotonically decreasing, so we have that there exists a unique \textit{candidate} critical point $\rho_{c^*}\in \Gamma_c$ where $\frac{c^*}{\sqrt{2\nu}}$ solves \eqref{COMroot}; $\rho_{c^*}$ is then a critical point of $\CalE^\nu$ over the space $\Gamma_c$.

All that remains is to show that $T(\rho_{c^*})=\rho_{c^*}$ to conclude that $\rho_{c^*}$ is in fact a critical point of $\CalE^\nu$ over all of $\CalP^{ac}_2(D)$. Indeed, since $T$ maps $\CalP^{ac}_2(D)$ into $\Gamma_c$, we have $T(\rho_{c^*})=\rho_{c'} \in \Gamma_c$ for some $c'\in\RR$, and by direct calculation,
\[c' = M_1(\rho_{c^*})-g = \sqrt{\frac{\nu}{2}}f'\left(\frac{c^*}{\sqrt{2\nu}}\right)+c^*-g = c^*,\]
since $\frac{c^*}{\sqrt{2\nu}}$ solves \eqref{COMroot}. This shows that $\rho_{c'} = \rho_{c^*}$ since every member of $\Gamma_c$ is uniquely determined by its shift $c$. This completes the proof. We refer the reader back to Figure \ref{energy_quad_attract} for a comparison of $c\to \CalE^\nu[\rho_c]$ for several values of $g>0$, with each plot showing one global minimum.
\end{proof}
\begin{rmrk}
\label{rmk:gc}
\normalfont
For $g=g_c := \sqrt{\frac{2\nu}{\pi}}$, the solution $c$ to \eqref{COMroot} is exactly $c=0$, which implies that the critical point $\gmin$ is exactly a half-Gaussian, and for $g\geq g_c$, the maximum of $\gmin$ lies at $x=0$. This is used to benchmark the numerical method in Section \ref{sect:numerics}, in Figure \ref{kp2}.
\end{rmrk}


\begin{thm}\label{existence1}
Suppose that Assumptions \ref{assumKV} and \ref{assumD} are satisfied and that $K$ and $V$ are positive. In addition, suppose that for some $\delta > 0$ and $C_K\in \RR$,
\vspace{5pt}\begin{equation}\label{existFree}\vspace{5pt}
K(x) \geq 2(1+\delta)d\nu \log|x| + C_K,
\end{equation}
and that for some $x_0\in D$, $V$ satisfies
\begin{equation}\label{Vbound1}
\lim_{R\to \infty}\left( \inf_{x\in B_R^c(x_0)} V(x)\right) = +\infty.
\end{equation}
Then there exists a global minimizer $\gmin \in \CalP^{ac}(D)$ of $\CalE^\nu$.
\end{thm}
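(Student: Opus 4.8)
The plan is to argue by the direct method, following the skeleton of the proof of Theorem \ref{tightbound}, but replacing the ``tight-up-to-translation'' compactness (which now fails because of the escaping mass phenomenon) with genuine tightness forced by the confining potential $V$. Concretely, I would (i) use the growth condition \eqref{existFree} together with the logarithmic HLS inequality (Lemma \ref{lemma:HLS}) to bound $\CalE^\nu$ below on $\CalP(D)$; (ii) extract from a minimizing sequence $\{\rho^n\}_{n\ge0}$ a uniform bound on the potential energy $\CalV[\rho^n]$ and invoke \eqref{Vbound1} to prove the sequence is tight; (iii) apply Prokhorov's theorem to obtain a weak-$*$ limit $\gmin$; and (iv) conclude via weak-$*$ lower semicontinuity (Lemma \ref{lowerSC}) that $\gmin$ attains the infimum, which is automatically absolutely continuous by Remark \ref{ac-gmin}.

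For boundedness below, viewing $\rho\in\CalP^{ac}(D)$ as an element of $\CalP^{ac}(\Rd)$ extended by zero, the bound \eqref{existFree} gives $\CalK[\rho]\ge (1+\delta)d\nu\int_{\Rd}\int_{\Rd}\log|x-y|\,\rho(x)\rho(y)\,dx\,dy + \tfrac{1}{2}C_K$, and Lemma \ref{lemma:HLS} converts this into $\CalK[\rho]\ge -(1+\delta)\nu\,\CalS[\rho]+\tilde C_1$ for some constant $\tilde C_1$. Since $K,V\ge0$ force $\CalK,\CalV\ge0$, one has $\CalE^\nu[\rho]\ge \nu\CalS[\rho]$, so along any minimizing sequence (which may be taken with $\sup_n\CalE^\nu[\rho^n]<\infty$) the entropies satisfy $\CalS[\rho^n]\le S^*:=\tfrac{1}{\nu}\sup_n\CalE^\nu[\rho^n]<\infty$. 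Feeding the HLS bound back in, together with $\CalV\ge0$, gives $\CalE^\nu[\rho]\ge -\delta\nu\,\CalS[\rho]+\tilde C_1$, whence $\CalE^\nu[\rho^n]\ge -\delta\nu S^*+\tilde C_1>-\infty$ and therefore $\inf_{\CalP(D)}\CalE^\nu>-\infty$.

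The crux of the argument, and the step that genuinely distinguishes this result from the free-space theory, is the tightness in (ii). Writing $\CalE^\nu[\rho^n]=\CalK[\rho^n]+\nu\CalS[\rho^n]+\CalV[\rho^n]$, the lower bound $\CalK[\rho^n]+\nu\CalS[\rho^n]\ge -\delta\nu\CalS[\rho^n]+\tilde C_1$ and the upper bound $\CalS[\rho^n]\le S^*$ yield a uniform bound $\CalV[\rho^n]=\int_D V\,d\rho^n\le M<\infty$. Then, using $V\ge0$, for every $R>0$ I estimate $\big(\inf_{x\in B_R^c(x_0)}V(x)\big)\,\rho^n(B_R^c(x_0))\le \int_{B_R^c(x_0)}V\,d\rho^n\le M$, so that $\rho^n(B_R^c(x_0))\le M/\inf_{B_R^c(x_0)}V$. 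Given $\epsilon>0$, the confinement hypothesis \eqref{Vbound1} lets me pick $R$ so large that this quantity is $<\epsilon$ for every $n$, with $K_\epsilon:=\overline{B_R(x_0)}\cap D$ compact; this is precisely tightness of $\{\rho^n\}$.

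Finally, Prokhorov's theorem produces a subsequence $\rho^{n_k}\overset{*}{\rightharpoonup}\gmin\in\CalP(D)$, and Lemma \ref{lowerSC} gives $\CalE^\nu[\gmin]\le\liminf_k\CalE^\nu[\rho^{n_k}]=\inf_{\CalP(D)}\CalE^\nu$, so $\gmin$ realizes the infimum; since this infimum is finite, Remark \ref{ac-gmin} forces $\gmin\in\CalP^{ac}(D)$. I expect the tightness step to be the conceptual heart of the proof, since it is exactly where the external potential compensates for the loss of translation invariance. The main technical point requiring care is the applicability of Lemma \ref{lemma:HLS}, whose hypothesis $\log(1+|\cdot|^2)\rho\in L^1(\Rd)$ must be verified along the minimizing sequence; here the growth condition \eqref{existFree} is what controls the logarithmic moment through the finiteness of $\CalK[\rho^n]$, and this bookkeeping is the place where the routine estimates must be carried out carefully.
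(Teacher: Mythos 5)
Your proposal is correct and follows essentially the same route as the paper: a uniform bound on $\int_D V\,d\mu_n$ along a minimizing sequence, the Chebyshev-type estimate $\mu_n(B_R^c(x_0))\le M/\inf_{B_R^c(x_0)}V$ combined with \eqref{Vbound1} to get tightness, then Prokhorov's theorem and Lemma \ref{lowerSC}. The only (cosmetic) difference is that you inline the logarithmic HLS argument to bound the energy below, whereas the paper obtains this by extending densities by zero to $\Rd$ and citing the free-space result of \cite{carrillo2018existence} — which is itself proved by the same HLS computation you write out.
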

\begin{proof}
We will first show that the energy $\CalE^\nu$ is bounded below and then prove that minimizing sequences are tight. Indeed, the boundedness from below of $\CalE^\nu$ follows from results in free space. By Theorem \ref{tightbound} above (along with \cite{carrillo2018existence}), relation \eqref{existFree} between $K$ and $\nu$ is sufficient to guarantee that $\CalE^\nu$ is bounded below over $\CalP(\Rd)$ by a constant $C\in \RR$ when $V=0$.

Since $|D|>0$ and we are not requiring any regularity of measures other than absolute continuity with respect to Lebesgue measure, for any $\mu \in \CalP^{ac}(D)$ with density $\rho$ we can define a measure $\mu_0\in \CalP^{ac}(\Rd)$ with density $\rho_0(x)$ by extending $\rho$ by zero:
\begin{equation}\label{rhozero}
\rho_0(x) = \begin{cases} \rho(x), & x\in D \\  0, & x \in D^{\,c}.\end{cases}
\end{equation}
For each $\mu \in \CalP^{ac}(D)$ we then have the lower bound
\begin{align*}
\CalE^\nu[\mu] &=\frac{1}{2}\int_D\int_DK(x-y)\,d\mu(x)d\mu(y)+\nu\int_D\rho(x)\log(\rho(x))\,dx+\int_DV(x)\,d\mu(x) \\
&= \frac{1}{2}\int_{\Rd}\int_{\Rd} K(x-y)\,d\mu_0(x)d\mu_0(y)+\nu\int_{\Rd}\rho_0(x)\log(\rho_0(x))\,dx+\int_{D} V(x)\,d\mu(x)\\
&>C + \int_{D} V(x)\,d\mu(x),
\end{align*}
which implies
\begin{equation}\label{Vbound2}
\int_DV(x)\,d\mu(x) < \CalE^\nu[\mu]-C.
\end{equation}
Now consider a minimizing sequence $\left\{\mu_n\right\}_{n\geq 0}\subset \CalP^{ac}(D)$ of $\CalE^\nu$. The following argument shows that $\left\{\mu_n\right\}_{n\geq 0}$ is tight. Since $\left\{\mu_n\right\}_{n\geq 0}$ is minimizing, we can assume $\left\{\CalE^\nu[\mu_n]\right\}_{n\geq 0}$ is bounded above, hence 
\eqref{Vbound2} implies
\begin{equation}\label{Vbound3}
\sup_n\int_D V(x)\,d\mu_n(x) < M
\end{equation}
for some $M\in \RR$. Fix $\epsilon>0$ and let $L>0$ be large enough that $M/L<\epsilon$. From \eqref{Vbound1}, to $L$ there corresponds an $R$ such that 
\[\inf_{x\in B^c_R(x_0)}V(x)>L.\]
For each $\mu_n$ we then have 
\[L\int_{B^c_R(x_0)\cap D}\,d\mu_n(x)\leq \int_{B^c_R(x_0)\cap D}V(x)\,d\mu_n(x)\leq \int_DV(x)\,d\mu_n(x)<M,\]
hence for the compact set $K_\epsilon = B_R(x_0)\cap D$,
\[\mu_n(K_\epsilon)> 1-\epsilon.\]
This shows that the minimizing sequence $\left\{\mu_n\right\}_{n\geq 0}$ is tight. By Prokhorov's theorem we may then extract a subsequence $\left\{\mu_{n_k}\right\}_{k\geq 0}$ which converges in the weak-* topology of measures to some $\gmin\in \CalP(D)$. It follows from the weak-* lower semicontinuity of $\CalE^\nu$ (Lemma \ref{lowerSC}) that 
\[\CalE^\nu[\gmin] \leq \liminf_{k\to \infty}\CalE^\nu[\rho_{n_k}] = \lim_{n\to \infty}\CalE^\nu[\rho_n] = \inf_{\rho \in \CalP(D)} \CalE^\nu[\rho],\]
and so $\gmin$ realizes the infimum. Moreover, by Remark \ref{ac-gmin}, $\gmin\in \CalP^{ac}(D)$.
\end{proof}
The previous theorem provides a way to guarantee existence of a minimizer in all domains $D$ satisfying Assumption 2, simply by adding an external potential to contain the mass and enforce tightness. As the following theorem shows, in many domains a less restrictive external potential is needed to ensure a minimizer.

We will need some terminology for the next theorem. Define a \textit{band} $S^i_a$ in $\Rd$ by 
\[S^i_a = \left\{x\in \Rd \,:\, |x_i|<a\right\}.\]
Also, we define a function $f:\Rd\to \R$ to be \textit{discrete-translation invariant in} $u \in \Rd$, if for any $m\in \ZZ$, 
\[f(x+m u) = f(x), \qquad \text{for all } x\in \Rd.\] 
A subset $D\subset \Rd$ is called discrete-translation invariant in $u \in \Rd$ if its indicator function $\ind{D}$ is discrete-translation invariant in $u$ by definition above. 
\begin{thm}\label{existence2}
Let $(x_1,\dots,x_d)$ be a fixed orthogonal coordinate system for $\Rd$. Suppose the hypotheses of Theorem \ref{existence1} are satisfied, except that \eqref{Vbound1} is replaced with the following: for each coordinate $x_i$, at least one of the following holds:
\begin{enumerate}[label=(\roman*)]
\item $D$ is bounded in $x_i$.
\item $V$ is unbounded in $x_i$ of the form
\begin{equation}\label{Vcoordbound}
\lim_{a\to \infty}\left( \inf_{x\in (S^i_a)^c} V(x)\right) = +\infty.
\end{equation}
\item $D$ and $V$ are discrete-translation invariant in $s_i\hat{e}_i$ for some $s_i>0$.
\end{enumerate}
Then there exists a global minimizer $\gmin\in \CalP^{ac}(D)$ of $\CalE^\nu$.
\end{thm}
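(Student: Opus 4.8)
The plan is to follow the architecture of Theorem \ref{existence1}---bound the energy below, extract a tight minimizing sequence, and then apply Prokhorov together with lower semicontinuity---but to relax \emph{tightness} to \emph{tightness after an admissible re-centering}, where the allowed translations are dictated by which of the three cases is invoked in each coordinate. I would first assign to every coordinate $x_i$ one applicable case, thereby partitioning $\{1,\dots,d\}$ into index sets $I_1,I_2,I_3$. Translations of the form $w=\sum_{i\in I_3}k_i s_i\hat{e}_i$ with $k_i\in\ZZ$ will be called \emph{admissible}: since $D$ and $V$ are discrete-translation invariant in $s_i\hat{e}_i$ for $i\in I_3$, such a $w$ satisfies $D+w=D$ and $V(\,\cdot\,+w)=V$, so translating a competitor by $w$ preserves its support in $D$ and---using the translation invariance of $\CalK$ and $\CalS$---its energy. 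Thus $\mu\mapsto\mu(\,\cdot\,-w)$ maps $\CalP^{ac}(D)$ to itself with $\CalE^\nu[\mu(\,\cdot\,-w)]=\CalE^\nu[\mu]$.

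Next I would establish boundedness below and control of the interaction energy at once. Applying the logarithmic HLS inequality (Lemma \ref{lemma:HLS}) to the zero-extension $\rho_0\in\CalP^{ac}(\RR^d)$ of a density $\rho\in\CalP^{ac}(D)$, together with the lower bound \eqref{existFree} and the positivity of $K$ and $V$, yields a constant $c\in\RR$ with $\CalE^\nu[\mu]\geq\frac{\delta}{1+\delta}\CalK[\mu]+c$ for every $\mu\in\CalP^{ac}(D)$ (the factor $d$ in \eqref{existFree} is matched by the constant $\tfrac1d$ in Lemma \ref{lemma:HLS}, exactly as in Theorem \ref{tightbound} and \cite{carrillo2018existence}). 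Because $K\geq0$ this bounds $\CalE^\nu$ below; and along any minimizing sequence $\{\mu_n\}$, which we may take to satisfy $\sup_n\CalE^\nu[\mu_n]<\infty$, it forces $\sup_n\CalK[\mu_n]<\infty$. The same chain gives $\CalK[\mu]+\nu\CalS[\mu]\geq c$, so $\CalV[\mu_n]=\CalE^\nu[\mu_n]-(\CalK[\mu_n]+\nu\CalS[\mu_n])$ is bounded above by some $M\in\RR$.

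I would then confine the mass in the non-periodic directions. For $i\in I_1$ the domain lies in a slab $\{|x_i|\leq a_i\}$, so every $\mu_n$ is automatically supported there. For $i\in I_2$, the bound $\sup_n\CalV[\mu_n]\leq M$ combined with \eqref{Vcoordbound} and the positivity of $V$ gives, for each $\epsilon>0$, a band $\{|x_i|\leq b_i\}$ carrying $\mu_n$-mass exceeding $1-\epsilon$ uniformly in $n$, exactly as in Theorem \ref{existence1}. Meanwhile, since $\sup_n\CalK[\mu_n]<\infty$ while $K$ is positive, symmetric and tends to $+\infty$, Lemma \ref{Ktight} furnishes translations $z_n\in\RR^d$ for which $\{\mu_n(\,\cdot\,-z_n)\}$ is tight in $\CalP(\RR^d)$.

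The crux---and the step I expect to be the main obstacle---is reconciling these global translations with the admissible ones. Fixing $\epsilon<1/2$, the tight translate concentrates $\mu_n$ on $\tilde{K}_\epsilon-z_n$, while the slab/band estimates concentrate it on $\{|x_i|\leq b_i\}$ for each $i\in I_1\cup I_2$; since both carry mass $>1-\epsilon$ these regions must overlap, which pins each $z_n^i$ with $i\in I_1\cup I_2$ to a bounded range. I then define an admissible $w_n$ by setting $w_n^i=0$ for $i\in I_1\cup I_2$ and $w_n^i$ equal to the nearest integer multiple of $s_i$ to $z_n^i$ for $i\in I_3$, so that $\sup_n|z_n-w_n|<\infty$. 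Translating a tight family by uniformly bounded vectors preserves tightness, so $\{\mu_n(\,\cdot\,-w_n)\}$ is tight in $\CalP(\RR^d)$; by admissibility it remains supported in $D$, hence tight in $\CalP(D)$, with $\CalE^\nu[\mu_n(\,\cdot\,-w_n)]=\CalE^\nu[\mu_n]$, so it is still minimizing. Prokhorov's theorem extracts a weak-$*$ limit $\gmin\in\CalP(D)$, Lemma \ref{lowerSC} gives $\CalE^\nu[\gmin]\leq\liminf_n\CalE^\nu[\mu_n(\,\cdot\,-w_n)]=\inf\CalE^\nu$, and $\gmin\in\CalP^{ac}(D)$ by Remark \ref{ac-gmin}. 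The delicate bookkeeping lies in securing the band estimates uniformly across all $I_2$-directions simultaneously and in verifying that rounding $z_n$ to the lattice in the $I_3$-directions neither enlarges $|z_n-w_n|$ uncontrollably nor disturbs the confinement already obtained in the remaining coordinates.
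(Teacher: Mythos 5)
Your proposal is correct and follows essentially the same route as the paper's proof: bound $\CalK[\mu_n]$ via the logarithmic HLS inequality, invoke Lemma \ref{Ktight} for tightness up to translation, confine mass in the bounded/$V$-confined coordinates, and round the remaining translation components to the lattice $s_i\ZZ$ so that the re-centered sequence stays in $\CalP(D)$ with unchanged energy before applying Prokhorov and Lemma \ref{lowerSC}. Your overlap argument pinning $z_n^i$ for $i\in I_1\cup I_2$ to a bounded range is a slightly more explicit justification of the paper's ``without loss of generality $x^{n,1}_i=0$ for $i\notin I$'' step, but the substance is identical.
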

\begin{proof}
As before, we consider a minimizing sequence $\left\{\mu_n\right\}_{n\geq 0} \subset \CalP^{ac}(D)$ for $\CalE^\nu$ over $\CalP(D)$, where we assume that $\left\{\CalE^\nu[\mu_n]\right\}_{n\geq 0}$ is bounded above by some $\tilde{M}>0$. Again, \eqref{existFree} implies that $\left\{\CalE^\nu[\mu_n]\right\}_{n\geq 0}$ is bounded below, and so the upper bound \eqref{Vbound3} on $\left\{\CalV[\mu_n]\right\}_{n\geq 0}$ still holds. As in \eqref{rhozero}, an absolutely continuous measure $\mu\in \CalP^{ac}(D)$ may be extended by zero to a probability measure on $\Rd$, so with some abuse of notation we will refer to $\mu\in \CalP^{ac}(\Rd)$ as a probability measure on $D$ whenever $\mu(D^c) = 0$.

Since we can no longer extract tightness just from $V$, we will instead exploit the fact that the interaction energy $\CalK$ is bounded and use Lemma \ref{Ktight} to conclude that $\left\{\mu_n\right\}_{n\geq 0}$ is tight-up-to-translations in $\CalP(\Rd)$. Then, with each coordinate $x_i$ satisfying at least (i), (ii) or (iii), we will show that a translated sequence $\left\{\tilde{\mu}_n\right\}_{n\geq 0}$ exists that lies in $\CalP(D)$ and remains minimizing. 

To see that the interaction portion of the energy is bounded, we reuse some arguments from \cite{carrillo2018existence}. Namely, the logarithmic HLS inequality (Lemma \ref{lemma:HLS}) together with \eqref{existFree} imply that for each $\mu\in \CalP^{ac}(D)$ with $d\mu(x) = \rho(x)\,dx$,
\begin{align*} 
\nu\CalS[\mu] &\geq -\nu d\int_{\Rd}\int_{\Rd}\log(|x-y|)\rho(x)\rho(y)\,dx\,dy-\nu dC_0\\
&\geq -\frac{1}{2(1+\delta)} \int_{\Rd}\int_{\Rd}K(x-y)\,d\mu(x)\,d\mu(y)-\nu dC_0-\frac{1}{2(1+\delta)}C_K\\
&= - \frac{1}{1+\delta}\CalK[\mu]-\tilde{C}
\end{align*}
for $\tilde{C} = \nu dC_0+\frac{1}{2(1+\delta)}C_K$. By the positivity of $V$, for each $\mu_n$ we have
\[\frac{\delta}{1+\delta}\CalK[\mu_n]\ \ \leq\ \ \CalK[\mu_n]+\nu\CalS[\mu_n]+\CalV[\mu_n] + \tilde{C}\ \ =\ \  \CalE^\nu[\mu_n]+\tilde{C}\ <\ \tilde{M}+\tilde{C},\]
hence $\left\{\CalK[\mu_n]\right\}_{n\geq 0}$ is bounded above. By Lemma \ref{Ktight} we now have that $\left\{\mu_n\right\}_{n\geq 0}$ is tight up to translations in free space.

We now construct a tight, translated version of $\left\{\mu_n\right\}_{n\geq 0}$ that retains the property $\mu_n(D)=1$ and remains energy minimizing. To do so we address each coordinate $x_i$ and consider the three cases above. Let $\epsilon>0$ be given. \\

\noindent (i) For each $x_i$ in which $D$ is bounded, let $L_i = \displaystyle\sup_{x\in D}|x_i|$ and note that for each $n$
\[\mu_n(S^i_{L_i}) = 1 > 1-\epsilon.\] 
\smallskip
\noindent (ii) Similarly, for each $x_i$ in which $V$ satisfies \eqref{Vcoordbound}, there exists $L_i>0$ such that
\[\mu_n(S^i_{L_i})>1-\epsilon\]
uniformly in $n$ by a similar argument as in Theorem \ref{existence1}. Indeed, since $\left\{\CalV[\mu_n]\right\}_{n\geq 0}$ is bounded above by some $M>0$, let $L$ be large enough that $M/L<\epsilon$. Then there exists $L_i>0$ such that
 \[\inf_{x\in  \bigl(S^i_{L_i}\bigr)^c}V(x)>L,\]
hence 
\[\int_{\bigl(S^i_{L_i}\bigr)^c\cap D}\,d\mu_n(x)\leq \frac{1}{L}\int_{\bigl(S^i_{L_i}\bigr)^c\cap D}V(x)\,d\mu_n(x)\leq \frac{1}{L}\int_DV(x)\,d\mu_n(x)< \epsilon.\]
\smallskip
\noindent (iii) Now consider the index set $I$ of coordinates $x_i$ for which $D$ and $V$ are discrete translation invariant in $s_i\hat{e}_i$ for some $s_i>0$. First we note that if $D$ is discrete translation invariant, then so are $\CalK$ and $\CalS$ by a change of variables. If $V$ is also discrete translation invariant, then so is $\CalV$, hence for each $i\in I$, the energy $\CalE^\nu$ is discrete translation invariant in $s_i\hat{e}_i$.

Let $\left\{\mu^1_n\right\}_{n\geq 0} = \left\{\mu_n(x-x^{n,1})\right\}_{n\geq 0} \subset \CalP^{ac}(\Rd)$ be a translated sequence which is tight but may no longer satisfy $\mu^1_n(D)=1$. Without loss of generality we have $x^{n,1}_i = 0$ for $i\notin I$ using the arguments above for (i) and (ii), so translations have only occurred in coordinates $x_i$ for $i\in I$.

Now define another translated sequence $\left\{\mu^2_n\right\}_{n\geq 0}$ by
\[\mu^2_n := \mu^1_n(x+\tilde{x}^{n,1}) = \mu_n(x-x^{n,2})\]
where the translations are defined by
\[x^{n,2} := x^{n,1} - \sum_{\substack{i=1\\ i\in I}}^d\text{mod}\left(x^{n,1}_i,s_i\right)\hat{e}_i :=  x^{n,1}-\tilde{x}^{n,1}\]
where 
\[\text{mod}\left(x^{n,1}_i,s_i\right) := x^{n,1}_i - \Big\lfloor\frac{x^{n,1}_i}{s_i}\Big\rfloor s_i.\]
From this we get for each $i\in I$ that
\[x^{n,2}_i = \Big\lfloor\frac{x^{n,1}_i}{s_i}\Big\rfloor s_i = m^n_i s_i, \qquad \text{for some } m^n_i\in \ZZ.\]
Hence by discrete translation invariance,
\[\mu^2_n(D) = \mu_n\left(D - x^{n,2}\right) = \mu_n\left(D - \sum_{\substack{i=1\\ i\in I}}^dm^n_i s_i\hat{e}_i\right) = \mu_n(D)=1,\]
and so $\left\{\mu^2_n\right\}_{n\geq 0}$ lies in $\CalP(D)$. Similarly,
\[\CalE^\nu[\mu^2_n] = \CalE^\nu\left[\mu_n\left(x - \sum_{\substack{i=1\\ i\in I}}^dm_i s_i\hat{e}_i\right)\right] = \CalE^\nu[\mu_n],\]
thus $\left\{\mu^2_n\right\}_{n\geq 0}$ retains the minimizing property of the original sequence $\left\{\mu_n\right\}_{n\geq 0}$. To see that $\left\{\mu^2_n\right\}_{n\geq 0}$ is tight, we can use the fact that $\left\{\mu^1_n\right\}_{n\geq 0}$ is tight to find a compact set $K^1_\epsilon\subset \Rd$ for which $\mu^1_n(K^1_\epsilon)>1-\epsilon$ for each $n$. Since 
\[\left\vert x^{n,2} - x^{n,1}\right\vert \leq \sqrt{d}\max_{i\in I}s_i,\]
the compact set 
\[K^2_\epsilon = \left\{x\in D \,:\, \dist{x}{K^1_\epsilon}\leq \sqrt{d}\max_{i\in I}s_i\right\}\]
satisfies $\mu_n^2(K^2_\epsilon)>1-\epsilon$ for each $n$. We may now apply Prokhorov's theorem and lower semicontinuity of the energy to extract a convergent subsequence $\left\{\mu^2_{n_k}\right\}_{k\geq 0}$ such that $\mu^2_{n_k}\overset{*}{\rightharpoonup} \gmin \in \CalP(D)$ and $\CalE^\nu[\gmin] = \inf\CalE^\nu>-\infty$. Finally, as above, Remark \ref{ac-gmin} implies that $\gmin\in \CalP^{ac}(D)$, which completes the proof.
\end{proof}
\begin{rmrk} 
\normalfont Theorem \ref{existence2}, although more technical, is designed to capture many practical cases. As it reads, one such case is that of the half-space domain $D = \RR^{d-1}\times [0, \infty)$ together with a potential $V$ of the form
\[V(x) = V(x_n) \leq Cx_n^p,\] 
for $p>0$, which only depends on the final coordinate $x_n$. This is the case commonly considered when modeling a swarm in a gravitational field. Another case is that of an infinite channel $D = B_R^{d-1}\times\RR$ where $B_R^{d-1}$ is a $(d-1)$ dimensional ball of radius $R$. Since the infinite channel is either bounded or translation invariant in each coordinate, a global minimizer exists for $V=0$, consistent with Theorem \ref{tightbound}.
\end{rmrk}
%
%

\section{Numerical Computation of Critical Points}
\label{sect:numerics}

\hspace{5mm} We compute critical points of $\CalE^\nu$ under purely attractive power-law potentials and attractive-repulsive potentials on an interval $D=  [0,L]$, using an iterative method to find fixed points of \eqref{gibbstat}. In all cases, we check that the Euler-Lagrange equation \eqref{ELcond} is satisfied to within the error tolerance of the iterative method. Due to the exponential decay of critical points it can be assumed that for sufficiently large $L$, critical points computed on the interval $D = [0,L]$ are good approximations of critical points on $[0,\infty)$ (when the latter exist). In light of Theorem \ref{domassymthm}, which implies that for $V=0$ no critical points exist on the half-line, computations with $V=0$ should be interpreted as approximations to critical points in free space ($D=\RR$), while computations made with $V\neq 0$ should be interpreted as approximations to critical points on the half line $D = [0,\infty)$.


\subsection{Numerical Method}
\paragraph{Fixed-Point Iterator.} The following scheme computes critical points of $\CalE^\nu$ by discretizing the map $\fxptmap:\CalP(D)\to\CalP^{ac}(D)$ given in \eqref{gibbstat}. Recall that 
%
%
fixed points of $\fxptmap$ are critical points of $\CalE^\nu$ (in particular, the set of fixed points of $\fxptmap$ are exactly the critical points of $\CalE^\nu$ which are absolutely continuous and supported on the whole domain). We use the iterative scheme
\begin{equation}\label{fxptscheme}
\rho^{n+1} = (1-\fxpt_n)\rho^n+\fxpt_n\fxptmap(\rho^n),
\end{equation}
where
\begin{equation}
\fxpt_n =\begin{cases} 1, &\text{if }\CalE^\nu\left[\fxptmap(\rho^n)\right]<\CalE^\nu[\rho^n],\\[2pt] \fxpt_c,&\text{otherwise},\end{cases}
\end{equation}
with inputs $\fxpt_c \in (0,1)$ and $\rho^0\in \CalP^{ac}(D)$.

In words, each iteration produces an absolutely continuous probability measure $\rho^{n+1}$ that is a convex combination of the previous iterate $\rho^n$ and its image under $\fxptmap$, unless the energy of $\fxptmap(\rho^n)$ is lower than that of $\rho^n$, in which case $\rho^{n+1} = \fxptmap(\rho^n)$. Each step requires computation of the integral terms in $\fxptmap(\rho^n)$ and $\CalE^\nu[\rho^n]$, which for $D=[0,L]$ is done by discretizing the interval into $N$ quadrature nodes and numerically integrating. For uniform grids, we use MATLAB's \texttt{conv} function to compute $K*\rho^n$, while for non-uniform grids we use trapezoidal integration. The scheme is terminated when 
\begin{equation}\label{fxptconv}
\nrm{\rho^n-\fxptmap(\rho^n)}_{L^1(D)} < \txt{0.05}{tol} \qquad \text{or} \qquad n>N_{\max},
\end{equation}
where tol and $N_{\max}$ are specified by the user. In what follows, we denote by $\rhofp$ the numerical solution produced by the fixed-point iterator upon convergence.


\paragraph{Stability Constraints.} The scheme \eqref{fxptscheme} has many benefits. It is explicit, so only numerical integration is required at each step, which allows for flexibility of the spatial grid. It is also positivity preserving. Due to the explicit nature, however, there are a few stability constraints. 
\medskip

{\em Oscillations.} The first stability constraint prevents spurious oscillations and can be explained by casting the scheme as a discretization of the following integro-differential equation: assuming $\fxpt_n \ll 1$, \eqref{fxptscheme} can be viewed as a forward-Euler discretization of\\[-10pt]
\begin{equation}\label{diffmap}
\begin{dcases} 
\ppt \rho(x,t) = \fxptmap(\rho(x,t))-\rho(x,t), & (x,t)\in D\times (0,\infty),\\
\hspace{3.75mm}\rho(x,0) = \rho_0(x) \in \CalP^{ac}(D), & x\in D,\\[1pt]
\end{dcases}
\end{equation}
whose steady states are exactly the fixed points of $\fxptmap$. 

For any point $x^*\in D$, the time evolution of $\rho(x^*,t)$ under \eqref{diffmap} is such that $\rho(x^*,t)$ increases when $\rho(x^*,t)<\fxptmap(\rho(x^*,t))$ and decreases when $\rho(x^*,t)>\fxptmap(\rho(x^*,t))$. Analytically, if $\rho_0$ lies in the basin of attraction of some fixed point $\gmin$ of $T$, we expect pointwise convergence $\lim_{t\to \infty} \rho(x^*,t)=\gmin(x^*)$. If $\rho(x^*,t)$ oscillates around $\gmin(x^*)$ as it approaches $\gmin(x^*)$, numerically one can expect spurious growth of such oscillates. Indeed, oscillations do appear in the fixed-point method \eqref{fxptscheme} for ``timesteps'' $\fxpt_c$ that are too large, in which case the iterates $\rho^n$ cycle indefinitely through a finite set of measures.

To arrive at a stable value of $\fxpt_c$ which prevents oscillations, we examine a bound on the $L^1$-Lipschitz constant $L_{\fxptmap}$ of $\fxptmap$ (derived in \cite{messenger2019aggregation} assuming $D$ is bounded, $K$ is bounded below, and $V$ is positive):
%
\begin{equation}\label{lipboundfp}
L_{\fxptmap} \leq\frac{2}{\nu} \nrm{\tilde{K}}_{L^\infty(D-D)} \exp\left(\frac{1}{\nu}\nrm{\tilde{K}}_{L^\infty(D-D)}\right),
\end{equation}
where $\tilde{K}:=K - \min_{x\in D-D}\left\{K(x)\right\}$. Due to the exponential dependence on $\nrm{K}_{\infty}$, \eqref{lipboundfp} may not be a very encouraging bound, but it does suggest that $\fxpt_c$ should be proportional to $\nu$. Indeed, we see convergence of the scheme for $\fxpt_c = \mathcal{O}(\nu)$ and in all computations below  set $\fxpt_c = 5\nu$. Direct dependence of $\fxpt_c$ on $\nrm{K}_{\infty}$ was not observed. 

\medskip

{\em Normalization and Underflow.} Another numerical issue is round-off error. Assuming for the moment that $K$ and $V$ are both positive, when $\nu$ is small the argument of the exponent in $\fxptmap$ is negative and large in magnitude. This results in underflow of digits when calculating $Z(\rho)$ and subsequent division by a small quantity. To avoid this, we exploit the fact that the set of critical points of $\CalE^\nu$ is unchanged by adding a constant to $K$ and at each step normalize the argument of the exponent by adding to $K$ the factor $c_n := - \nu \log Z(\rho^{n-1})$. The potential used in simulations then changes at each iteration and is given by $K_n(x) = K_{n-1}(x)+c_n$ with $K_0 = K$. For $Z(\rho^n)$ we then have
\[Z(\rho^n) = Z(\rho^{n-1}) \int_D \exp\left(\displaystyle -\frac{K_{n-1}*\rho^n(x)+V}{\nu}\right) dx,\]
and so as $\rho^n\to \gmin$ we see that $Z(\rho^n)\to 1$. This normalization turns out to stabilize the problem, and results in the constant on the right-hand side of the Euler-Lagrange equation \eqref{ELcond} conveniently converging to zero, since the true value $\lambda$ is equal to $-\nu\log Z(\gmin)$.

\paragraph{Continuation.} The scaling $\fxpt_c = \mathcal{O}(\nu)$ adopted in light of the bound \eqref{lipboundfp} implies that the number of iterations required for convergence is large for small $\nu$. To prevent this, we use continuation on $\nu$: we let $\left\{\nu_j\right\}_{j=0}^J$ be a decreasing sequence of diffusion parameters and for each $\nu_j$ we set the initial guess $\rho^0_j$ for the fixed-point iterator to the output $\rhofp^{j-1}$ of the fixed-point iterator with $\nu=\nu_{j-1}$. This is very effective for reducing the total iteration count and is also seen in Figure \ref{multistate} to be crucial for revealing non-uniqueness of critical points: different sequences of diffusion parameters (sharing the same final value of $\nu$) can produce different critical points.


\paragraph{Convergence Criteria.} We are primarily concerned with satisfying the Euler-Lagrange equation in its original form,
\[\Lambda(x) := K*\gmin(x) + \nu \log(\gmin(x)) + V(x) = \lambda, \qquad \text{ for all } x\in [0, L],\] 
where $\lambda = \CalE^\nu[\gmin]+\CalK[\gmin]$, and so we check that the quantity
\begin{equation}\label{laminf}
\Lambda_\infty = \nrm{\Lambda - \CalE^\nu[\rhofp] - \CalK[\rhofp]}_\infty
\end{equation}
is below the chosen error tolerance for each numerical solution $\rhofp$. We also check that the boundary condition \eqref{boundaryV} derived in Theorem \ref{domassymthm} is satisfied, which reads
\[\gmin(0)-\gmin(L) = \frac{1}{\nu}\int_0^L V'(x)\gmin(x)\,dx.\]
However, in all numerical experiments we use $V(x) = gx$ and choose $L$ large enough that $\gmin(L)$ is negligible, so this reduces to
\begin{equation}\label{reducedbc}
\gmin(0) = g/\nu\ ,
\end{equation}
which is exact for $D=[0,\infty)$. Thus, we also assess the relative error
\begin{equation}\label{bccheck}
\bcerr := \frac{\left\vert \rhofp(0) - g/\nu\right\vert}{g/\nu}.
\end{equation}
%


\subsection{Purely Attractive Interaction Potential}
\label{subsect:attraction}
The first class of potentials we examine are purely attractive, power-law potentials 
\[K_p(x) := \frac{1}{p}|x|^p,\]
for $p > 0$, where repulsive forces are present in the swarm only in the form of diffusion. Without diffusion, for all $p >0$ the global minimizer is a single $\delta$-aggregation with location determined by $V$. The effect of switching on diffusion is to smooth out the $\delta$-aggregation. Indeed, Figures \ref{kp2}--\ref{kplarge} show critical points which are continuous and unimodal, but are supported on the whole domain with fast-decaying tails.

First we examine the case $p=2$ in detail given the results in Theorem \ref{COM2}, and compare with other small values of $p$. Then we look into the limit of large $p$, which is motivated by the fact that minimizers of $\CalE^\nu$ are supported on the entire domain regardless of the attraction strength (see Theorem \ref{suppmin}).
\begin{rmrk}
\normalfont
For uniformly convex interaction potentials (i.e $K_p$ with $p\geq 2$), it can be shown using displacement convexity as in \cite{mccann1997convexity} and \cite{carrillo2003kinetic} that for $V= gx$ the global minimizer is the unique critical point of $\CalE^\nu$ for $D=[0,\infty)$. Numerics suggest that uniqueness holds for general power-law, purely-attractive $K$ when (i) $D = \RR$ with $V=0$ and (ii) $D = [0,+\infty)$ with $V$ convex and satisfying condition \eqref{Vbound1}. In this way, convexity of $K$ may be relaxed if $K$ remains purely attractive. However, Figure \ref{multistate} below shows that for attractive-repulsive (non-convex) interaction potentials, minimizers are not unique.
\end{rmrk}
\paragraph{Moderate Attraction Strength and Connection to Theorem \ref{COM2}.} The case $p=2$ is used to benchmark the fixed-point iterator. Theorems \ref{COM2} and \ref{existence1} together imply that for every $g>0$ and $\nu>0$ the global minimizer of $\CalE^\nu$ under $K_2$ and $V=gx$ is the unique critical point; moreover, we have an explicit formula for the global minimizer, up to solving equation \eqref{COMroot} (e.g. with MATLAB's \texttt{fzero} command). Figure \ref{kp2} contains computed solutions under $K_2$ for several values of $g$ along with convergence data. Agreement with the exact solution $\gmin_{_{ex}}$ and the Euler-Lagrange equation as measured by $\Lambda_\infty$ and the boundary condition $\bcerr$ are all on the order of the chosen error tolerance of $\sci{1}{6}$. We see especially good agreement with the Euler-Lagrange equation, gaining two orders of accuracy relative to the error tolerance.
\begin{figure}[htp]
\centering
\begin{tabular}{cc}
        \includegraphics[trim={30 0 30 20},clip,width=0.7\textwidth]{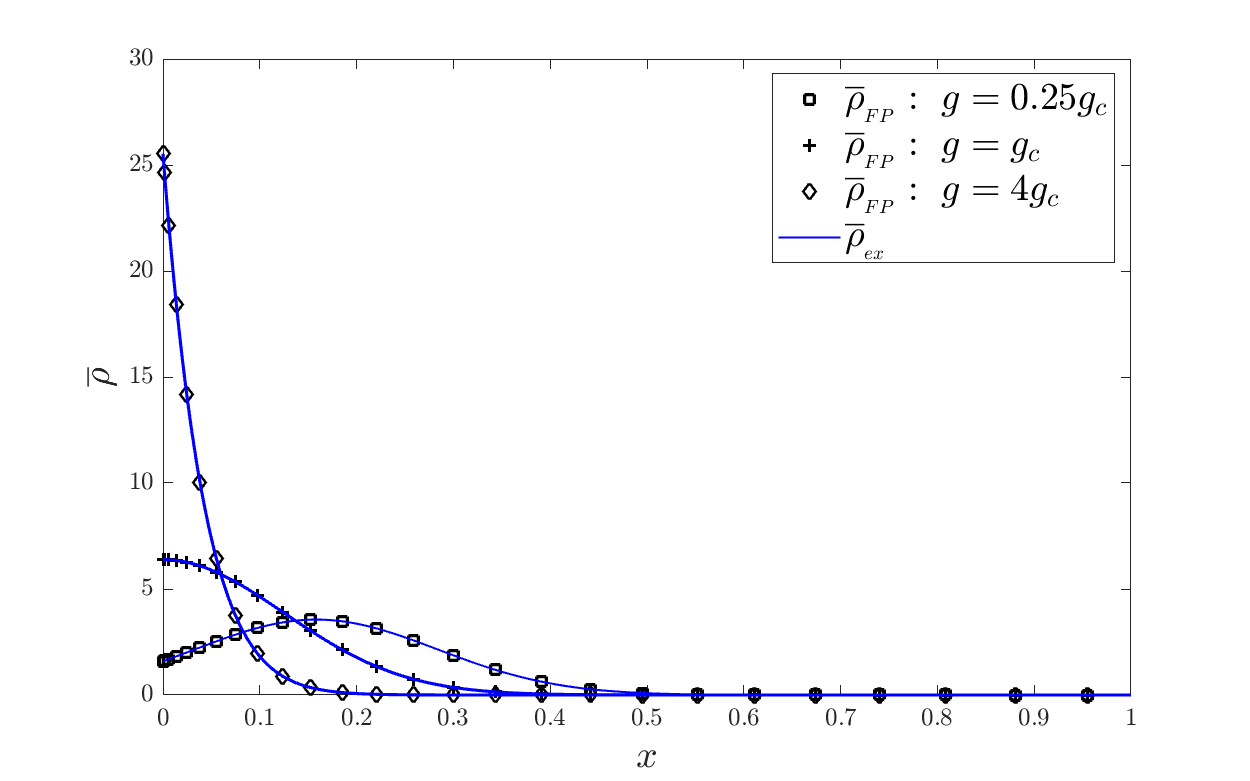} \\
\begin{tabular}{| c | c | c | c | c |}
\hline
$g$ & $\nrm{\rhofp - \gmin_{_{ex}}}_1$ & $\Lambda_\infty$ & $\bcerr$ & Total Iterations \\ \hline
$0.25 g_c$ & $\sci{3.92}{6}$ & $\sci{9.54}{8}$ & $\sci{7.91}{6}$ & 44\\
$g_c$ & $\sci{2.16}{6}$ & $\sci{6.63}{8}$ & $\sci{7.38}{7}$ &  16\\
$4g_c$ & $\sci{8.13}{6}$ &$\sci{3.88}{9}$ & $\sci{7.40}{6}$ &  10 \\ \hline
\end{tabular}\\
\end{tabular}
\caption[Global minimizers of $\CalE^\nu$ computed using fixed-point iteration for $K_p$ with $p=2$, $V(x) = gx$ and $\nu = 2^{-6}$]{Global minimizers under $K_p$ with $p=2$, $V(x) = gx$ and $\nu = 2^{-6} \approx 0.0156$ computed using the fixed-point iterator. The method is initialized at $\rho^0 = 4\ind{[0,0.25]}$ with an error tolerance of $\sci{1}{6}$. A spatial grid of $N=2^{10}$ points is used with points spaced quadratically to resolve the boundary at $x=0$ (not all points are plotted). The value $g_c := \sqrt{2\nu/\pi}$ is emphasized because solutions achieve their maximum at $x=0$ if and only if $g\geq g_c$ (see Remark \ref{rmk:gc}). All three computed solutions converged in well under $N_{\max}=2000$ iterations.}
\label{kp2}
\end{figure}
Figure \ref{kpsmall} shows computed critical points for $p\in(1,8]$ to compare with the case $p=2$, demonstrating that increasing $p$ decreases the maximum height of the solution. We examine the cases $g=0$ and $g=\nu$, the former resulting in critical points which are symmetric about the center of the domain while the latter causes clustering near the domain boundary at $x=0$. Notice that with $g=\nu$, \eqref{reducedbc} implies that $\gmin(0) = 1$, which is clearly represented on the rightmost plot.


\begin{figure}[htp]
\centering
\begin{tabular}{cc}
\includegraphics[trim={10 0 20 20},clip,width=0.48\textwidth]{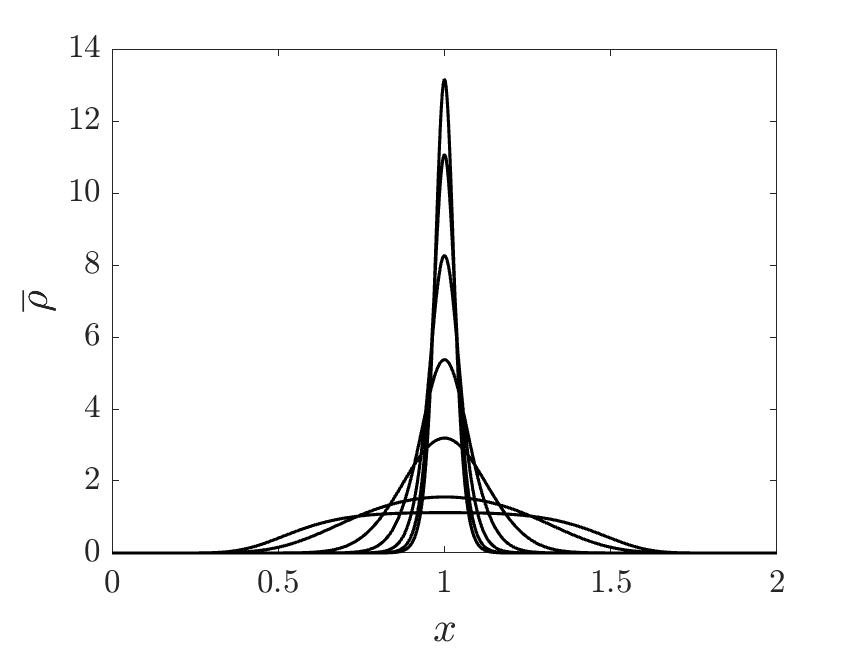} &
\includegraphics[trim={10 0 20 20},clip,width=0.48\textwidth]{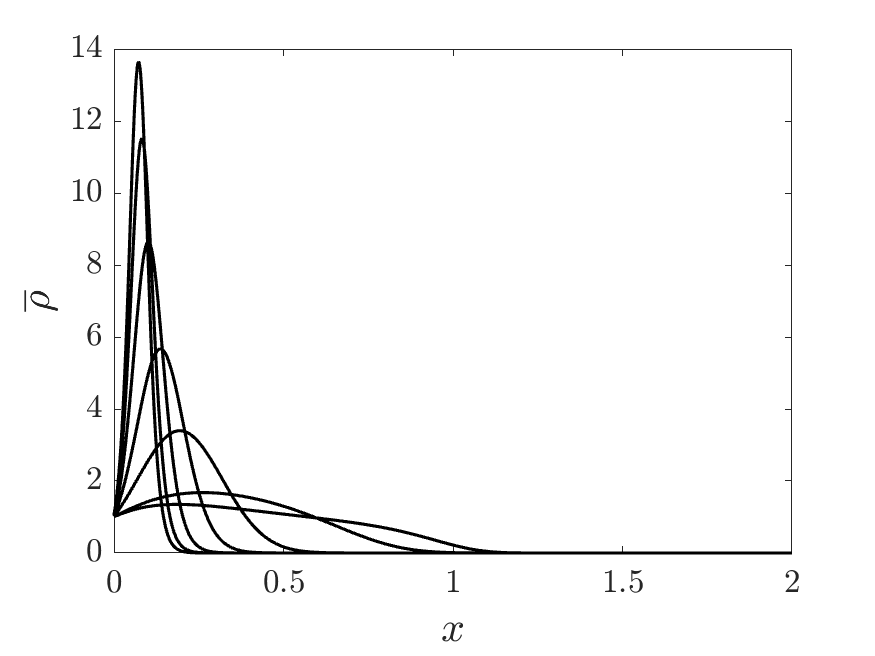} \\
\end{tabular}
\caption{Critical points of $\CalE^\nu$ computed using the fixed-point iterator with $K_p$ for $p\in\left\{1.0625,1.125,1.25,1.5,2,4,8\right\}$, $V(x) = gx$ and $\nu = 2^{-6}$. Left: profiles for $g=0$. Right: profiles for $g=\nu$. As $p$ increases, the maximum height of solutions decreases. The method is initialized at $\rho^0 = 0.5\ind{[0, 2]}$ for $g=0$ and $\rho^0 = \ind{[0, 1]}$ for $g=\nu$. The error tolerance is set to $\sci{1}{6}$ and the maximum iterations set to $N_{\max} = 2000$. A spatial grid of $N=2^{10}$ uniformly spaced points is used. In each case $\Lambda_\infty$ is well below $\sci{1}{6}$; however, the use of uniformly spaced points instead of quadratically spaced (as in Figure \ref{kp2}) has an effect on accuracy at the boundary: $E_0$ remains on the order of $10^{-2}$.}
\label{kpsmall}
\end{figure}


\paragraph{Limit of Large Attraction.} We now examine numerically the limit of large $p$, which is motivated by the fact that minimizers $\critrho$ of $\CalE^\nu$ satisfy $\supp{\critrho} =D$ regardless of how strong the (power-law) attraction is (see Theorem \ref{suppmin}). This is a striking feature because intuitively one might expect that for very large attraction the swarm would be confined to a compact set. Only as $p\to+\infty$, however, do we reach a state with compact support. We derive this family of compactly supported states below in one dimension and compute critical points for powers up to $p=256$ to suggest convergence to the compactly supported states included in Figure \ref{kplarge}.

The limit as $p\to \infty$ is clearly singular, as the limiting interaction potential $K_\infty$ defined by 
\[\lim_{p\to \infty} K_p(x) = K_\infty(x) := \begin{cases} 0, & x\in [-1,1]\\+\infty, & x\notin [-1,1]\end{cases}\]
is no longer locally integrable. As such, the space of probability measures on which the resulting energy is finite is very limited. Despite this, we can still determine minimizers for $\CalE^\nu$ under $K_\infty$. It is not hard to show that the corresponding interaction energy $\CalK_\infty$ satisfies
%
\[\CalK_\infty[\mu] = \begin{dcases} 0, & \txt{0.1}{if}\mu\txt{0.1}{is supported on a unit interval,} \\ +\infty, & \text{otherwise,}\end{dcases}\]
and so the space we should be looking for minimizers in is 
\[\left\{\mu\in \CalP_\infty(D) \txt{0.2}{:} \supp{\mu}\subset [a,1+a]\txt{0.2}{for some} a\in \RR\right\}.\]
 To arrive at this, for the interaction energy we have
\[\CalK_\infty[\mu] = \frac{1}{2}\int_D\int_DK_\infty(x-y)\,d\mu(y)\,d\mu(x) = \frac{1}{2}\int_{\supp{\mu}}
 K_\infty*\mu(x) \,d\mu(x),\]
which is finite if and only if $K_\infty*\mu$ is finite $\mu$-a.e. By computing
\begin{align*}
K_\infty* \mu(x) &= \int_{\supp{\mu}}K_\infty(x-y)\,d\mu(y)\\
&= \int_{\supp{\mu}\cap [x-1,x+1]^c }K_\infty(x-y)\,d\mu(y)\\
&= \begin{cases} 0, &\mu([x-1,x+1]^c)=0\\ +\infty, & \text{otherwise},\end{cases}
\end{align*}
we see that $\CalK_\infty[\mu] = +\infty$ unless $\mu([x-1,x+1]^c)=0$ for $\mu$-a.e. $x\in D$, which is equivalent to $\mu$ having support on a unit interval. From this we deduce that a minimizer $\critrho_\infty$ has support on a unit interval and satisfies $K_\infty* \critrho_\infty(x) = 0$ for $\critrho_\infty$-a.e.\ $x$. Hence the Euler-Lagrange equation reads
\[\nu\log(\critrho_\infty(x))+V(x) = \lambda, \qquad \critrho_\infty\text{-a.e. }x\in D,\]
or, taking $\supp{\critrho_\infty} = [0,1]$,
\begin{equation}\label{pinf}
\gmin_\infty = \begin{dcases} \ind{[0,1]} & \txt{0.2}{for} V=0 \\[5pt] Z^{-1} e^{-V/\nu}\ind{[0,1]} & \txt{0.2}{for} V\neq 0.\end{dcases}
\end{equation}

Figure \ref{kplarge} shows critical points for $K_p$ and $V=gx$ for larger values of $p$  together with the corresponding limiting measure $\critrho_\infty$ derived above. For $g=0$, as $p$ increases we see solutions increasing to $\critrho_\infty$ inside $[0.5,1.5]$ and decreasing to zero elsewhere. For $g=\nu$ the boundary condition \eqref{reducedbc} again reduces to $\gmin(0)=1$, which is satisfied through increasingly sharp transitions as $p$ increases, and is not satisfied in the limit by $\critrho_\infty$. We still see $\Lambda_\infty$ values near the error tolerance, except for $p=256$, where the method clearly breaks down, as the scheme converges in fewer than $N_{\max}$ iterations yet $\Lambda_\infty$ is $\mathcal{O}(1)$.

\begin{figure}[htp]
\centering
\begin{tabular}{cc}
\includegraphics[trim={0 0 20 20},clip,width=0.48\textwidth]{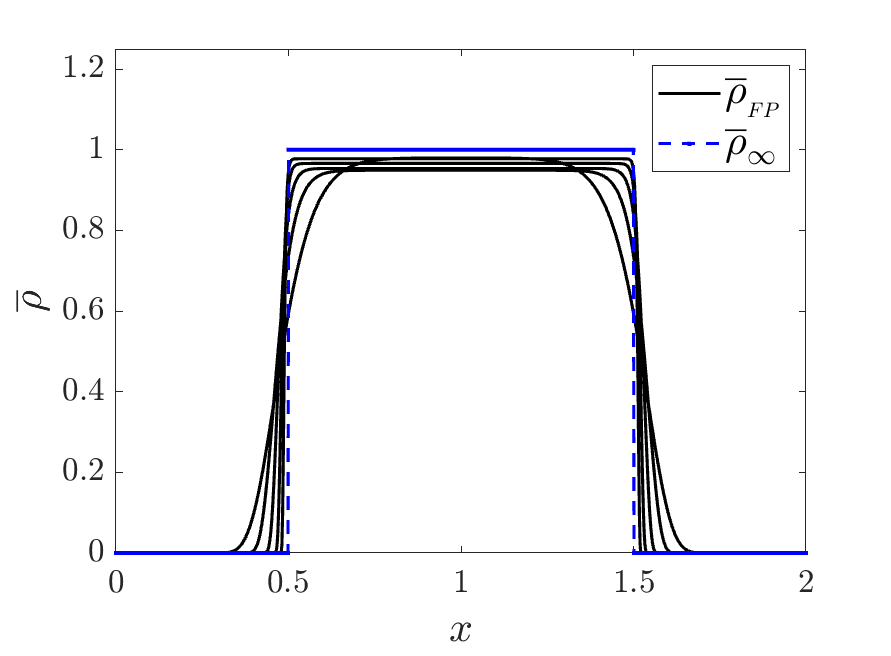} &
\includegraphics[trim={0 0 20 20},clip,width=0.48\textwidth]{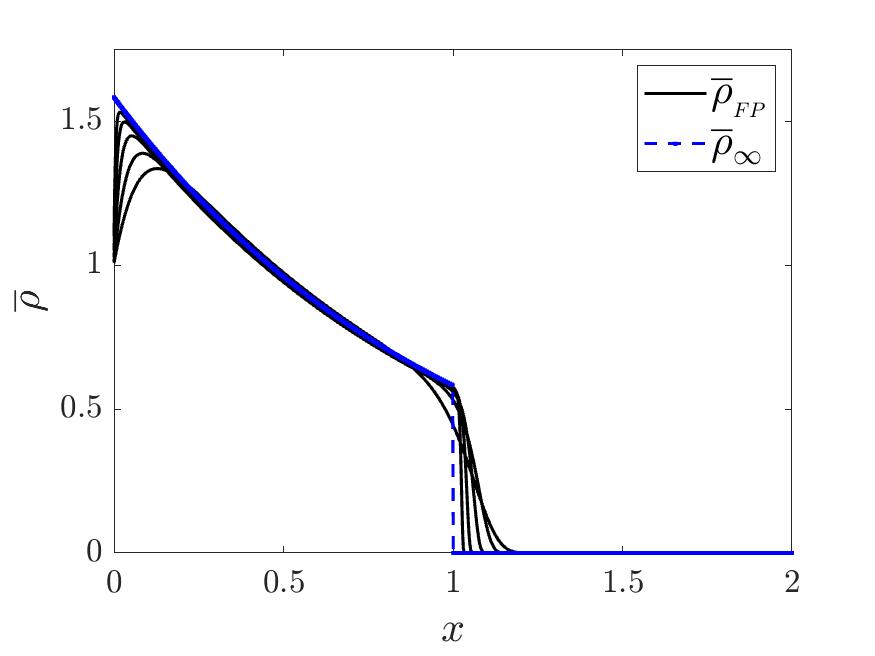} \\
\begin{tabular}{| c | c | c |}
\hline
$p$ & $\Lambda_\infty$ & Iterations \\ \hline
$16$ & $\sci{1.92}{6}$ & 64\\
$32$ & $\sci{3.84}{6}$ &  131\\
$64$ & $\sci{8.50}{6}$ &  168 \\
$128$ & $\sci{1.57}{5}$ & 186\\
$256$ & $\sci{2.33}{1}$ &  180\\ \hline
\end{tabular}
&
\begin{tabular}{| c | c | c | c |}
\hline
$p$ & $\Lambda_\infty$ & $E_0$ & Iterations \\ \hline
$16$ & $\sci{5.11}{6}$ & $\sci{7.16}{3}$ & 16\\
$32$ & $\sci{6.86}{6}$ & $\sci{1.29}{2}$ &  116\\
$64$ & $\sci{1.30}{5}$ & $\sci{2.55}{2}$ &  115 \\
$128$ & $\sci{2.70}{5}$ & $\sci{5.10}{2}$ & 136\\
$256$ & $\sci{2.48}{1}$ &  $\sci{9.97}{2}$ &  140\\ \hline
\end{tabular}
\end{tabular}
\caption[Critical points of $\CalE^\nu$ computed using fixed-point iteration for $K_p$ with $p =2^{k}$, $k=4, \dots, 8$, $V(x) = gx$ and $\nu = 2^{-6}$]{Critical points of $\CalE^\nu$ computed using the fixed-point iterator with $K_p$ for $p =2^{k}$, $k=4, \dots, 8$, $V(x) = gx$ and $\nu = 2^{-6}$. Left: profiles for $g=0$. Right: profiles for $g=\nu$. As $p$ increases, $\rhofp$ drops off sharply outside an interval of length 1, approaching the compactly supported state $\gmin_\infty$ defined in \eqref{pinf}. Parameters for the fixed-point iterator are the same as in Figure \ref{kpsmall}.}
\label{kplarge}
\end{figure}



\subsection{Non-Uniqueness under Attractive-Repulsive Potentials}
\label{subsect:att-rep}
The second class of interaction potentials we consider involve attraction at large distances and repulsion at short distances. So-called attractive-repulsive potentials have been the subject of a substantial amount of research in recent years (see \cite{balague2013dimensionality,bernoff2011primer,evers2016metastable,fellner2010stable,fetecau2017swarm,fetecau2017swarming,mogilner2003mutual})  for their use in modelling biological swarms, which predominantly seem to obey the following basic rules: if two individuals are too close, increase their distance, if too far away, decrease their distance. We show here through a numerical example that for such potentials, uniqueness of critical points does not hold in general.

We examine a regularization of the potential 
\[K_{QANR}(x) = \frac{1}{2}|x|^2+2\phi(x),\]
which features quadratic attraction and Newtonian repulsion given by the free-space Green's function $\phi(x) = -\frac{1}{2}|x|$ for the negative Laplacian $-\Delta$ in one dimension.
%
Specifically, we consider the $C^1$ regularized versions of $K_{QANR}$ in the form of the one-parameter family
\begin{equation}
K_\epsilon(x) := \frac{1}{2}x^2 + 2\phi_\epsilon(x):= \frac{1}{2}x^2 + \begin{dcases} -|x|, & \qquad |x|>\epsilon, \\ -\frac{\epsilon}{2}- \frac{1}{2\epsilon}x^2, &\qquad  |x|\leq \epsilon,\end{dcases}
\end{equation}
for $\epsilon\in (0, 1]$.

One might expect that for each $\epsilon$, switching on diffusion selects a unique number of aggregates in all minimizing states. Similar results have been documented: Evers and Kolokolnikov establish in \cite{evers2016metastable} that adding any level of diffusion to an equilibrium consisting of two aggregates of unequal mass for the plain aggregation model under the double-well potential $K(x) = -\frac{1}{2}x^2+\frac{1}{4}x^4$, causes the state to become metastable, where mass is transferred between the two aggregates until their masses equilibrate, which only happens in infinite time. As evidenced by the numerical example in Figure \ref{multistate}, where a four-aggregate and a five-aggregate state both exist as critical points for the same $\epsilon$ and $\nu$ values, it seems that diffusion does not guarantee a unique number of aggregates. It is clear that the four-aggregate state is preferred, as it has lower energy and requires fewer iterations of the fixed-point iterator. 

Using continuation on the diffusion parameter, as in Figure \ref{multistate}, suggests a method for computing the globally-minimizing configuration for each $\epsilon$. Both the four-aggregate and five-aggregate state are computed with final diffusion $\nu = 2^{-13}$, but the four-aggregate state is reached using continuation from initial diffusion $\nu_0 = 10\nu$, whereas the five-aggregate state uses $\nu_0= 2\nu$. The more energy-favourable state is reached from a larger starting $\nu_0$, which suggests that continuation from larger diffusion might be a mechanism for extracting the global minimizer. Ice crystallization provides a physical analogy: more imperfections form in ice crystals when water is frozen abruptly, indicating a non-energy-minimizing configuration, than when water is frozen slowly (see for instance \cite{kono2017effects}).
%
%
\begin{figure}[htp]
\centering
\begin{tabular}{c}
\includegraphics[trim={70 0 70 20},clip,width=0.8\textwidth]{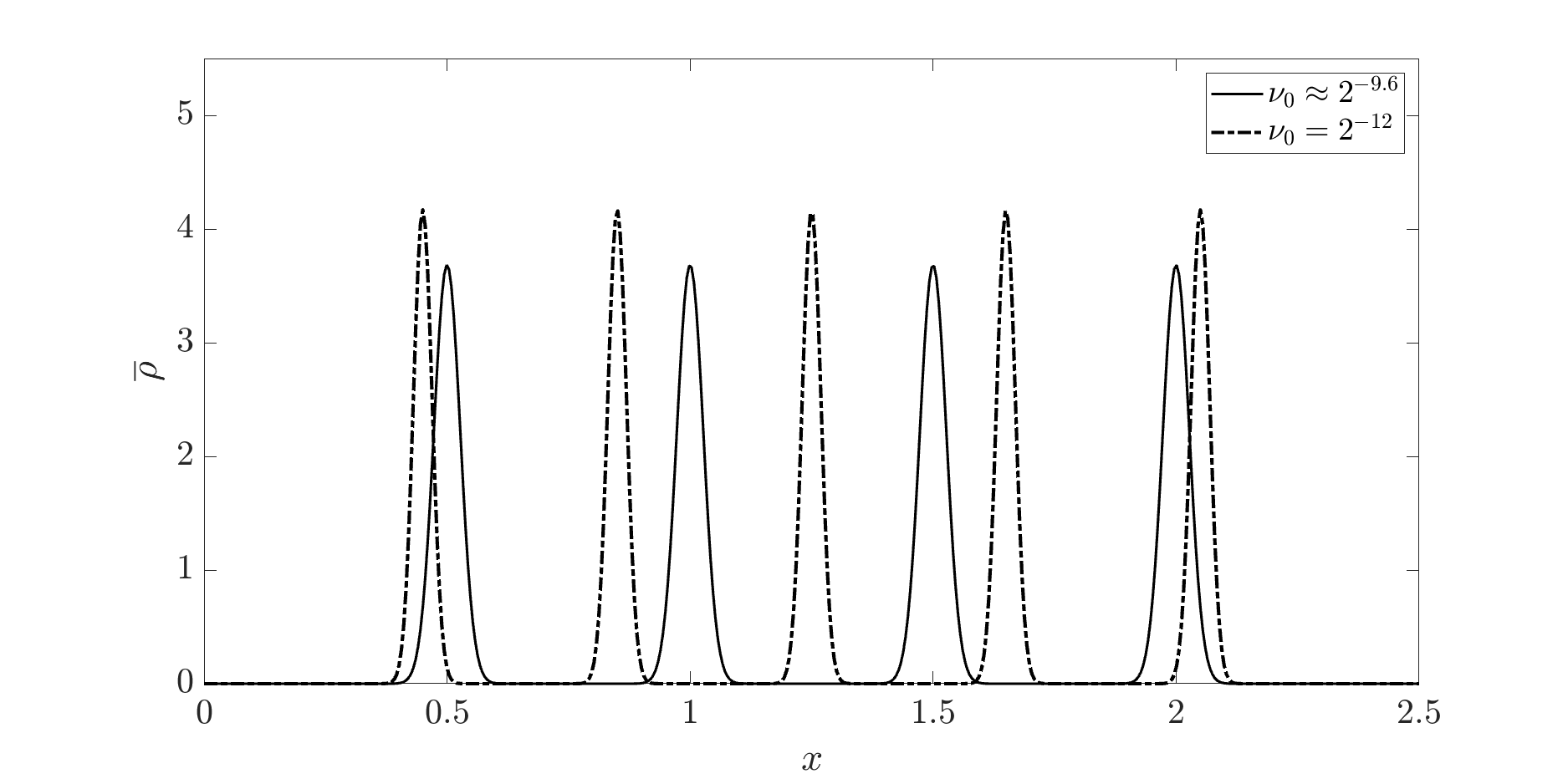} \\
\begin{tabular}{| c | c | c | c |}
\hline
$\nu_0$ & $\CalE^\nu$ & $\Lambda_\infty$ & Total Iterations \\ \hline
$10\nu$ & $-0.74841$ & $\sci{5.73}{7}$ & 26\\
$2\nu$ & $-0.74826$ & $\sci{1.19}{6}$ &  1149\\ \hline
\end{tabular}\\
\end{tabular}
\caption[Non-uniqueness of critical points for attractive-repulsive potentials]{Multiple critical points for $K_\epsilon$ with $\epsilon = 0.3$ and $\nu = 2^{-13}$. In each case, the fixed-point iterator was initiated at $\rho_0 = \frac{1}{L}\ind{[0,L]}$ and continuation was employed on the diffusion parameter. With initial diffusion $\nu_0 = 10\nu$, we arrive at a four-aggregate state, while for $\nu_0 = 2\nu$, a five-aggregate state emerges which has higher energy, higher $\Lambda_\infty$, and requires many more iterations of the fixed-point iterator, suggesting less stability.}
\label{multistate}
\end{figure}


\bigskip

{\large \bf Acknowledgements }  D.M. would like to thank his co-supervisor at Simon Fraser University, Ralf Wittenberg, for invaluable guidance during the writing of his Master's thesis. Ralf's thorough review of D.M.'s thesis directly led to a much better exposition of the material presented in this paper. The authors would also like to acknowledge Wittenberg for his insight during conversations related to the present article, and in particular, suggestions which led to the idea of the effective volume dimension for connecting diffusion-dominated spreading to domain geometry.  R.F. was supported by NSERC Discovery Grant PIN-341834 during this research.

\bibliographystyle{plain}
\bibliography{MeFe2019}

\end{document}